\documentclass[a4paper,draft,12pt,twoside]{article}

\tolerance=1000
\hbadness=10000

\usepackage{amssymb,amsmath,amscd,amsfonts}
\usepackage{latexsym}
\usepackage{amscd}
\usepackage[all]{xy}
\usepackage{amsthm}
\usepackage{a4}
\usepackage{verbatim}

\newtheorem{prop}{Proposition}[section]
\newtheorem{theorem}[prop]{Theorem}
\newtheorem{lemma}[prop]{Lemma}
\newtheorem{cor}[prop]{Corollary}

\newtheorem{defn}[prop]{Definition}
\theoremstyle{definition}

\newtheorem{exa}[prop]{Examples}
\theoremstyle{remark}

\newcommand{\Hh}{\mathcal{H}}

\newcommand{\Ee}{\mathcal{E}}
\newcommand{\Ll}{\mathcal{L}}

\newcommand{\subs}{\subseteq}

\setcounter{secnumdepth}{2}
\setcounter{section}{0}
\numberwithin{equation}{section}
\pagestyle{plain}
\setlength{\oddsidemargin}{10mm}
\setlength{\evensidemargin}{10mm}
\setlength{\textwidth}{150mm}
\setlength{\textheight}{245mm}
\setlength{\topmargin}{-10mm}

\title{Multiply generated dynamical systems and the duality of higher rank graph algebras}
\author{Ionel Popescu \\
{\small  School of Mathematics, Georgia Institute of Technology, Atlanta, GA 30332-0160}\\
{\small Institute of Mathematics "Simion Stoilow", 21, Calea Grivitei Street}\\
{\small 010702-Bucharest, Sector 1, ROMANIA}\\
{\small ipopescu@ math.gatech.edu}\\
Iulian Popescu\\
{\small Institute of Mathematics "Simion Stoilow", 21, Calea Grivitei Street}\\
{\small 010702-Bucharest, Sector 1, ROMANIA}\\
{\small iulian.popescu@imar.ro}}
\date{}

\begin{document}
\maketitle
\footnote{Supportted by the grant 2-CEx06-11-34/2006}

\begin{abstract}
We define a semidirect product groupoid of a system of partially defined local homeomorphisms $T=(T_{1},\ldots, T_{r})$. We prove that this construction gives rise to amenable groupoids. The associated algebra  is a Cuntz-like algebra. We use this construction for higher rank graph algebras in order to give a topological interpretation for the duality in $E$-theory between $C^{*}(\Lambda)$ and $C^{*}(\Lambda^{op})$.
\end{abstract}

\section*{Introduction}
Toeplitz algebras have been used to define extensions of $C^{*}$-algebras. The beginning of this paper is in \cite{pz} where a Toeplitz algebra was the main tool in constructing a $K$-homology class for higher rank graph algebras. Let $(\Lambda,\sigma)$ be a higher rank graph with shape $\sigma$ (see \cite{kupa}), $\Lambda^{*}$ the set of morphism of nonzero shape and $\overline{\Lambda}=\{\Omega\}\cup\Lambda^{*}$ where $\Omega$ is a symbol (the vacuum morphism) which does not belong to $\Lambda^{*}$. We define left and right creations on the Fock space $\mathsf{F}_{\Lambda} = \mathsf{F}=l^{2}(\overline{\Lambda})$:
$$
L_{\lambda}\delta_{\mu}=\left\{ \begin{array}{ll}
\delta_{\lambda\mu}& \textrm{if } s(\lambda)=t(\mu)\\
0 & \textrm{otherwise,}
\end{array} \right.
$$
$$
R_{\lambda}\delta_{\mu}=\left\{ \begin{array}{ll}
\delta_{\mu\lambda}& \textrm{if } s(\mu)=t(\lambda)\\
0 & \textrm {otherwise.}
\end{array} \right.
$$
$$
L_{\lambda}\Omega=R_{\lambda}\Omega=\delta_{\lambda}
$$
The left sided Toeplitz algebra is $\mathcal{E}_{l}=C^{*}(L_{\lambda}; \lambda \in \Lambda^{*})$ the right sided Toeplitz algebra is $\mathcal{E}_{r}=C^{*}(R_{\lambda}; \lambda \in \Lambda^{*})$ and the two sided Toeplitz algebra is $\mathcal{E}=C^{*}(L_{\lambda}, R_{\lambda}; \lambda \in \Lambda^{*})$. These algebras are well known in the rank one case since they give rise to short exact sequences. For example, one can use $\mathcal{E}$ to define
$$0\rightarrow \mathcal{K}\rightarrow \mathcal{E}\rightarrow O_{E}\otimes O_{E^{op}}\rightarrow 0$$
where $E$ is an ordinary oriented graph with certain conditions which give the uniqueness of the algebras $O_{E}$ and $O_{E^{op}}$, $E^{op}$ is the graph obtain by reversing the arrows. In the higher rank case the algebra $\mathcal{E}$ has a more complicated ideal structure. It is this fact which gives a longer exact sequence. It would give a $KK$-class if the sequence was semisplit \cite{pz,ze}. This would follow immediately from the nuclearity of $\mathcal{E}$. We tried to give a more conceptual construction of this algebra as a groupoid algebra of an amenable groupoid but we have not been able to do it. However, the construction of the semidirect product appearing in \cite{re2} can be generalized to give a groupoid description of one-sided Toeplitz algebras of a higher rank graph. We believe that this construction can be generalized to actions of other semigroups. For example actions of cones in $\mathbb{R}^{n}$ should give algebras generated by Wiener-Hopf operators (see \cite{mr, nica}).

The organization of this paper is as follows. In the first section we give the main construction of this paper, the semidirect product of an action of $\mathbb{N}^{r}$ by partial local homeomorphism which we also call a multiply generated dynamical system (MGDS). Such an action is obtained by choosing $r$ commuting partially defined local homeomorphisms. The main examples come from shifts on finite, semifinite and infinite paths of a higher rank graph. There are two groupoids associated to such an action, the groupoid of germs of the pseudogroup generated by these local homeomorphism and the semidirect product groupoid. We think that these groupoids have been around in the study of Toeplitz algebras of higher rank graphs as well as in the general theory of crossed products by partial actions (see for example \cite{er, fmy}). The semidirect product can be defined only if we have a condition on the domains of the partial maps. Otherwise it may not even give rise to an algebraic groupoid. This condition is fullfiled for one sided Toeplitz algebras associated to higher rank graphs. Following the lines in \cite{re2}, we prove that these two groupoids are isomorphic if and only if the dynamical system is essentially free. In the general situation we can prove that the natural map from the semidirect product to the groupoid of germs induces a map of corresponding reduced $C^{*}$-algebras. Next we prove the amenability of the semidirect product. This is done by decomposing the action in subaction and then applying a result on the amenability of extensions.

In the second section we remind several facts about duality in a bivariant theory. We give the definition of Spanier-Whitehead duality and a condition which is often taken as definition in literature. We remind then a way to construct exact sequences starting from an algebra and a tuple of ideals. We show briefly how we can improve the results in \cite{pz} for any locally finite higher rank graph with finite set of objects, regardless the uniqueness of the generating relation of our graph algebras as for examples the graph $\mathbb{N}^{r}$. We obtain a duality for the universal $C^{*}$-algebras $C^{*}(\Lambda)$ and $C^{*}(\Lambda^{op})$..

In the last section we give a groupoid approach to the duality of higher rank graph algebras.
The $K$-theory fundamental class is given by $r$ partial unitaries which can be best described in the groupoid picture as two-sided shifts. Even if we do not have a conceptual groupoid approach to the two-sided Toeplitz algebra, we can define it as a groupoid of germs. The main problem in understanding better this groupoid is the unit space $X$ which is given by the spectrum of the diagonal algebra $\mathcal{E}\cap l^{\infty}(\overline{\Lambda})$. The isometries $L_{\lambda}^{*}R_{\mu}$ with $\sigma(\lambda)=\sigma(\mu)=e_{j}$ are not given by something like shift equivalence. However, the partial isometries can be viewed as partial homeomorphisms using Gelfand duality. The space $X$ has three properties inherited from the graph $\Lambda$:
\begin{enumerate}
 \item  
the shape $\sigma$ extends to a map from $X$ to $\overline{\mathbb{N}}^{r}$
\item
the multiplication $\lambda\mu\nu$ extends to a multiplication $\lambda x\mu$
\item
the unique factorization $\alpha=\lambda\alpha'\mu$ where $\sigma(\alpha)\geq \sigma(\lambda)+\sigma(\mu)$ extends to a unique factorization $x=\lambda x'\mu$ where $\sigma(x)\geq \sigma(\lambda)+\sigma(\mu)$.
\end{enumerate}
These properties are enough to define a MGDS given by shifts on a subspace  of $X\times \Lambda^{\infty}$ together with an equivariant map to the MGDS $\overline{\mathbb{N}}^{r}\times \Lambda^{\infty}$ given again by shifts. This map induces a map of the semidirect product groupoids which in turn induces a morphism between the algebras $\mathcal{T}^{\otimes r}\otimes C^{*}(\Lambda)$ and $\mathcal{E}\otimes C^{*}(\Lambda)$. This morphism is the crucial step in the proof of the duality.

Finally, we want to draw the attention to the results in \cite{em1,em2}. Our groupoid approach may give a hint to what a higher rank hyperbolic group would be.

\section{Cuntz-like algebras associated with multiply generated dynamical systems}   \label{cuntzlike}
Two partial maps on a set $X$, $L:dom(L)\to ran(L)$ and $R:dom(R)\to ran(R)$ can be composed if one defines
$$\textup{dom}(LR)=\{x\in \textup{dom}(R); R(x)\in \textup{dom}(L)\}$$
and $LR(x)=L(R(x)$ for any $x\in dom(LR)$. We say that two partial maps $L$ and $R$ on a set $X$
commute if $dom(LR)=dom(RL)$ and $LR=RL$ on $dom(LR)$.
We set $L^{0}=id$ for any partial map on $X$. If $L$ is injective,
we denote by $L^{-1}$ the partial map
$L^{-1}:ran(L)\to dom(L)$. A local homeomorphism is a map $\phi:X\rightarrow Y$ with the property that each point $x$ has a neighborhood $U$ such that $\phi \vert_{U}:U\rightarrow \phi(U)$ is a homeomorphism. 

A partial homeomorphism on $X$ is a local homeomorphism $S$ from an open set $dom(S)$ of $X$ onto an open set $ran(S)$ of $X$. A set $\mathcal{G}$ of partial homeomorphisms of $X$ which is closed under composition, inversion and containing the identity is called a pseudogroup (\cite{azr}). For any set $S$ of partial homeomorphisms on $X$, there exists the smallest pseudogroup $[S]$ generated by $S$. The semi-direct product groupoid $X\rtimes \mathcal{G}$ of a pseudogroup $\mathcal{G}$ is the set of triples $(x,S,y)$ where $S\in \mathcal{G}, y\in dom(S), x=S(y)$ with the obvious operations 
$$(x,S,y)(y,T,z)=(x,ST,y), (x,S,y)^{-1}=(y,S^{-1},x).$$
The topology is given by the product topology of $X$ and $\mathcal{G}$ where $\mathcal{G}$ has the discrete topology. The groupoid of germs is a quotion of the semidirect product groupoid by the equivalence relation $(x_{1},S_{1},y_{1}) \sim (x_{2},S_{2},y_{2})$ if and only if $y_{1}=y_{2}$ and $S_{1}=S_{2}$ on a neighborhood of $y_{1}$. The topology is the quotient topology. These two groupoids are r-discrete, that is the range and source maps are local homeomorphisms.

In the monoid $\mathbb{N}^{r}$ we write $n=(n_{1},\ldots, n_{r})$ with $n_{j}\in \mathbb{N}$ and $e_{k}$ the coordinates $(0,\ldots,1,\ldots,0)$.
\begin{defn}(Conform \cite{re2})
A multiply generated dynamical system (MGDS) is a pair $(X,T)$ where $X$ is a topological space and  $T=(T_{1},\ldots T_{r})$
a system of $r$ commuting partial homeomorphisms on $X$.
\begin{exa} \label{example}
\begin{enumerate}
\item (\cite{fmy}, Definition 5.1) For $m\in\overline{\mathbb{N}}^{r}$ let $\mathbb{N}^{r}_{m}$ be the higher rank graph $\{(n,n')\in \mathbb{N}^{r}\times\mathbb{N}^{r}:n\leq n'\leq m\}$ where $s(n,k)=k$, $t(n,k)=n$, $\sigma(n,k)=k-n$ and the composition is given by $(n,k)(k,p)=(n,p)$. 
Let $\Lambda$ be a finitely aligned rank $r$ graph and 
$$X_{\Lambda}=\{x:\mathbb{N}^{r}_{m}\to \Lambda; m\in \overline{\mathbb{N}}^{r}_{m}\}$$
 the space of finite, semifinite and infinite paths. The shape $\sigma$ can be extended to $X_{\Lambda}$, $\sigma(x)=m$ where $x$ is defined on $\mathbb{N}^{r}_{m}$.
For $r=2$ an element in $X_{\Lambda}$ can be seen graphically as one of the following:
$$
\xy
 {\ar (20,0)*{}; (0,0)*{}};
 {\ar (20,20)*{}; (20,0)*{}};
 {\ar (20,20)*{}; (0,20)*{}};
 {\ar (0,20)*{}; (0,0)*{}};
 {\ar (50,0)*{}; (30,0)*{}};
 {\ar (50,30)*{}; (50,0)*{}};
 {\ar (30,30)*{}; (30,0)*{}};
 {\ar (90,0)*{}; (60,0)*{}};
 {\ar (60,20)*{}; (60,0)*{}};
 {\ar (90,20)*{}; (60,20)*{}};
 {\ar (130,0)*{}; (100,0)*{}};
 {\ar (100,30)*{}; (100,0)*{}};
\endxy
$$
A basis of a topology on $X_{\Lambda}$ is given by the sets $\{x:x(0,k)=\lambda,k\leq\sigma(x)\leq m\}$ where $k\in\mathbb{N}^{r}_{m}$, $\lambda\in \Lambda$ and $m\in \overline{\mathbb{N}}^{r}_{m}$.
 Then the partial homeomorphisms $T_{k}$ with $dom(T_{k})=\{x\in X_{\Lambda}:\sigma(x)\geq k\}$, $T_{k}(x):\mathbb{N}^{r}_{\sigma(x)-k}\to \Lambda$, $T_{k}(n,n')=x(n+k,n'+k)$ give a MGDS.
\item
In the example above the restriction to boundary paths $\partial X$ gives a subsystem. If $\Lambda$ has no sources then $\partial X=\Lambda^{\infty}=\{x: \sigma(x)=(\infty\ldots\infty)\}$ (see \cite{fmy} Definition 5.10 and \cite{kupa})
\item
Let $X$ be the free monoid on $r$ letters $a_1,\ldots,a_r$, that is the set of words $a_{i_1}\ldots a_{i_k}$. Put on $X$ the discrete topology and define $T=(L,R)$ the translations on the left and on the right: $dom(L)=dom(R)=\{a_{i_1}\ldots a_{i_k}$: $k\geq 1\}$ (the set of nonvoid words), $L(a_{i_1}a_{i_2}\ldots a_{i_k})=a_{i_2}\ldots a_{i_k}$, $R(a_{i_1}\ldots a_{i_{k-1}}a_{i_k})=a_{i_1}\ldots a_{i_{k-1}}$. For instance, if $r=1$ then $X=\mathbb{N}$ and $dom(L)=dom(R)=\mathbb{N}\setminus\{0\}$, $L(k)=R(k)=k-1$. It is clear that $L$ and $R$ commute.
\end{enumerate}
\end{exa}
\end{defn}
We denote by $\mathcal{G}(X,T)$ the full pseudogroup generated by the restrictions of $T_{j}\vert_{U}$
where $U$ is an open subset of $X$ on which $T_{j}$ is injective.
Because of the commutation conditions on $T$,
we can define $T^{n}=T_{1}^{n_{1}}T_{2}^{n_{2}}\cdots T_{k}^{n_{r}}$ for $n\in \mathbb{N}^{k}$. One can see a MGDS as an action of the semigroup $\mathbb{N}^{r}$ on $X$ by partial local homeomorphisms. We write sometimes $T_{n}$ instead of $T^{n}$ when we want to view $T$ as a semigroup. We need a technical condition on the domains of $T^{n}$ (domain condition):
$$\textrm{(DC) \;\;\;dom}(T^{n})\cap \textrm{dom}(T^{m})\subset \textrm{dom}(T^{n\vee m})$$
where ${n\vee m}$ is the componentwise maximum. The following lemma is basically Lemma 2.4 from \cite{re2} for MGDS with (DC).
\begin{lemma} \label{lematogerms}
Let $(X,T)$ be a MGDS with (DC)\\
(i) A partial homeomorphism $S$ belongs to $\mathcal{G}(X,T)$ if and only if it is locally of the form $(T_{|U}^{m})^{-1}T_{|V}^{n}$ where
$m,n\in \mathbb{N}^{r}$, $U$ is an open set on which $T^{m}$ is injective and $V$ is an open set on which $T^{n}$ is injective.\\
(ii)Let $a\in X$. Suppose that $(T_{|U}^{m})^{-1}T_{|V}^{n}$ and $(T_{|W}^{p})^{-1}T_{|Y}^{q}$ are two partial homeomorphisms as in (i)
having $a$ in their domains and $(T_{|U}^{m})^{-1}T_{|V}^{n}a=(T_{|W}^{p})^{-1}T_{|Y}^{q}a$. If $m-n=p-q$, then $(T_{|U}^{m})^{-1}T_{|V}^{n}$ and $(T_{|W}^{p})^{-1}T_{|Y}^{q}$
have the same germ at $a$.
\end{lemma}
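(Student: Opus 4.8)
The plan is to run both parts off a single rewriting identity: commutativity of the $T_j$ together with (DC) lets one turn any word $T^{n}(T^{p})^{-1}$ into a word of the normal form $(T^{m'})^{-1}T^{n'}$, and this identity drives the pseudogroup closure in (i) and the germ comparison in (ii). For (i) the implication ``locally of the stated form $\Rightarrow$ in $\mathcal{G}(X,T)$'' is the easy direction: writing $T^{n}=T_{1}^{n_{1}}\cdots T_{r}^{n_{r}}$ exhibits $T^{n}_{|V}$ as a composition of restrictions of the generators, so $T^{n}_{|V}$ and $(T^{m}_{|U})^{-1}$ lie in $\mathcal{G}(X,T)$, and since a pseudogroup is closed under composition and under gluing of compatible local sections, any $S$ locally of this form lies in $\mathcal{G}(X,T)$. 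For the converse I would let $\mathcal{S}$ be the set of partial homeomorphisms that are locally of the form $(T^{m}_{|U})^{-1}T^{n}_{|V}$ and prove that $\mathcal{S}$ is itself a pseudogroup containing every generator $T_{j}|_{U}$ (take $m=0$, $n=e_{j}$); minimality of $\mathcal{G}(X,T)$ then forces $\mathcal{G}(X,T)\subseteq\mathcal{S}$. Closure of $\mathcal{S}$ under restriction, gluing and inversion is formal (the inverse of $(T^{m}_{|U})^{-1}T^{n}_{|V}$ is $(T^{n}_{|V})^{-1}T^{m}_{|U}$), so the whole content is closure under composition.

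For composition, consider $S_{1}S_{2}=(T^{m})^{-1}T^{n}(T^{p})^{-1}T^{q}$ locally, and focus on the middle word $T^{n}(T^{p})^{-1}$. Set $k=n\vee p$. I claim $T^{n}(T^{p})^{-1}=(T^{k-n})^{-1}T^{k-p}$ on a suitable open set: if $y=T^{p}(w)$ then $T^{k-n}(T^{n}w)=T^{k}w=T^{k-p}(T^{p}w)=T^{k-p}y$, where the equality $T^{k}w=T^{k-p}(T^{p}w)$ needs $w\in\tu{dom}(T^{k})$, which by (DC) follows from $w\in\tu{dom}(T^{n})\cap\tu{dom}(T^{p})$. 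Substituting and using $(T^{m})^{-1}(T^{k-n})^{-1}=(T^{(k-n)+m})^{-1}$ gives $S_{1}S_{2}=(T^{(k-n)+m})^{-1}T^{(k-p)+q}$ locally, which is of the required form once one shrinks to an open set on which the two powers are injective (possible since each $T^{j}$ is a local homeomorphism, hence locally injective). This is the technical heart of (i), and (DC) is precisely what keeps the middle rewriting valid rather than vacuous.

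For (ii), set $b=S_{1}a=S_{2}a$ and $k=m\vee p$. From $b\in\tu{dom}(T^{m})\cap\tu{dom}(T^{p})$ and (DC) we get $b\in\tu{dom}(T^{k})$; since $T^{k}$ is a composition of local homeomorphisms it is a local homeomorphism on its open domain, hence injective on some neighborhood $O$ of $b$. For $x$ near $a$ the defining relations read $T^{m}(S_{1}x)=T^{n}x$ and $T^{p}(S_{2}x)=T^{q}x$; applying $T^{k-m}$ and $T^{k-p}$ respectively yields $T^{k}(S_{1}x)=T^{k-m+n}x$ and $T^{k}(S_{2}x)=T^{k-p+q}x$, and the hypothesis $m-n=p-q$ makes the two exponents equal, so $T^{k}(S_{1}x)=T^{k}(S_{2}x)$. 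By continuity of $S_{1},S_{2}$ we have $S_{1}x,S_{2}x\in O$ for $x$ near $a$, and injectivity of $T^{k}$ on $O$ forces $S_{1}x=S_{2}x$; thus $S_{1}$ and $S_{2}$ agree on a neighborhood of $a$, i.e. have the same germ.

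I expect the only delicate point to be the domain bookkeeping in the composition step of (i): one must check that after rewriting, the expression is defined on a genuine open neighborhood of each point of $\tu{dom}(S_{1}S_{2})$ and that the injectivity sets can be taken open, which is exactly where (DC) and the local-homeomorphism property are used together. Everything else reduces to the arithmetic of $\mathbb{N}^{r}$ and the identities $k-n\geq 0$, $k-p\geq 0$ with $k=n\vee p$.
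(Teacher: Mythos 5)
Your proposal is correct and follows essentially the same route as the paper: in (i) the whole content is closure under composition, handled by rewriting the middle word via $k=n\vee p$ with (DC) guaranteeing membership in $\mathrm{dom}(T^{k})$, yielding the same normal form $(T^{m+k-n})^{-1}T^{q+k-p}$ as in the paper; in (ii) you apply $T^{m\vee p}$ (legitimate by (DC)) and use its local injectivity near $b$ together with $m-n=p-q$, which is exactly the paper's argument up to whether one shrinks $U,W$ to a common neighborhood or pulls back a neighborhood $O$ of $b$ by continuity.
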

\begin{proof}
It is clear that $(T_{|U}^{m})^{-1}T_{|V}^{n}\in \mathcal{G}(X,T)$ and, since $\mathcal{G}(X,T)$ is full, it is still true for a partial homeomorphism locally
of this form. The inverse of $(T_{|U}^{m})^{-1}T_{|V}^{n}$ is $(T_{|V}^{n})^{-1}T_{|U}^{m}$ which belongs to $\mathcal{G}(X,T)$.
It remains to show that the product of $(T_{|U}^{m})^{-1}T_{|V}^{n}$ and $(T_{|W}^{p})^{-1}T_{|Y}^{q}$ is locally of the same form.
When $r=1$ this is the alternative $n\geq p$ or $n\leq p$ given in the proof of Lemma 2.4 of \cite{re2}. In our setting this alternative does not work since $\mathbb{N}^{r}$ is not totally ordered.
For this reason we need the condition (DC).
Let $x\in Y$ such that $(T_{|W}^{p})^{-1}T_{|Y}^{q}x=y\in V$. We can suppose that this happens in a neighborhood of $x$ and so we assume that it is true on $Y$.
Then with $z=(T_{|U}^{m})^{-1}T_{|V}^{n}y$ we have $T^{q}x=T^{p}y$ and $T^{n}y=T^{m}z$. From condition (DC) we have $y\in\textrm{dom}(T^{p\vee n})$ and
$T^{q+p\vee n-p}x=T^{p\vee n}y=T^{m+p\vee n-n}z$ for any $x\in Y$ so that $(T_{|U}^{m})^{-1}T_{|V}^{n}(T_{|W}^{p})^{-1}T_{|Y}^{q}$ is locally of the form
$(T_{|Z}^{m+p\vee n-n})^{-1}T_{|Z'}^{q+p\vee n-p}$.\\
(ii)Taking neighborhoods of $a$ and $(T_{|U}^{m})^{-1}T_{|V}^{n}a$ we may assume that $W=U$ and $Y=V$, $T^{p}(U)=T^{q}(V)$ and $T^{m\vee p}$ is injective on $U$.
For $x\in V$, $(T_{|U}^{m})^{-1}T_{|V}^{n}x=y\in U$, $(T_{|U}^{p})^{-1}T_{|V}^{q}x=y'\in U$ we have $T^{m}y=T^{n}x$ and $T^{p}y'=T^{q}x$.
As $U\subset\textrm{dom}(T^{m})\cap\textrm{dom}(T^{p})\subset\textrm{dom}(T^{m\vee p})$ we have
$T^{m\vee p}y=T^{n+m\vee p-m}x=T^{q+m\vee p-p}x=T^{m\vee p}y'$ and therefore $y=y'$ so
$(T_{|U}^{m})^{-1}T_{|V}^{n}=(T_{|U}^{p})^{-1}T_{|V}^{q}$.
\end{proof}
Having defined a pseudogroup, we denote by Germ($X,T$) the groupoid of germs of $\mathcal{G}(X,T)$.
 
Following \cite{re2} Definition.2.5 we consider another groupoid, the semidirect product groupoid
(simply replacing $\mathbb{Z}$ with $\mathbb{Z}^{r}$):
\begin{defn}\label{semidirect} Let $(X,T)$ be a MGDS with (DC). Its semidirect groupoid is
$$
G(X,T)=\{(x,m-n,y);m,n\in \mathbb{N}^{r},x\in dom(T^{m}), y\in dom(T^{n}), T^{m}x=T^{n}y\}
$$
with the groupoid structure induced by the product structure of the trivial groupoid
$X\times X$ and of the group $\mathbb{Z}^{r}$.
The topology is defined by the basic open sets
$$\mathcal{U}(U;m,n;V)=\{(x,m-n,y): (x,y)\in U\times V, T^{m}x=T^{n}y\} $$
where $U$ (respectively $V$) is an open subset of the domain of $T^{m}$ (respectively $T^{n}$) on which $T^{m}$
(respectively $T^{n}$) is injective. 
\end{defn}
The family of given subsets is indeed a basis for a topology since\\
$\mathcal{U}(U;m,n;V)\cap\mathcal{U}(U';m',n';V')\supset \mathcal{U}(U\cap U';m\vee m',n\vee n';V\cap V')$.
Thus, $\gamma=(x,z,y)$ and $\eta=(x',z',y')$ in $G(X,T)$ are composable if and only if $y=x'$ and then
$\gamma \eta=(x,z+z',y')$. The range and domain are $r(x,z,y)=x$ and $d(x,z,y)=y$. This is a groupoid indeed since $y\in \textup{dom}(T^{n\vee m'})$ so
$T^{m+n\vee m'-n}x=T^{n'+n\vee m'-m'}y'$. In the absence of the condition (DC), $G(X,T)$ may not be a groupoid. Consider, for example, $(X,L,R)$ as in example \ref{example}(iii). The condition (DC) is not satisfied since $\textup{dom}(L)\cap \textup{dom}(R)$ is the set of nonvoid words, while dom($LR$) is the set of words with length greater or equal to 2. Let $|\cdot|$ be the word
length on $X$ and $\Omega$ the empty word. For $x,y\in X$ we have
$\gamma=(x,(\vert x\vert,-\vert y \vert),y)\in G(X,T)$ and $\eta=(y,(\vert y \vert, 0),\Omega)\in G(X,T)$ since $T^{(\vert x \vert,0)}x=L^{\vert x \vert}x=\Omega=R^{\vert y \vert}y=T^{(0,\vert y \vert)}$
and $T^{(\vert y \vert,0)}y=L^{\vert y \vert}y=\Omega=T^{0}\Omega$ but
$\gamma\eta=(x,(\vert x\vert+\vert y \vert, -\vert y \vert),\Omega)\notin G(X,T)$ since $x\notin \textup{dom}(T^{n})$ for any $n\in \mathbb{N}^{2}$ with
$n_{1}>\vert x\vert$.

We assume again that $(X,T)$ have (DC). According to (ii) of the previous lemma, there is a map $\pi$ from $G(X,T)$ onto Germ($X,T$) which sends
$(x,m-n,y)$ into the germ $[x,(T_{|U}^{m})^{-1}T_{|V}^{n},y]$ where $U$ is an open neighborhood of $x$ on which $T^{m}$ is injective
and $V$ is an open neighborhood of $y$ on which $T^{n}$ is injective. This map is continuous and is a groupoid homomorphism.
It is an isomorphism when $(X,T)$ is essentially free.
\begin{defn}(Conform Definition.2.6 \cite{re2})\label{esfree}
We shall say that a MGDS $(X,T)$ is essentially free if for every pair of distinct $m,n\in \mathbb{N}^{r}$, there is no open set on which $T^{n}$ and $T^{m}$ agree.
\end{defn}
\begin{lemma}(Conform Lemma 2.7 \cite{re2})
Let $(X,T)$ be an essentially free MGDS with (DC). Then:\\
(i)If $(T_{|U}^{m})^{-1}T_{|V}^{n}$ and $(T_{|W}^{p})^{-1}T_{|Y}^{q}$, where $m,n,p,q\in \mathbb{N}$ and $U,V,W,Y$ are open sets such that
$T_{|U}^{m},T_{|V}^{n},T_{|W}^{p},T_{|Y}^{q}$ are injective and have the same germ at $a$, then $m-n=p-q$.\\
(ii)The map $c:\textrm{Germ}(X,T)\to\mathbb{Z}^{r}$ such that $c[(T_{|U}^{m})^{-1}T_{|V}^{n}x,(T_{|U}^{m})^{-1}T_{|V}^{n},x]=m-n$ is a continuous homomorphism.
\end{lemma}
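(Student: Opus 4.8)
The plan for (i) is to reduce the claim directly to the definition of essential freedom by manufacturing two iterates of $T$ that agree on an open set. Since the two partial homeomorphisms share a germ at $a$, they coincide with a single $S$ on an open neighborhood $N\ni a$; using the continuity of $S$ and openness I may shrink $N$ so that $N\subseteq V\cap Y$ and $S(N)\subseteq U\cap W$. For $x\in N$ the defining relations then read $T^{m}(Sx)=T^{n}x$ and $T^{p}(Sx)=T^{q}x$. The point $Sx$ lies in $\textup{dom}(T^{m})\cap\textup{dom}(T^{p})$, so (DC) places it in $\textup{dom}(T^{m\vee p})$. Applying $T^{(m\vee p)-m}$ to the first relation and $T^{(m\vee p)-p}$ to the second, both left-hand sides collapse to $T^{m\vee p}(Sx)$, whence
\[
T^{(m\vee p)-m+n}\,x \;=\; T^{m\vee p}(Sx) \;=\; T^{(m\vee p)-p+q}\,x \qquad\text{for all } x\in N .
\]
Thus these two iterates of $T$ agree on the nonempty open set $N$; essential freedom forbids this unless their exponents coincide, i.e. $(m\vee p)-m+n=(m\vee p)-p+q$, which is exactly $m-n=p-q$.

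For (ii), I would first observe that well-definedness of $c$ is precisely the combination of (i) with Lemma~\ref{lematogerms}(i): by the latter every element of $\mathcal{G}(X,T)$, hence every germ, is locally of the form $(T^{m})^{-1}T^{n}$, and by (i) the value $m-n$ is independent of which such representative is chosen. To see that $c$ is a homomorphism I would reuse verbatim the product computation carried out in the proof of Lemma~\ref{lematogerms}(i): if $\gamma$ is the germ of $(T^{m})^{-1}T^{n}$ and $\eta$ the germ of $(T^{p})^{-1}T^{q}$, then $\gamma\eta$ is the germ of a map locally equal to $(T^{m+p\vee n-n})^{-1}T^{q+p\vee n-p}$, so that
\[
c(\gamma\eta)=(m+p\vee n-n)-(q+p\vee n-p)=(m-n)+(p-q)=c(\gamma)+c(\eta),
\]
and the compatibility with inverses and units follows the same way.

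Continuity is the easiest point: since $\textrm{Germ}(X,T)$ is $r$-discrete and $\mathbb{Z}^{r}$ carries the discrete topology, it suffices to show that $c$ is locally constant. Around any germ $[Sy,S,y]$, Lemma~\ref{lematogerms}(i) lets me pick a neighborhood $O$ of $y$ on which $S$ equals a single $(T^{m})^{-1}T^{n}$; on the corresponding basic open set $\{[Sz,S,z]:z\in O\}$ the map $c$ is constantly $m-n$, as required. I expect the only genuinely delicate step to be part (i), where one must stay vigilant that after shrinking to $N$ all the compositions $T^{m}(Sx)$, $T^{p}(Sx)$ and $T^{m\vee p}(Sx)$ are actually defined — this is exactly the role of (DC), and without it the two iterates need not even share a domain, so the reduction to essential freedom would break down.
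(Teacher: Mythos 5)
Your proposal is correct and follows essentially the same route as the paper: part (i) is the same (DC)-plus-essential-freeness computation yielding $T^{(m\vee p)-m+n}=T^{(m\vee p)-p+q}$ on an open set (you merely spell out the domain bookkeeping that the paper leaves implicit), and part (ii) reuses the product formula from Lemma~\ref{lematogerms} exactly as the paper does, with the same locally-constant argument for continuity.
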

\begin{proof}
(i) By assumption, we have $T^{m}y=T^{n}x$ and $T^{p}y=T^{q}x$ for $x$ and $y$ in neighborhoods of $a$ respectively $b=(T_{|U}^{m})^{-1}T_{|V}^{n}a$.
Then on these neighborhoods we have $T^{n+m\vee p-m}x=T^{m\vee p}y=T^{q+m\vee p-p}x$. The essential freeness implies that $n+m\vee p-m=q+m\vee p-p$ so
$n-m=q-p$.\\
(ii) We have seen in the proof of the previous lemma that the product of $(T_{|U}^{m})^{-1}T_{|V}^{n}$ and $(T_{|W}^{p})^{-1}T_{|Y}^{q}$ is locally of the form
$(T_{|Z}^{m+p\vee n-n})^{-1}T_{|Z'}^{q+p\vee n-p}$ and that the inverse of $(T_{|U}^{m})^{-1}T_{|V}^{n}$ is $(T_{|V}^{n})^{-1}T_{|U}^{m}$.
This shows that $c$ is a homomorphism and by construction it is locally constant.
\end{proof}
\begin{prop} \label{thesame}
Let $(X,T)$ be a MGDS with (DC). Then $(X,T)$ is essentially free if and only if the above surjection
$\pi:G(X,T)\to\textrm{Germ}(X,T)$ is an isomorphism.
\end{prop}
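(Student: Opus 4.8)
The plan is to prove the two implications separately, exploiting that $\pi$ is already a continuous surjective groupoid homomorphism; the real content is therefore the equivalence of essential freeness with injectivity of $\pi$, since I will argue that openness of $\pi$ holds unconditionally.

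For the direction that assumes $\pi$ is an isomorphism, I would argue by contraposition. If $(X,T)$ is not essentially free, there are distinct $m,n\in\mathbb{N}^{r}$ and a nonempty open set $O\subseteq\mathrm{dom}(T^{m})\cap\mathrm{dom}(T^{n})$ on which $T^{m}=T^{n}$. Choosing $x\in O$ and shrinking $O$ to a neighborhood $U$ of $x$ on which $T^{m}$ is injective, one has $(x,m-n,x)\in G(X,T)$ since $T^{m}x=T^{n}x$, while $(x,0,x)$ is the unit at $x$. On $U$, however, $(T_{|U}^{m})^{-1}T_{|U}^{n}=\id_{U}$, so $\pi(x,m-n,x)$ is the germ of the identity at $x$, which is exactly $\pi(x,0,x)$. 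As $m\neq n$ forces $m-n\neq 0$ in $\mathbb{Z}^{r}$, these are distinct elements of $G(X,T)$ with the same image, so $\pi$ is not injective. This proves the ``only if'' half.

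For the converse, assume $(X,T)$ is essentially free; the point where this hypothesis enters is the injectivity of $\pi$. Suppose $\pi(x,m-n,y)=\pi(x',m'-n',y')$. By the definition of the germ equivalence relation this forces $y=y'$ and that the representatives $S=(T_{|U}^{m})^{-1}T_{|V}^{n}$ and $S'=(T_{|U'}^{m'})^{-1}T_{|V'}^{n'}$ agree on a neighborhood of $y$; in particular $x=S(y)=S'(y)=x'$. Since $S$ and $S'$ then have the same germ at $y$, part (i) of the preceding lemma gives $m-n=m'-n'$, and hence the two triples coincide. Thus $\pi$ is injective, and being already surjective it is a bijective homomorphism.

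It remains to upgrade this bijection to an isomorphism of $r$-discrete groupoids, and this topological step is the one I expect to demand the most care with the definitions. Since $\pi$ is continuous it suffices to show it is open, and I would verify this on the given basis: $\pi$ carries $\mathcal{U}(U;m,n;V)$ onto the set of germs $[S(y),S,y]$ with $S=(T_{|U}^{m})^{-1}T_{|V}^{n}$ and $y$ ranging over $\mathrm{dom}(S)$, which is a basic open bisection for the quotient topology on $\mathrm{Germ}(X,T)$. Hence $\pi$ is a continuous open bijective homomorphism, i.e.\ an isomorphism. The only genuine subtlety is matching the topology on $G(X,T)$ given by the sets $\mathcal{U}(U;m,n;V)$ with the quotient topology on $\mathrm{Germ}(X,T)$; note that this openness is independent of essential freeness, so the entire weight of the nontrivial implication rests on the injectivity argument above.
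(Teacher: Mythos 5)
Your argument is correct and is essentially the proof the paper invokes: the paper's own ``proof'' is just the citation ``word for word as in Proposition 2.8 of Renault's \emph{Cuntz-like algebras},'' and your two implications (failure of essential freeness produces $(x,m-n,x)$ and $(x,0,x)$ with the same germ; essential freeness gives injectivity via part (i) of the preceding lemma, where the condition (DC) and the $m\vee p$ trick have already done the multi-parameter work) together with the routine openness check are exactly that argument transplanted to the $\mathbb{N}^{r}$ setting.
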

\begin{proof}
Word with word as in Proposition 2.8 \cite{re2}.
\end{proof}
This homomorphism induces a morphism of algebras even if $(X,T)$ is not essentially free. We construct it in the lemma.
\begin{lemma} \label{togerms}
Let $\pi:G_{1}\to G_{2}$ a surjective morphism between two r-discrete groupoids with the property that if $s(\pi(x))=r(\pi(y))$ then $s(x)=r(y)$. Then the correspondence $C_{c}(G_{1})\ni f\to \tilde{\pi}(f)\in C_{c}(G_{2})$, $\tilde{\pi}(f)(x)=\sum_{\pi(x')=x}f(x')$, is a *-homomorphism between the topological algebras $C_{c}(G_{1})$ and $C_{c}(G_{2})$. Composing it with the regular representation of $C_{c}(G_{2})$ we obtain a bounded representation of $C_{c}(G_{1})$ therefore a morphism $\tilde{\pi}:C^{*}(G_{1})\to C^{*}_{r}(G_{2})$  
\end{lemma}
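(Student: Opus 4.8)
The plan is to verify in turn that $\tilde\pi$ lands in $C_c(G_2)$, that it respects convolution and involution, and then to invoke the universal property of the full groupoid algebra to manufacture the morphism into $C^*_r(G_2)$. Throughout, convolution is $(f*g)(\gamma)=\sum_{\gamma=\alpha\beta}f(\alpha)g(\beta)$ and involution is $f^*(\gamma)=\overline{f(\gamma^{-1})}$.

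First I would establish that $\tilde\pi(f)$ is a genuine element of $C_c(G_2)$. Because $G_1$ is $r$-discrete, every $f\in C_c(G_1)$ is a finite sum $f=\sum_i f_i$ with each $f_i$ supported in an open bisection $S_i$: cover the compact set $\mathrm{supp}(f)$ by finitely many open bisections and take a subordinate partition of unity. Shrinking the $S_i$ if necessary, $\pi|_{S_i}$ is a homeomorphism onto an open bisection of $G_2$ (both groupoids being $r$-discrete and $\pi$ intertwining the locally homeomorphic range maps), so $\tilde\pi(f_i)$ is just the transport of $f_i$ along $(\pi|_{S_i})^{-1}$, hence continuous with $\mathrm{supp}(\tilde\pi f_i)\subset\pi(\mathrm{supp}\,f_i)$. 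By linearity $\tilde\pi(f)=\sum_i\tilde\pi(f_i)\in C_c(G_2)$, and over any point $x$ the defining sum $\sum_{\pi(x')=x}f(x')$ has at most as many nonzero terms as there are bisections $S_i$, so it is finite and consistent. Continuity for the inductive-limit topologies follows at once, since a net converging uniformly with supports in a fixed compact set is carried to such a net.

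Next I would check the $*$-homomorphism property. The involution is the easy half: since $\pi(w)=x\iff\pi(w^{-1})=x^{-1}$, reindexing $w\mapsto w^{-1}$ gives
\[
\tilde\pi(f^*)(x)=\sum_{\pi(w)=x}\overline{f(w^{-1})}=\sum_{\pi(w')=x^{-1}}\overline{f(w')}=\overline{\tilde\pi(f)(x^{-1})}=(\tilde\pi f)^*(x).
\]
For multiplicativity I would expand, over a fixed $x\in G_2$,
\[
(\tilde\pi f*\tilde\pi g)(x)=\sum_{x=ab}\Big(\sum_{\pi(\alpha)=a}f(\alpha)\Big)\Big(\sum_{\pi(\beta)=b}g(\beta)\Big)=\sum_{\pi(\alpha)\pi(\beta)=x}f(\alpha)g(\beta),
\]
the last sum running over pairs $(\alpha,\beta)$ whose images are composable with product $x$, and compare this with $\tilde\pi(f*g)(x)=\sum_{\pi(w)=x}\sum_{w=\alpha\beta}f(\alpha)g(\beta)$, whose pairs are composable in $G_1$ with $\pi(\alpha\beta)=x$. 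The one place the hypothesis enters is the claim that these two index sets coincide: composability in $G_1$ forces composability of the images because $\pi$ is a homomorphism, while conversely $s(\pi(\alpha))=r(\pi(\beta))$ forces $s(\alpha)=r(\beta)$ \emph{precisely} by the assumed property, whence $\alpha\beta$ is defined in $G_1$ and $\pi(\alpha\beta)=\pi(\alpha)\pi(\beta)=x$. The two finite sums therefore agree term by term.

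Finally I would pass to the $C^*$-level. The regular representation assembles into a $*$-representation $\mathrm{Reg}\colon C_c(G_2)\to B(\mathcal H)$ on $\mathcal H=\bigoplus_u\ell^2((G_2)_u)$ whose image generates $C^*_r(G_2)$. The composite $L:=\mathrm{Reg}\circ\tilde\pi$ is then a $*$-representation of $C_c(G_1)$, being a composite of $*$-homomorphisms. Every $*$-representation of $C_c(G_1)$ is bounded by the $I$-norm, so the full norm $\|f\|=\sup_{L'}\|L'(f)\|$ is finite and $\|L(f)\|\le\|f\|$; hence $L$ extends continuously to $C^*(G_1)$, and its image lies in $\overline{\mathrm{Reg}(C_c(G_2))}=C^*_r(G_2)$. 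This delivers the desired morphism $\tilde\pi\colon C^*(G_1)\to C^*_r(G_2)$. The main obstacle is the first step: making sense of $\tilde\pi(f)$ as an honest compactly supported continuous function rather than a merely pointwise expression. Everything hinges on the fibres of $\pi$ meeting $\mathrm{supp}(f)$ in only finitely many points, which is exactly where the $r$-discreteness does the real work (reducing $f$ to pieces on bisections on which $\pi$ is a homeomorphism); once this is secured, the algebraic identities are formal and call on the composability hypothesis only in the multiplicativity step.
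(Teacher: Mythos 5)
Your proof follows the same overall architecture as the paper's --- well-definedness of $\tilde{\pi}(f)$, a term-by-term comparison of the two index sets for multiplicativity (where the composability hypothesis enters exactly as in the paper), and a norm bound allowing passage to $\tilde{\pi}:C^{*}(G_{1})\to C^{*}_{r}(G_{2})$ --- but two steps are handled differently. For finiteness of the sum, the paper observes that the hypothesis forces the whole fibre $\{x':\pi(x')=x\}$ to lie in a single range fibre $G_{1}^{u}$ (since $\pi^{0}$ is a bijection and $\pi(G_{1}^{u})=G_{2}^{\pi^{0}(u)}$), so the fibre meets $\mathrm{supp}(f)$ in a compact subset of a discrete set; you instead chop $f$ into pieces supported on bisections. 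Both work, but the paper's fibre identity is also what it uses to prove the estimate $\|\tilde{\pi}(f)\|_{I}\leq\|f\|_{I}$ directly, which combined with the $I$-norm boundedness of the regular representation yields the extension to $C^{*}(G_{1})$. You replace this computation by an appeal to the general theorem that every $*$-representation of $C_{c}(G_{1})$ is $I$-norm bounded (Renault's result for $r$-discrete groupoids); that is legitimate but heavier, and if one defines the full norm as the supremum over $I$-bounded representations, you still owe the verification that your composite representation is $I$-bounded --- which is precisely the estimate the paper carries out.

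The one place where your justification is genuinely incomplete is the assertion that, after shrinking, $\pi|_{S_{i}}$ is a homeomorphism onto an \emph{open} bisection of $G_{2}$. The intertwining $r_{2}\circ\pi=\pi^{0}\circ r_{1}$ together with injectivity of $\pi^{0}$ does give injectivity of $\pi$ on each bisection, but it does not give openness of $\pi(S_{i})$ or continuity of $(\pi|_{S_{i}})^{-1}$: for that you need $\pi$ (or at least $\pi^{0}$) to be an open map, which is not among the stated hypotheses. Without some such assumption $\tilde{\pi}(f)$ can fail to be continuous or compactly supported (take $G_{2}$ to be $G_{1}$ equipped with a strictly finer \'etale topology and $\pi=\mathrm{id}$). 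To be fair, the paper's own proof silently skips continuity of $\tilde{\pi}(f)$ as well, and in the intended application $\pi:G(X,T)\to\mathrm{Germ}(X,T)$ is the canonical quotient onto the germ groupoid, which is open and carries the basic bisections $\mathcal{U}(U;m,n;V)$ onto open bisections, so your argument does go through there. You should either add openness of $\pi$ as a hypothesis or note explicitly that it holds in the situation of Proposition \ref{germs}.
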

\begin{proof}
The condition in the statement is $(\pi\times \pi)^{-1}(G_{2}^{2})=G_{1}^{2}$. This means that composable pairs lifts only to composable pairs. In particular, the restriction $\pi^{0}$ of $\pi$ to the unit space must be a bijection. Moreover, $\pi$ satisfies this condition if $\pi^{0}$ is a bijection which identifies the equivalence relations associated to $G_{1}$ and $G_{2}$. In particular if $\pi(u)=r(x)$ then $\{x';\pi(x')=x\}\subset G_{1}^{u}$. Indeed, if $\pi(x')=\pi(y')=x$ then $\pi(y'^{-1})$ and $\pi(x')$ are composable, so $y'^{-1}$ and $x'$ are composable, hence $r(x')=r(y')$. This means that $\pi(G_{1}^{u})=G_{2}^{\pi^{0}(u)}$. If $f\in C_{c}(G_{1})$ then $\{x':\pi(x')=x\}\subset \textrm{supp}(f)\cap G_{1}^{u}$ which is finite. Therefore the sum which defines $\tilde{\pi}(f)$ is finite. We have $\textrm{supp}(\tilde{\pi}(f))\subset\pi(\textrm{supp}(f)$ so $\tilde{\pi}(f)\in C_{c}(G_{2})$.

We compute
$$\tilde{\pi}(f\star g)(x)=\sum_{\pi(x')=x}f\star g (x')=\sum_{\pi(y'z')=x}f(y')g(z')$$
$$\tilde{\pi}(f)\star\tilde{\pi}(g)(x)=\sum_{yz=x}\tilde{\pi}(f)(y)\tilde{\pi}(g)(z)$$
Since $\pi$ is surjective we have 
$$\tilde{\pi}(f)\star\tilde{\pi}(g)(x)=\sum_{\pi(y')\pi(z')=x}f(y')g(z')$$
The first sum runs over $\{(y',z'): \pi(y')\pi(z')=x\}$ and the second over sum runs over $\{(y',z'): \pi(y'z')=x\}$. These two sets are equal by the assumption on $\pi$ so $\tilde{\pi}$ is an algebraic morphism

If $f_{n}\in C_{c}(G_{1})$, $\textrm{supp}(f_{n})\subset K$, $f_{n}\rightarrow^{u} f$ on K, then $\tilde{\pi}(f_{n})\rightarrow^{u} \tilde{\pi}(f)$ since the cardinal of the set $\{x':\pi(x')=x\}$ is bounded when $x$ runs over a compact set.

Finally, one has 
\begin{eqnarray*}
\|\tilde{\pi}(f)\|_{I}\\
&=&
sup_{u\in G_{2}^{0}}\sum_{r(x)=u}|\tilde{\pi}(f)(x)|\\
&=&
sup_{u\in G_{2}^{0}}\sum_{r(x)=u}|\sum_{\pi(x')=x}f(x')|\\
&\leq&
sup_{u\in G_{2}^{0}}\sum_{r(x)=u, \pi(x')=x}|f(x')|
\end{eqnarray*}

Since $\pi(G_{1}^{u})=G_{2}^{\pi^{0}(u)}$ and $\pi^{0}$ is a bijection, the last sum is 
$$sup_{u\in G_{1}^{0}}\sum_{r(x)=u}|f(x)|=\|f\|_{I}\geq \|\tilde{\pi}(f)\|_{I}$$
Therefore $\|f\|_{I}\geq\|\tilde{\pi}(f)\|_{B(L^{2}(G_{2}))}$ since the regular representation is bounded.
\end{proof} 

\begin{prop}\label{germs}
The Lemma \ref{togerms} holds for $G_{1}=G(X,T)$, $G_{2}=Germ(X,T)$ and $\pi$ the morphism of Lemma \ref{lematogerms}.
\end{prop}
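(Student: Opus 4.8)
The plan is to apply Lemma \ref{togerms} directly: I would check that the pair $G_{1}=G(X,T)$, $G_{2}=\textrm{Germ}(X,T)$ and the map $\pi$ of Lemma \ref{lematogerms} satisfy the two hypotheses of that lemma, namely that $\pi$ is a surjective homomorphism of $r$-discrete groupoids and that $s(\pi(\gamma))=r(\pi(\eta))$ implies $s(\gamma)=r(\eta)$. Once these are in place, the conclusion of Lemma \ref{togerms} is quoted verbatim and yields the desired $*$-homomorphism $\tilde{\pi}:C^{*}(G(X,T))\to C^{*}_{r}(\textrm{Germ}(X,T))$.

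First I would collect the facts already established in the excerpt. Both $G(X,T)$ and $\textrm{Germ}(X,T)$ were introduced as $r$-discrete groupoids, and by Lemma \ref{lematogerms}(ii) the assignment $(x,m-n,y)\mapsto [x,(T_{|U}^{m})^{-1}T_{|V}^{n},y]$ is a well-defined, continuous groupoid homomorphism $\pi$ which is onto $\textrm{Germ}(X,T)$. So surjectivity and $r$-discreteness are free, and the only thing left to verify is the lifting condition on composable pairs.

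The one point that deserves care is the identification of unit spaces, and this is where the verification really sits. In $G(X,T)$ the units are the triples $(x,0,x)$, identified with $x\in X$, with $r(x,m-n,y)=x$ and $s(x,m-n,y)=y$; in $\textrm{Germ}(X,T)$ the units are the germs of the identity, again identified with $X$, and the germ $[x,S,y]$ has range $x$ and source $y$. Under these identifications $\pi$ carries $(x,0,x)$ to $[x,\id,x]$, so the restriction $\pi^{0}$ of $\pi$ to the unit space is the identity of $X$, hence a bijection, and $\pi$ intertwines the structure maps, i.e. $r\circ\pi=\pi^{0}\circ r$ and $s\circ\pi=\pi^{0}\circ s$.

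Granting this, the lifting condition is immediate, and this is the whole content of the argument. For $\gamma,\eta\in G(X,T)$ one has $s(\pi(\gamma))=\pi^{0}(s(\gamma))$ and $r(\pi(\eta))=\pi^{0}(r(\eta))$; since $\pi^{0}$ is injective, $s(\pi(\gamma))=r(\pi(\eta))$ forces $s(\gamma)=r(\eta)$. Concretely, writing $\gamma=(x,m-n,y)$ and $\eta=(x',p-q,y')$, both equalities say exactly $y=x'$, so $(\pi\times\pi)^{-1}(G_{2}^{2})=G_{1}^{2}$ holds. I do not expect a genuine obstacle here: because the two groupoids share the unit space $X$ and $\pi$ is the identity on units, $\pi$ neither merges units nor creates composable pairs that were not already composable upstairs, which is precisely the hypothesis of Lemma \ref{togerms}.
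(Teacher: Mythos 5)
Your proposal is correct and follows essentially the same route as the paper: the paper's proof simply notes that $\pi$ is continuous and surjective and that $\pi(x,z,y)$ and $\pi(x',z',y')$ are composable exactly when $y=x'$, which is the composability condition upstairs. Your more explicit identification of the common unit space $X$ and of $\pi^{0}$ as the identity is just a fuller write-up of that same observation.
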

\begin{proof}
$\pi$ is continuous and surjective. $\pi(x,z,y)$ and $\pi(x',z',y')$ are composable if and only if $y=x'$ which is the condition for $(x,z,y)$ and $(x',z',y')$ to be composable.
\end{proof}

We assume from now on that $X$ is Hausdorff, second countable and locally compact. Then $G(X,T)$ becomes a Hausdorff, locally compact \'etale groupoid.
In the next theorem we prove the amenability of $G(X,T)$. For the proof we need some notations. 
The motivation comes from the example \ref{example}. 
For $1\leq j\leq r$ let $X_{j}=\bigcap_{n\in \mathbb{N}} \textrm{dom}(T_{j}^{n})$ and for a subset $J\subset \{1,\ldots,r\}$ ($J$ may be void) we denote by
$X_{J}=\cap_{j\in J}X_{j}\bigcap\cap_{j\notin J}(X\backslash X_{j})$. Clearly $X_{J}\cap X_{J'}=\Phi$ for $J\neq J'$ and for $x\in X$ we have
$x\in X_{J_{x}}$ where $J_{x}=\{j: x\in X_{j}\}$ so the subsets $X_{J}$ provide a partition of $X$. When $r=1$ we get the partition in the end of the proof of
Proposition 2.9 (i) \cite{re2}.
For $x\in X$ let
$$\sigma(x)=\textrm{sup}\{n\in \mathbb{N}^{r}:x\in \textrm{dom}(T^{n})\}\in \overline{\mathbb{N}}^{r}$$
For the rank one case $\sigma(x)$ may be considered as the exit time of $T$, the first time when $x$
escapes the domain of $T$. In general $\sigma(x)_{j}$ can be thought of as the exit time of $T_{j}$ so we call $\sigma(x)$ the \textit{exit time of T}.
The crucial assumption (DC) ensures the existence of this supremum.
Then $X_{J}=\{x\in X:\sigma(x)_{j}=\infty \textrm{ for }j\in J \textrm{ and } \sigma(x)_{j} \textrm{ finite for }j\notin J\}$.
For $n\in \overline{\mathbb{N}}^{r}$ and $J=\{j:n_{j}=\infty\}$
we have $\sigma^{-1}(n)=\cap_{j\in J}X_{j}\bigcap \cap_{j\notin J}\textrm{dom}(T^{n_{j}})\backslash\textrm{dom}(T^{n_{j}+e_{j}})\subset X_{J}$
which is Borel and analytic so that a preimage by $\sigma$ is analytic. Note that from the condition (DC) we have
$\cap_{j\notin J}\textrm{dom}(T^{n})\backslash\textrm{dom}(T^{n+e_{j}})=\cap_{j\notin J}\textrm{dom}(T^{n_{j}}_{j})\backslash\textrm{dom}(T^{n_{j}+1}_{j})$.
Denote by $\mathbb{Z}^{J}$ the subgroup of $\mathbb{Z}^{r}$ given by the inclusion
$\mathbb{Z}^{|J|}\ni z\mapsto \sum_{j\in J}z_{j}e_{j}\in \mathbb{Z}^{r}$ and, for $z\in \mathbb{Z}^{r}$,
let $z_{J}=\sum_{j\in J}z_{j}e_{j}\in \mathbb{Z}^{J}$. We use a similar notation for $\mathbb{N}^{J}$ and $n_{J}$.
\begin{lemma}  \label{middle}
(i) For $x,y\in X$ and $n\in \mathbb{N}^{r}$ we have $\sigma(T^{n}x)=\sigma(x)-n$.\\
(ii)For $(x,z,y)\in G(X,T)\vert_{X_{J}}$ we have $z_{J^{c}}=\sigma(x)_{J^{c}}-\sigma(y)_{J^{c}}$ where $J^{c}=\{1,\ldots,r\}\backslash J$.
\end{lemma}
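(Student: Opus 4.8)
The plan is to first isolate the order-theoretic content of the exit time, and then reduce both parts to a short computation with composition of partial maps.

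The key preliminary observation I would establish is that for $m\in\N^{r}$ one has $x\in\textrm{dom}(T^{m})$ if and only if $m\leq\sigma(x)$ (componentwise in $\ol{\N}^{r}$). The set $D_{x}=\{m\in\N^{r}:x\in\textrm{dom}(T^{m})\}$ is downward closed, because $T^{m}=T^{m'}T^{m-m'}$ for $m'\leq m$ forces $x\in\textrm{dom}(T^{m'})$, and it is closed under $\vee$ by (DC). For a nonempty subset of $\N^{r}$ enjoying these two closure properties, every $m\leq\sup D_{x}=\sigma(x)$ actually lies in $D_{x}$: for each coordinate $j$ choose an element of $D_{x}$ whose $j$-th coordinate dominates $m_{j}$ (possible since $\sigma(x)_{j}$ is either $\infty$ or a maximum attained in $D_{x}$), take the join of these finitely many elements, which stays in $D_{x}$ by iterating (DC), and conclude $m\in D_{x}$ by downward closure. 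This is exactly the step where (DC) is indispensable, and it is the only real obstacle; once it is in place both parts are immediate.

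For (i), assuming $x\in\textrm{dom}(T^{n})$ so that $T^{n}x$ is defined, I would compute $\sigma(T^{n}x)=\sup\{k\in\N^{r}:T^{n}x\in\textrm{dom}(T^{k})\}$. By the definition of the domain of a composite together with $T^{k}T^{n}=T^{k+n}$, one has $T^{n}x\in\textrm{dom}(T^{k})$ iff $x\in\textrm{dom}(T^{k+n})$, which by the preliminary observation holds iff $k+n\leq\sigma(x)$, i.e. $k\leq\sigma(x)-n$. Taking the supremum over such $k$ yields $\sigma(T^{n}x)=\sigma(x)-n$, the subtraction being understood in $\ol{\N}^{r}$ (so $\infty-n_{j}=\infty$ on the infinite coordinates).

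For (ii), write $z=m-n$ with $x\in\textrm{dom}(T^{m})$, $y\in\textrm{dom}(T^{n})$ and $T^{m}x=T^{n}y$. Applying (i) to each side gives $\sigma(x)-m=\sigma(T^{m}x)=\sigma(T^{n}y)=\sigma(y)-n$ in $\ol{\N}^{r}$. For $j\in J$ both $\sigma(x)_{j}$ and $\sigma(y)_{j}$ equal $\infty$, so the $j$-th identity is merely $\infty=\infty$ and carries no information; but for $j\in J^{c}$ both exit times are finite by the description $X_{J}=\{x:\sigma(x)_{j}=\infty$ for $j\in J$, finite for $j\notin J\}$, so the $j$-th equation can be rearranged to $z_{j}=m_{j}-n_{j}=\sigma(x)_{j}-\sigma(y)_{j}$. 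Collecting the coordinates in $J^{c}$ gives precisely $z_{J^{c}}=\sigma(x)_{J^{c}}-\sigma(y)_{J^{c}}$, as claimed.
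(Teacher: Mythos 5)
Your proof is correct and follows essentially the same route as the paper's: part (i) is the same supremum computation $\sigma(T^{n}x)=\sup\{k: x\in\mathrm{dom}(T^{k+n})\}=\sigma(x)-n$, and part (ii) deduces the claim by applying (i) to both sides of $T^{m}x=T^{n}y$ and subtracting the finite $J^{c}$-coordinates. The only difference is that you make explicit, via the downward closure and $\vee$-closure of $D_{x}=\{m:x\in\mathrm{dom}(T^{m})\}$, the (DC)-dependent fact that $\sup\{m\geq n:\ x\in\mathrm{dom}(T^{m})\}=\sigma(x)$, which the paper's chain of equalities uses without comment.
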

\begin{proof}
(i) By definition of $\sigma$ we have $\sigma(T^{n}x)=\textrm{sup}\{m\in \mathbb{N}^{r}:x\in \textrm{dom}(T^{m+n})\}=
\textrm{sup}\{m-n\in \mathbb{N}^{r}:m\geq n, x\in \textrm{dom}(T^{m})\}=\textrm{sup}\{m\in \mathbb{N}^{r}:m\geq n, x\in \textrm{dom}(T^{m})\}-n=
\sigma(x)-n$.\\
(ii) By definition of $G(X,T)$ there exist $n,m\in \mathbb{N}^{r}$ such that $T^{n}x=T^{m}y$ so by (i) we have $\sigma(T^{n}x)=\sigma(x)-n=\sigma(T^{m}y)=\sigma(y)-m$.
We can substract the finite $J^{c}$ coordinates to get $z_{J^{c}}=n_{J^{c}}-m_{J^{c}}=\sigma(x)_{J^{c}}-\sigma(y)_{J^{c}}$.
\end{proof}

The following lemma is very likely folklore:
\begin{lemma}\label{decomposition}
Let $G$ be a Borel groupoid, $(X_{j})_{j\in J}$, $J$ a finite set, a partition of $G^{0}$ by invariant Borel sets.
Then $G$ is measurewise amenable if and only if $G\vert_{X_{j}}$ is measurewise amenable.
\end{lemma}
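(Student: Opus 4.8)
The plan is to exploit the fact that, because each $X_j$ is invariant, no arrow of $G$ joins points lying in different members of the partition; hence $G$ splits as the disjoint union $G=\bigsqcup_{j\in J}G\vert_{X_j}$ of Borel groupoids, and every fibre $G^x$ with $x\in X_j$ is contained entirely in $G\vert_{X_j}$. Since measurewise amenability is tested against all quasi-invariant measures, the first step is to record the correspondence between them: given a quasi-invariant measure $\mu$ on $G^0$, its restrictions $\mu_j=\mu\vert_{X_j}$ are quasi-invariant for $G\vert_{X_j}$ (with Radon--Nikodym cocycle the restriction of that of $\mu$), and $\mu=\sum_{j\in J}\mu_j$; conversely, a quasi-invariant measure $\nu$ on a single $X_j$ extends by zero to a quasi-invariant measure on $G^0$, precisely because $X_j$ is invariant.

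For the forward implication I would argue that amenability passes to restrictions to invariant Borel subsets. Starting from an arbitrary quasi-invariant $\nu$ on $X_j$, extend it by zero to $\tilde\nu$ on $G^0$; as noted, $\tilde\nu$ is quasi-invariant for $G$. Measurewise amenability of $G$ yields a sequence of approximately invariant means for $(G,\tilde\nu)$, and since $\tilde\nu$ is carried by the invariant set $X_j$ these means are supported on $G\vert_{X_j}$, so their restrictions witness amenability of $(G\vert_{X_j},\nu)$.

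The backward implication is where the finiteness of $J$ enters. Fix a quasi-invariant $\mu$ on $G^0$ and decompose it as $\mu=\sum_{j}\mu_j$. By hypothesis each $(G\vert_{X_j},\mu_j)$ is amenable, so it admits a sequence $(m_n^{j})_n$ of approximately invariant means whose approximate-invariance defect $\varepsilon_n^{j}$ tends to $0$. Because the fibres of $G$ are partitioned among the pieces, a mean on $G$ is nothing but a family of means on the $G\vert_{X_j}$, so I would set $m_n$ equal to $m_n^{j}$ on each $G\vert_{X_j}$. The defect of $m_n$ on $G$ is then controlled by $\max_{j\in J}\varepsilon_n^{j}$, which tends to $0$ since $J$ is finite; hence $(m_n)_n$ is an approximately invariant mean for $(G,\mu)$. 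As $\mu$ was arbitrary, $G$ is measurewise amenable.

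The only genuinely delicate point, and the one I would take as the main obstacle, is checking that the data entering the approximate-invariance condition decompose along the partition, so that the glued witnesses are measured correctly. This is immediate here: invariance of $X_j$ forces the fibrewise (Haar) measures $\lambda^x$ with $x\in X_j$ to be supported on $G\vert_{X_j}$, and the modular cocycle of $\mu$ restricts to that of $\mu_j$, so the relevant $L^1$-norms split as a finite sum over $j$. Finiteness of $J$ is used exactly once, to upgrade the $j$-by-$j$ convergence of the defects to a single convergent bound.
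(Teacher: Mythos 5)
Your proof is correct and follows essentially the same route as the paper: both rest on the observation that invariance of the $X_j$ splits $G$ as the disjoint union $\bigsqcup_j G\vert_{X_j}$, so that (approximately) invariant means for $G$ correspond exactly to families of such means on the pieces. The only difference is that the paper works with the exact invariant-mean characterization ($L^{\infty}(G)=\oplus_j L^{\infty}(G\vert_{X_j})$, piece the means together directly), which makes the gluing immediate, whereas you use approximately invariant means and invoke finiteness of $J$ to control the defects --- a harmless variation.
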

\begin{proof}
This follows directly form the definition of amenability (\cite{ar} Definition. 3.2.8).
Precisely, we denote by $\sqcup$ the disjoint union of Borel set with the obvious Borel structure.
We know that $G^{0}=\sqcup X_{j}$, $G=\sqcup G\vert_{X_{j}}$ so
$L^{\infty}(G)=\oplus L^{\infty}(G\vert_{X_{j}})$ and $L^{\infty}(G^{0})=\oplus L^{\infty}(X_{j})$. Therefore a mean
$m:L^{\infty}(G)\to L^{\infty}(G^{0})$ is invariant if and only if the restrictions $m_{j}:L^{\infty}(G\vert_{X_{j}})\to L^{\infty}(X_{j})$ are invariant.
Conversely, we can piece $m_{j}$ together to define $m=\oplus m_{j}$ an invariant mean $L^{\infty}(G)\to L^{\infty}(G^{0})$.
\end{proof}
\begin{theorem} \label{amenable}(Conform Prop.2.9 \cite{re2}) Let $(X,T)$ be a MGDS with (DC).
Then,\\
i) $G(X,T)$ is amenable;\\
ii) the full and reduced $C^{*}$-algebras coincide;\\
iii) the $C^{*}$-algebra $C^{*}(X,T)=C^{*}(G(X,T))$ is nuclear;
\end{theorem}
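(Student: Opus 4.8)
The plan is to realise $G(X,T)$ as an extension of a principal groupoid by the amenable group $\mathbb{Z}^{r}$, and to do the bookkeeping piece by piece along the partition $X=\bigsqcup_{J}X_{J}$ set up above. First I would check that this partition is by $G(X,T)$-invariant Borel sets. If $(x,z,y)\in G(X,T)$ with $T^{m}x=T^{n}y$, then Lemma \ref{middle}(i) gives $\sigma(x)-m=\sigma(T^{m}x)=\sigma(T^{n}y)=\sigma(y)-n$, so $\sigma(x)$ and $\sigma(y)$ differ by the finite vector $m-n\in\mathbb{Z}^{r}$; hence $\sigma(x)_{j}=\infty$ exactly when $\sigma(y)_{j}=\infty$, i.e.\ $J_{x}=J_{y}$. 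Thus the range and source of any arrow lie in the same $X_{J}$, so each $X_{J}$ is invariant. By Lemma \ref{decomposition} it then suffices to prove that every restriction $G(X,T)\vert_{X_{J}}$ is amenable.

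On a fixed piece $G(X,T)\vert_{X_{J}}$ I would use the continuous cocycle $c_{J}\colon G(X,T)\vert_{X_{J}}\to\mathbb{Z}^{J}$, $(x,z,y)\mapsto z_{J}$, the projection of $z$ onto the coordinates in $J$. The point of having localised to $X_{J}$ is that, by Lemma \ref{middle}(ii), the complementary coordinates are a coboundary, $z_{J^{c}}=\sigma(x)_{J^{c}}-\sigma(y)_{J^{c}}$, and the exit times $\sigma(\cdot)_{J^{c}}$ are finite there. Consequently, on the kernel $\ker c_{J}=\{(x,z,y):z_{J}=0\}$ the full label $z=z_{J^{c}}$ is a function of $(x,y)$ alone, so the map $(x,z,y)\mapsto(x,y)$ identifies $\ker c_{J}$ with a principal groupoid, i.e.\ an equivalence relation $R_{J}$ on $X_{J}$.

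Next I would show $R_{J}$ is amenable by exhibiting it as an increasing union of proper subgroupoids. The relation $R_{J}$ consists of pairs $(x,y)$ with $T^{m}x=T^{n}y$ for some $m,n$ with $m_{J}=n_{J}$, and it is exhausted by the graph equivalences of the maps $T^{k}$ (which are everywhere defined and finite-to-one on $X_{J}$ in the $J$-directions, the finitely many $J^{c}$-shifts being bounded by the finite exit times). The directedness of this family is exactly where (DC) enters, as in the proof of Lemma \ref{lematogerms}: commutativity together with (DC) lets one raise any two witnessing exponents to their join, so the approximating relations are nested. Each such approximating relation is a proper \'etale equivalence relation, hence amenable, and an increasing union of amenable subgroupoids is amenable (\cite{ar}); therefore $R_{J}$ is amenable.

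Finally I would invoke the permanence result for extensions (\cite{ar}): since $c_{J}$ is a continuous cocycle onto the amenable group $\mathbb{Z}^{J}$ with amenable kernel $R_{J}$, the groupoid $G(X,T)\vert_{X_{J}}$ is amenable, and Lemma \ref{decomposition} then yields (i). Assertions (ii) and (iii) are standard consequences for the second countable, locally compact, Hausdorff \'etale groupoid at hand: amenability forces weak containment of the trivial representation, so the full and reduced norms agree, giving (ii), and it makes $C^{*}_{r}(G(X,T))$ nuclear, which together with (ii) gives (iii) (\cite{ar}). I expect the genuine obstacle to be precisely the amenability of the kernel $R_{J}$, namely verifying that the approximating relations are honestly proper (finite-to-one fibres of the $T^{k}$) and that the family is directed via (DC); everything else is formal manipulation with the cocycle and the cited permanence properties.
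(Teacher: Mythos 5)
Your architecture is exactly the paper's: partition $X$ into the invariant Borel pieces $X_{J}$, reduce via Lemma \ref{decomposition}, use the cocycle $c_{J}$ onto $\mathbb{Z}^{J}$ with Lemma \ref{middle}(ii) forcing the $J^{c}$-coordinates of $z$ to be determined by $(x,y)$ so that $\ker c_{J}=R_{J}$ is an equivalence relation, exhaust $R_{J}$ by relations $R_{J}^{N}$ indexed by $N\in\mathbb{N}^{J}$, and close with the extension permanence result of \cite{ar}. Your explicit verification of invariance of the $X_{J}$ via Lemma \ref{middle}(i) is a detail the paper merely asserts, and is correct.

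The genuine gap is the step you yourself flag as ``the obstacle'': the properness of the approximating relations $R_{J}^{N}$. You propose to verify it by checking that the fibres of the $T^{k}$ are finite-to-one, but that is not the relevant criterion. In \cite{ar} (Example 2.1.4(2), the reference the paper uses) properness of a discrete equivalence relation means that the quotient Borel space $X_{J}/R_{J}^{N}$ is analytic, i.e.\ countably separated; finite-to-one classes do not imply this in general (increasing unions of finite Borel equivalence relations can fail to be smooth, which is precisely why one cannot skip this step). The paper devotes the second half of its proof to it: it produces a countable separating family $(Y_{i})$ of Borel subsets of $X_{J}$, each contained in a level set of $\sigma(\cdot)_{J^{c}}$ and on which the maps $T^{n+\sigma(\cdot)_{J^{c}}}$ are injective, and shows the saturations $[Y_{j}]$ separate inequivalent points. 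A shorter route, which your sketch almost suggests, is to note that on $X_{J}$ (where $\sigma(\cdot)_{J}\equiv\infty$) condition (DC) lets you raise any witnessing exponent $n\leq N$ to $N$ itself, so $R_{J}^{N}$ is exactly the kernel of the single Borel map $x\mapsto T^{N+\sigma(x)_{J^{c}}}(x)$; pulling back a countable separating family of Borel sets of $X$ through this map then gives countable separability of the quotient. Either way, some such argument must be supplied, and your parenthetical that the $J^{c}$-shifts are ``bounded by the finite exit times'' is also off: $\sigma(\cdot)_{J^{c}}$ is finite but unbounded on $X_{J}$, which is why the paper must treat $x\mapsto T^{n+\sigma(x)_{J^{c}}}(x)$ as a Borel rather than continuous map and needs the Borel measurability of $\sigma$.
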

\begin{proof}(i) We will check measurewise amenability (according to \cite{ar}, 3.3.7, it is equivalent to topological amenability for \'etale groupoids).
Each $X_{J}$ is an invariant Borel set for $G(X,T)$. By Lemma \ref{decomposition} it is enough to prove that the reduction of $G(X,T)$ on $X_{J}$ is amenable.
By the definition of $X_{J}$, the homomorphism $c_{J}:G(X,T)\vert_{X_{J}}\to \mathbb{Z}^{J}$ defined by $c_{J}(x,z,y)=z_{J}$ is strongly surjective in the sense given in
\cite{ar} Definition.5.3.7. We shall show that $R_{J}=c_{J}^{-1}(0)$ is an amenable equivalence relation.
Once this proven, we can now apply a result on the amenability of an extension (\cite{ar}, 5.3.14) to conclude that $G(X,T)\vert_{X_{J}}$ is amenable, hence
$G(X,T)$ amenable.
We have
$$R_{J}=\{(x,m-n,y):x,y\in X_{J}, \exists n,m\in \mathbb{N}^{r}, \textrm{ with }m_{J}=n_{J},$$
$$\;\;\;\;\;x\in \textrm{dom}(T^{n}), y\in \textrm{dom}(T^{m}), T^{m}(x)=T^{n}(y)\}.$$
If $(x,z,y)\in R_{J}$
we have $z_{J}=0$ and $z_{J^{c}}=\sigma(x)_{J^{c}}-\sigma(y)_{J^{c}}$ (Conform Lemma \ref{middle}(ii))
so $z$ depends only on $x,y$ and
$R_{J}\subset X_{J}\times X_{J}$. We leave $z$ out when we have such an element. We show that $R_{J}$ is an equivalence relation.
If $(x,y),(y,z)\in R_{J}$ then we can find $n,n',m,m'\in \mathbb{N}^{r}$
such that $n_{J}=n'_{J}$, $T^{n}x=T^{n'}y$, $m_{J}=m'_{J}$, $T^{m}y=T^{m'}z$. As we have seen before, we can assume $n_{J^{c}}=\sigma(x)_{J^{c}}$,
$n'_{J^{c}}=\sigma(y)_{J^{c}}$, $m_{J^{c}}=\sigma(y)_{J^{c}}$, $m'_{J^{c}}=\sigma(z)_{J^{c}}$. (DC) gives again
$T^{n_{J}\vee m_{J}+\sigma(x)_{J^{c}}}x=T^{n_{J}\vee m_{J}+\sigma(y)_{J^{c}}}y=T^{n_{J}\vee m_{J}+\sigma(z)_{J^{c}}}z$
and then $(x,z)\in R_{J}$ which proves that $R_{J}$ is an equivalence relation.

We shall show that $R_{J}$ is an inductive limit of amenable equivalence relations. For $N\in \mathbb{N}^{J}$ let
$$R^{N}_{J}=\{(x,y)\in R_{J}: \exists n,m\in \mathbb{N}^{r} \textrm{ such that }n_{J}=m_{J}\leq N, T^{n}(x)=T^{m}(y)\}$$
In view of Lemma \ref{middle}(ii), we can choose $n$, $m$ such that $n_{J^{c}}=\sigma(x)_{J^{c}}$ and $m_{J^{c}}=\sigma(y)_{J^{c}}$ so $R^{N}_{J}$
can be described as
$$R^{N}_{J}=\{(x,y)\in R_{J}:\exists n\in \mathbb{N}^{J}, n\leq N, \textrm{ such that }T^{n+\sigma(x)_{J^{c}}}(x)=T^{n+\sigma(y)_{J^{c}}}(y)\}.$$
$R_{J}^{N}$ is an equivalence relation on $X_{J}$ since we have seen before that $T^{n_{J}\vee p_{J}+\sigma(x)_{J^{c}}}x=T^{n_{J}\vee p_{J}+\sigma(z)_{J^{c}}}z$
for $(x,y,m,n),(y,z,p,q)$ defining two elements in $R_{J}^{N}$ as above. $R_{J}=\cup_{N\in \mathbb{N}^{J}}R_{J}^{N}$, $(R_{J}^{N})^{0}=X_{J}=(R_{J})^{0}$ and
$R_{J}^{N}=R_{J}^{N+1}$. We shall show that $R_{J}^{N}$ is a proper equivalence relation. This ensures that $R_{J}$ is the inductive limit of $R_{J}^{N}$
in the sense of \cite{ar} Chapter 5.3.f. Then Prop. 5.3.37 of \cite{ar} gives the amenability of $R_{J}$.

Since $R_{J}^{N}$ is a discrete equivalence relation  we have to show that the space $X_{J}/R_{J}^{N}$
is analytic (Conform \cite{ar} Example 2.1.4(2)). According to \cite{ber} Corollary 4.12, this follows from the countable separability of $X_{J}/R_{J}^{N}$.
Since $\sigma$ is a Borel map, we can find a sequence $Y_{i}$ of Borel subsets of $X_{J}$ such that $Y_{i}\subset \{x\in X_{J}:\sigma(x)_{J^{c}}=k\}$
for some $k\in \mathbb{N}^{J^{c}}$ and
the restriction of $T^{n}$ to $Y_{i}$ is injective. We can suppose furthermore that $(Y_{i})_{i}$ is separating for $X_{J}$ and is closed under finite intersections.
The saturation of a Borel set $A\subset X_{J}$ is $[A]=\cup_{n\in \mathbb{N}^{J}, n\leq N}(T^{n+\sigma(\cdot)_{J^{c}}})^{-1}T^{n+\sigma(\cdot)_{J^{c}}}(A)$ which is a Borel set.
We use here the notation $T^{n+\sigma_{J^{c}}(\cdot)}$ for the Borel map $x\mapsto T^{n+\sigma_{J^{c}}(x)}(x)$.
Let $(x,y)\notin R_{J}^{N}, x\in Y_{i}$. We prove that we can choose $Y_{j}$ such that $y\notin [Y_{j}]$ so $\pi(Y_{j})$ separates $\pi(x)$ and $\pi(y)$ where
$\pi$ is the projection from $X_{J}$ to $X_{J}/R_{J}^{N}$. If $y\notin Y_{i}$ we are done. If $y\in [Y_{i}]$, there exists $y'\in Y_{i}$ such that $(y,y')\in R_{J}^{N}$.
Since the family $(Y_{i})_{i}$ separates $X_{J}$ and is closed under finite intersections, we can find another
set $Y_{j}\subset Y_{i}$ which separates $x$ and $y'$, that is $x\in Y_{j}$ and $y'\notin Y_{j}$. If $y\notin [Y_{j}]$ we are back to the case $y\notin [Y_{i}]$.
If $y\in [Y_{j}]$ then $(y,y'')\in R_{J}^{N}$ for some
$y''\in Y_{j}\subset Y_{i}$ so $(y',y'')\in R_{J}^{N}$. This means that there exists $n\in \mathbb{N}^{J}$, $n\leq N$ such that
$T^{n+\sigma(y')_{J^{c}}}(y')=T^{n+\sigma(y'')_{J^{c}}}(y'')$. Since $y',y''\in Y_{i}$ we have $\sigma(y')_{J^{c}}=\sigma(y'')_{J^{c}}$.
$T^{n+\sigma(\cdot)_{J^{c}}}$ is injective on $Y_{i}$ so $y'=y''$.
This contradicts the choice of $Y_{j}$ ($y'\notin Y_{j}$). Therefore, $y\notin [Y_{j}]$.
\end{proof}

\begin{cor}\label{coretogerms}
There is a morphism $\tilde{\pi}:C_{r}(G(X,T))\to C_{r}(Germ(X,T))$ induced by the canonical morphism $\pi:G(X,T)\to Germ(X,T)$. 
\end{cor}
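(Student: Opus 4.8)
The plan is to obtain the corollary by chaining together the three immediately preceding results, with amenability supplying the only nontrivial input. First I would invoke Proposition~\ref{germs}, which verifies that the pair $G_{1}=G(X,T)$ and $G_{2}=Germ(X,T)$, together with the canonical map $\pi$ of Lemma~\ref{lematogerms}, meets the hypotheses of Lemma~\ref{togerms}: the map $\pi$ is continuous, surjective, and a groupoid homomorphism, and composability of $\pi(x,z,y)$ with $\pi(x',z',y')$ holds exactly when $y=x'$, which is in turn the composability condition for $(x,z,y)$ and $(x',z',y')$ upstairs. Feeding this into Lemma~\ref{togerms} then produces a $*$-homomorphism $\tilde{\pi}:C^{*}(G(X,T))\to C^{*}_{r}(Germ(X,T))$ out of the \emph{full} $C^{*}$-algebra of the semidirect product groupoid.

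The remaining point is that the corollary asks for a morphism whose source is the \emph{reduced} algebra $C_{r}(G(X,T))$. This is exactly where Theorem~\ref{amenable} enters. By part (ii) of that theorem, $G(X,T)$ is amenable, so its full and reduced $C^{*}$-algebras coincide, $C^{*}(G(X,T))=C^{*}_{r}(G(X,T))$. Substituting this identification into the domain of $\tilde{\pi}$ immediately yields the desired morphism $\tilde{\pi}:C_{r}(G(X,T))\to C_{r}(Germ(X,T))$.

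I do not anticipate any genuine obstacle, since the substantive work has already been carried out: Lemma~\ref{togerms} constructs $\tilde{\pi}$ at the level of $C_{c}$ and bounds its $I$-norm so that it descends through the regular representation, Proposition~\ref{germs} checks the composability hypothesis, and Theorem~\ref{amenable} delivers amenability. The corollary is merely the composite statement. The one feature worth emphasising is that amenability is not decorative here: Lemma~\ref{togerms} on its own only lands a morphism out of the full algebra $C^{*}(G(X,T))$, and it is precisely the equality of the full and reduced algebras for the amenable groupoid $G(X,T)$ that allows the source to be rewritten as the reduced algebra appearing in the statement.
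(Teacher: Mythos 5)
Your proposal is correct and follows exactly the paper's own argument: Proposition~\ref{germs} feeds the groupoid pair into Lemma~\ref{togerms} to produce $\tilde{\pi}:C^{*}(G(X,T))\to C^{*}_{r}(Germ(X,T))$, and Theorem~\ref{amenable}(ii) identifies the full and reduced algebras of $G(X,T)$ so the source can be written as $C^{*}_{r}(G(X,T))$. Your observation that amenability is the essential (not decorative) ingredient for rewriting the domain is precisely the content of the paper's one-line proof.
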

\begin{proof}
Conform Proposition \ref{germs} there is a morphism $C^{*}_{r}(G(X,T))=C^{*}(G(X,T))\to C^{*}_{r}(Germ(X,T))$
\end{proof}

\section{Duality in bivariant theories} \label{duality}
$K$-theory and $K$-homology are particular cases of the Kasparov groups.
The product in $KK$-theory provides the analogue of the cup and cap products in algebraic topology. Therefore $KK$-theory and other bivariant theories are the framework
for notions of duality. $KK$-theory is suitable when geometric classes are available $Ext$-theory was used in \cite{kapu,em1,em2}. The novelty in \cite{pz} is to use $Ext^{r}$ but a technical condition of semisplitness leads to $E$-theory. We give first a general notion of duality analogous with the Spanier-Whitehead duality in algebraic topology. Then we give a condition which implies this duality. It appeared for the first time in \cite{ka} Chapter 4, Th. 6, but was highlighted by Connes in \cite{co} Chapter VI.4.$\beta$. After that we recall the construction of long exact sequences starting from an algebra and a tuple of ideals. 

Let $F$ be any of the bivariant theories: $KK$-theory, $E$-theory, Ext-theory or KExt-theory.
For $x\in F(A_{1}\otimes \ldots \otimes A_{n},B_{1}\otimes\ldots \otimes B_{m})$ we define the flip maps $\sigma_{ij}(x)$ and $\sigma^{ij}(x)$
the induced maps on $F$-groups by the flip homomorphisms of $C^{*}$-algebras
$A_{1}\otimes \ldots \otimes A_{i}\otimes \ldots \otimes A_{j} \otimes A_{n} \rightarrow A_{1}\otimes \ldots \otimes A_{j}\otimes\ldots \otimes A_{i}\otimes\ldots \otimes A_{n}$ respectively
$B_{1}\otimes \ldots \otimes B_{i}\otimes\ldots \otimes B_{j}\otimes\ldots \otimes B_{n} \rightarrow B_{1}\otimes \ldots \otimes B_{j}\otimes\ldots \otimes B_{i}\otimes\ldots \otimes B_{n}$.

\begin{defn}(Spanier-Whitehead duality, \cite{kapu}, Definition. 2.2)
Let $A,B$ be separable $C^{*}$-algebras, $\Delta \in F^{r}(A\otimes B, \mathbb{C})$, $\delta\in F^{r}(\mathbb{C}, A\otimes B)$.
We say that $A$ and $B$ are Spanier-Whitehead $r$-dual in $F$-theory if the maps
$\Delta_{i}:K_{i}(A)\rightarrow K^{i+r}(B)$, $\Delta_{i}(x)=x\otimes_{A}\Delta$ and
$\sigma^{12}(\Delta)_{i}:K_{i}(B)\rightarrow K^{i+r}(A)$, $\sigma^{12}(\Delta)_{i}(x)=x\otimes_{B}\Delta$ are isomorphisms with inverses
$\delta_{i}:K^{i+r}(B)\rightarrow K_{i}(A)$, $\delta_{i}(y)=\delta \otimes_{B} y$ respectively
$\sigma_{12}(\delta)_{i}:K^{i+r}(A)\rightarrow K_{i}(B)$, $\sigma_{12}(\delta)_{i}(y)=\sigma_{12}(\delta)\otimes_{A} y$.
\end{defn}
The duality classes, $\Delta$ and $\delta$, are the $K$-homology and the $K$-theory classes.
\begin{theorem}(\cite{em2} Theorem 11) \label{connesnotion}
Let $A,B,\Delta, \delta$ be $C^{*}$-algebras and classes as above such that the following equations hold:
$$\delta\otimes_{B}\sigma^{12}(\Delta)=[1_{A}]$$
$$\sigma_{12}(\delta)\otimes_{A}\Delta=[(-1)^{i}1_{B}]$$
Then the maps $\Delta_{i}$ and $\delta_{i}$ defined in the above definition are isomorphisms.
\end{theorem}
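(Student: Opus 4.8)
The plan is to show that $\delta_{i}$ is a two-sided inverse of $\Delta_{i}$; the corresponding statement for $\sigma^{12}(\Delta)_{i}$ and $\sigma_{12}(\delta)_{i}$ then follows by the symmetric argument obtained by interchanging the roles of $A$ and $B$ and of the pairs $(\Delta,\sigma^{12}(\Delta))$ and $(\delta,\sigma_{12}(\delta))$. The only structural facts I would invoke are the ones every bivariant theory $F$ enjoys: associativity of the intersection product $\otimes_{D}$, the interchange law $1_{D}\otimes(\alpha\otimes_{E}\beta)=(1_{D}\otimes\alpha)\otimes_{D\otimes E}(1_{D}\otimes\beta)$ relating external products with composition, the naturality of the flips $\sigma_{ij},\sigma^{ij}$ with respect to products, and the fact that $[1_{A}]$ and $[1_{B}]$ act as units. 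Here $K_{i}(A)=F^{i}(\mathbb{C},A)$ and $K^{i+r}(B)=F^{i+r}(B,\mathbb{C})$, and one reads the duality maps as $\Delta_{i}(x)=x\otimes_{A}\Delta=(x\otimes 1_{B})\otimes_{A\otimes B}\Delta$ and $\delta_{i}(y)=\delta\otimes_{B}y=\delta\otimes_{A\otimes B}(1_{A}\otimes y)$.

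First I would compute $\delta_{i}\circ\Delta_{i}$. For $x\in K_{i}(A)$ one starts from
$$\delta_{i}(\Delta_{i}(x))=\delta\otimes_{B}\big(x\otimes_{A}\Delta\big).$$
The heart of the argument is to slide the factor $x$, which involves only the algebra $A$, out of the product over $B$. Writing both products as external products followed by compositions and applying associativity together with the interchange law, one rewrites the right-hand side as
$$x\otimes_{A}\big(\delta\otimes_{B}\sigma^{12}(\Delta)\big),$$
where the flip $\sigma^{12}$ appears precisely to bring the two copies of $B$ into adjacent legs so that they contract, while leaving the copies of $A$ in the position that pairs against $x$. The first hypothesis $\delta\otimes_{B}\sigma^{12}(\Delta)=[1_{A}]$ and the unit property then give $x\otimes_{A}[1_{A}]=x$, so $\delta_{i}\circ\Delta_{i}=\id$.

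For the reverse composition on $y\in K^{i+r}(B)$ the mirror manipulation, now sliding the factor $y$ out of the product over $A$, gives
$$\Delta_{i}(\delta_{i}(y))=\big(\delta\otimes_{B}y\big)\otimes_{A}\Delta=y\otimes_{B}\big(\sigma_{12}(\delta)\otimes_{A}\Delta\big),$$
and the second hypothesis $\sigma_{12}(\delta)\otimes_{A}\Delta=[(-1)^{i}1_{B}]$ closes the argument. The main obstacle, and where I expect all the genuine work to sit, is the sign and flip bookkeeping in these two regroupings: commuting a homogeneous class of degree $i$ past the product produces a Koszul sign, and the factor $(-1)^{i}$ built into the second triangle identity is inserted exactly so that, combined with that sign, one recovers $\Delta_{i}\circ\delta_{i}=\id$ rather than a nonzero multiple of it. The asymmetry between the two identities, a clean unit in the first and a signed unit in the second, reflects precisely that the Koszul sign is trivial in the first regrouping but not in the second. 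Verifying that each flip lands on the correct tensor leg and that the gradings combine to the stated sign is routine in each of $KK$, $E$, $Ext$ and $KExt$, but must be checked with the conventions at hand, and is the only delicate point of the proof.
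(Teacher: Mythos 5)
The paper does not actually prove this theorem---it defers to \cite{em2}, Theorem 11---and the proof there is precisely the zig-zag argument you describe: regroup $\delta\otimes_{B}(x\otimes_{A}\Delta)$ as $x\otimes_{A}\bigl(\delta\otimes_{B}\sigma^{12}(\Delta)\bigr)$ via associativity of the intersection product and naturality of the flips, apply the first hypothesis, and treat $\Delta_{i}\circ\delta_{i}$ symmetrically with the second. Your outline is therefore correct and takes essentially the same route as the cited proof; the one point you leave implicit, the exact Koszul sign absorbed by the $(-1)^{i}$, is convention-dependent and is acknowledged as such by the paper itself (``the sign is not given in \cite{ka} and \cite{co}'').
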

The proof can be found in \cite{em2}. We shall call such algebras simply $r$-dual.
The sign in the second condition comes from the change of sign under flip maps $\sigma$.
The sign is not given in \cite{ka} and \cite{co}.
We do not know of any example of two dual $C^{*}$-algebras such that the assumptions of
Th. \ref{connesnotion} are not fulfilled. If $B$ is the same as $A$ we say that $A$ is a Poincar\'e duality algebra.
The dual is not unique since if $A$ has a dual $B$ and $A$ is $KK$-equivalent to $A'$ then $A'$ is also dual with $B$.

Bivariant theories work well when the algebra in the first argument is separable. For $KK$-theory this is forced by the Kasparov technical theorem. Since in the definition of
duality both algebras appear in the first argument we are forced to work with separable algebras. For a separable algebra the $K$-theory group is countable. For this reason
not every algebra has a dual. For example $M_{2^{\infty}}$, the CAR algebra, does not have a dual. If there was a dual $B$ for $M_{2^{\infty}}$ we would have
$K_{i}(B)=K^{1}(M_{2^{\infty}})$ where $i$ is 0 or 1. But $K^{1}(M_{2^{\infty}})$ is $\mathbb{Z}_{(2)}/\mathbb{Z}$ (\cite{had}), where $\mathbb{Z}_{(2)}$ is the group of 2-adic numbers,
which is uncountable. Therefore $B$ cannot be separable.

In \cite{pz} the $K$-homology class was given as an $E$-theory class associated to a long exact sequences. We recall the construction of long exact sequences starting from an algebra and a tuple of ideals. 

Let $\mathcal{E}$ be an arbitrary $C^{*}$-algebra and  $J_{1}, \ldots, J_{r}$ be $r$ ideals in it.
We shall describe next a procedure of defining an r-fold exact sequence
starting with $B=\bigcap_{i=1}^{r}J_{i}$ and ending with $A=\raisebox{3pt}{$\mathcal{E}$}\Big/ \raisebox{-4pt}{$J_{1}+\ldots+J_{r}$}$. The motivation comes from the Zekri's Yonneda product (\cite{ze}) $\epsilon_{1}\otimes_{\mathbb{C}}\epsilon_{2}$  of two 1-fold extensions $\epsilon_{1}\in \textup{Ext}(A_{1},B_{1})$ and $\epsilon_{2}\in \textup{Ext}(A_{2},B_{2})$.
Let
$$0\rightarrow B_{1}\rightarrow E_{1}\rightarrow A_{1}\rightarrow 0$$
$$0\rightarrow B_{2}\rightarrow E_{2}\rightarrow A_{2} \rightarrow 0$$
be the extensions $\epsilon_{1}$ and $\epsilon_{2}$. Forgetting for the moment the troubles caused by tensor products, the product $\epsilon_{1}\otimes_{\mathbb{C}}\epsilon_{2}$ is computed by tensoring
$\epsilon_{1}$ with $B_{2}$ on the right, $\epsilon_{2}$ with $A_{1}$ on the left and then splicing:
$$0\rightarrow B_{1}\otimes B_{2}\rightarrow E_{1}\otimes B_{2}\rightarrow A_{1}\otimes E_{2}\rightarrow A_{1}\otimes A_{2}\rightarrow 0$$
Define $\mathcal{E}=E_{1}\otimes E_{2}$,
$J_{1}=B_{1}\otimes E_{2}$, $J_{2}=E_{1}\otimes B_{2}$. We have
$$B_{1}\otimes B_{2}=J_{1}\cap J_{2}=J_{1}J_{2}$$
$$E_{1}\otimes B_{2}=J_{2}$$
$$A_{1}\otimes E_{2}=\raisebox{3pt}{$\mathcal{E}$}/\raisebox{-3pt}{$J_{1}$}$$
$$A_{1}\otimes A_{2}=\raisebox{3pt}{$\mathcal{E}$}/\raisebox{-3pt}{$J_{1}+J_{2}$}$$
so the exact sequence is
$$0\rightarrow J_{1}\cap J_{2}\rightarrow J_{2}\rightarrow \raisebox{3pt}{$\mathcal{E}$}/\raisebox{-3pt}{$J_{1}$}\rightarrow \raisebox{3pt}{$\mathcal{E}$}/\raisebox{-3pt}{$J_{1}+J_{2}$}\rightarrow 0$$
For a nonempty subset $S \subs \{1, \ldots , r\}$ let us define
$$
J^{S} = \bigcap_{j \in S} J_{j} \; \text{ and } \; J_{S} = \sum_{j \in S} J_{j}.
$$
We shall use the abbreviation $\{k,k+1,\ldots,l\} = \overline{k,l}$. Define
\begin{eqnarray*}
\mathcal{E}_{0}&=&J_{1}\cap \ldots \cap J_{r}= J_{\overline{1,r}} \\
\mathcal{E}_{1}&=&J_{2}\cap \ldots \cap J_{r}= J_{\overline{2,r}} \\
\mathcal{E}_{k}&=& \raisebox{3pt}{$J_{k+1}\cap \ldots \cap J_{r}$} \Big/ \raisebox{-4pt}{$(J_{1} + \ldots + J_{k-1}) \cap J_{k+1} \cap \ldots \cap J_{r}$} = \raisebox{3pt}{$J^{\overline{k+1,r}}$} \Big/ \raisebox{-4pt}{$J_{\overline{1,k-1}} \cap J^{\overline{k+1,r}}$},
\end{eqnarray*}
for any $k\in\{2,\ldots r-1\}$ and
\begin{eqnarray*}
\mathcal{E}_{r}&=& \raisebox{3pt}{$\mathcal{E}$} \Big/ \raisebox{-4pt}{$ J_{1} + \ldots + J_{r-1}$} = \raisebox{3pt}{$\Ee$} \Big/ \raisebox{-4pt}{$J^{\overline{1,r-1}}$}\\
\mathcal{E}_{r+1}&=& \raisebox{3pt}{$\mathcal{E}$} \Big/ \raisebox{-4pt}{$J_{1} + \ldots + J_{r}$} = \raisebox{3pt}{$\Ee$} \Big/ \raisebox{-4pt}{$J^{\overline{1,r}}$}.
\end{eqnarray*}
$\mathcal{E}_{0} \subseteq \mathcal{E}_{1}$ so we can define $i_{0}: \mathcal{E}_{0}\hookrightarrow \mathcal{E}_{1}$.
Since $\mathcal{E}_{1} \subseteq J_{3}\cap \ldots \cap J_{r}$, there is a map
$i_{1}:\mathcal{E}_{1}\rightarrow \mathcal{E}_{2}$ given by the inclusion composed with
the quotient map.
For $k\in\{2,\ldots,r-2\}$ we have
$$
J^{\overline{k+1,r}}=J_{k+1}\cap \ldots \cap J_{r}\subseteq J_{k+2}\cap \ldots \cap J_{r}=J^{\overline{k+2,r}}
$$
and
$$
J_{\overline{1,k-1}} \cap J^{\overline{k+1,r}} \subseteq J_{\overline{1,k}} \cap J^{\overline{k+2,r}}
$$
so that we can again define a map $i_{k}:\mathcal{E}_{k}\rightarrow\mathcal{E}_{k+1}$ as the bottom line
of the diagram
\begin{displaymath}
\xymatrix
{J^{\overline{k+1,r}}\ar@{^{(}->}[r] \ar[d]^{\pi_{k}} &J^{\overline{k+2,r}}\ar[d]^{\pi_{k+1}}\\
 \mathcal{E}_{k} \ar[r]^{i_{k}} & \mathcal{E}_{k+1}
}
\end{displaymath}
where the vertical arrows are projections.
Using the isomorphism $\raisebox{3pt}{$J$}\Big/\raisebox{-3pt}{$J\cap I$}\simeq \raisebox{3pt}{$J+I$}\Big/\raisebox{-3pt}{$I$}$ we write
$$\Ee_{r-1} =
\raisebox{3pt}{$J_r$} \Big/ \raisebox{-4pt}{$J_{\overline{1,r-2}}\cap J_{r}$}=
\raisebox{3pt}{$J_1+ \dots + J_{r-2} +J_r$} \Big/ \raisebox{-4pt}{$J_1+\dots + J_{r-2}$}$$
Therefore, there is also a natural homomorphism
$i_{r-1}:\mathcal{E}_{r-1}\rightarrow\mathcal{E}_{r}$ since $J_1+ \dots + J_{r-2} +J_r \subs \Ee$ and
$J_1+\dots + J_{r-2} \subs J_1+\dots + J_{r-1}$. Finally
$i_{r}:\mathcal{E}_{r}\rightarrow\mathcal{E}_{r+1}$ is defined since $J_1+\dots + J_{r-1} \subs J_1+ \dots + J_r$.
\begin{theorem} (\cite{pz}, Prop. 3.1)\label{constructionideals}
The r-fold sequence
$$
\xymatrix{0 \ar[r]&B\ar[r]^{i_0}&\mathcal{E}_{1}\ar[r]^{i_1}& \cdots \ar[r]^{i_{r-1}}&\mathcal{E}_{r} \ar[r]^{{i_r}}& A \ar[r]& 0}
$$
is exact.
\end{theorem}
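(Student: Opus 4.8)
My plan is to strip the statement down to a single lattice identity for closed ideals and then verify exactness term by term. First I would unify the notation by adopting the conventions that an empty sum of ideals is $0$ and an empty intersection is $\Ee$; with these conventions every term of the sequence can be written uniformly as
$$\mathcal{E}_{k} = \raisebox{3pt}{$J^{\overline{k+1,r}}$} \Big/ \raisebox{-4pt}{$J_{\overline{1,k-1}} \cap J^{\overline{k+1,r}}$}, \qquad 0 \le k \le r+1,$$
with $\mathcal{E}_{0}=B$ and $\mathcal{E}_{r+1}=A$, and each $i_{k}$ is the map induced on quotients by the inclusion $J^{\overline{k+1,r}} \subs J^{\overline{k+2,r}}$. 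This inclusion carries the denominator of $\mathcal{E}_{k}$ into that of $\mathcal{E}_{k+1}$ because $J_{\overline{1,k-1}} \subs J_{\overline{1,k}}$, so $i_{k}$ is well defined. In this form $i_{0}$ is visibly an inclusion and $i_{r}$ a quotient map, giving injectivity at $B$ and surjectivity at $A$ for free.

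The only genuine content is exactness at each $\mathcal{E}_{k}$ for $1 \le k \le r$, and this rests entirely on the distributive law for the lattice of closed two-sided ideals of a $C^{*}$-algebra: for closed ideals $P,Q,R$ one has $(P+Q)\cap R = (P\cap R)+(Q\cap R)$. I would prove this with an approximate unit. The inclusion $\supseteq$ is trivial; for $\subseteq$ one takes $x\in(P+Q)\cap R$, writes $x=p+q$ with $p\in P$ and $q\in Q$, and chooses an approximate unit $(e_{\lambda})$ of $R$. Then $xe_{\lambda}=pe_{\lambda}+qe_{\lambda}$ with $pe_{\lambda}\in P\cap R$ and $qe_{\lambda}\in Q\cap R$, and since $x\in R$ forces $xe_{\lambda}\to x$ while the sum of two closed ideals is again closed, the limit $x$ lies in $(P\cap R)+(Q\cap R)$.

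With this in hand I would compute kernel and image at $\mathcal{E}_{k}$ directly. As $J^{\overline{k+1,r}}\subs J^{\overline{k+2,r}}$, a class in $\mathcal{E}_{k}$ represented by $x\in J^{\overline{k+1,r}}$ lies in $\ker i_{k}$ exactly when $x\in J_{\overline{1,k}}\cap J^{\overline{k+2,r}}=J_{\overline{1,k}}\cap J^{\overline{k+1,r}}$, so
$$\ker i_{k} = \raisebox{3pt}{$J_{\overline{1,k}}\cap J^{\overline{k+1,r}}$} \Big/ \raisebox{-4pt}{$J_{\overline{1,k-1}}\cap J^{\overline{k+1,r}}$}.$$
On the other hand $\operatorname{im} i_{k-1}$ is the image of $J^{\overline{k,r}}$ in $\mathcal{E}_{k}$, i.e. the subquotient whose numerator is $J^{\overline{k,r}}+\big(J_{\overline{1,k-1}}\cap J^{\overline{k+1,r}}\big)$. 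Writing $J^{\overline{k,r}}=J_{k}\cap J^{\overline{k+1,r}}$ and applying the distributive law with $P=J_{k}$, $Q=J_{\overline{1,k-1}}$, $R=J^{\overline{k+1,r}}$ collapses this numerator to $(J_{k}+J_{\overline{1,k-1}})\cap J^{\overline{k+1,r}}=J_{\overline{1,k}}\cap J^{\overline{k+1,r}}$, matching the numerator of $\ker i_{k}$; the denominators already coincide, so $\operatorname{im} i_{k-1}=\ker i_{k}$. The degenerate cases check out: $k=1$ gives $\ker i_{1}=J_{1}\cap J^{\overline{2,r}}=B$, and $k=r$ gives $\ker i_{r}=J_{\overline{1,r}}/J_{\overline{1,r-1}}$, consistent with the two endpoints.

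I expect the main obstacle to be precisely the nontrivial inclusion $(P+Q)\cap R\subs(P\cap R)+(Q\cap R)$ in the distributive law: this fails in a general modular lattice, and the argument genuinely uses the $C^{*}$-structure through the approximate unit of $R$ together with the closedness of sums of ideals. Everything else is bookkeeping inside the ideal lattice, so once that identity is secured the theorem follows.
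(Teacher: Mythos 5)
Your argument is correct. Note that the paper itself gives no proof of this theorem --- it is quoted from \cite{pz}, Prop.\ 3.1 --- so there is nothing internal to compare against; your reduction to the distributive law $(P+Q)\cap R=(P\cap R)+(Q\cap R)$ for closed ideals, proved via an approximate unit of $R$ and the closedness of sums of closed ideals, is the standard route and is essentially the argument used in the cited source. The uniform description $\mathcal{E}_{k}=J^{\overline{k+1,r}}\big/\big(J_{\overline{1,k-1}}\cap J^{\overline{k+1,r}}\big)$ with the empty-sum/empty-intersection conventions, the identification $\ker i_{k}=\big(J_{\overline{1,k}}\cap J^{\overline{k+1,r}}\big)\big/\big(J_{\overline{1,k-1}}\cap J^{\overline{k+1,r}}\big)$, and the collapse of the numerator of $\operatorname{im}i_{k-1}$ via distributivity all check out, including the degenerate cases $k=1$ and $k=r$.
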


Such sequences define $Ext^{r}$ classes provided they are semisplit. In \cite{pz} this technical condition was avoided using $E$-theory, that is thinking of such a sequence as an $E^{r}$-theory class. However,
as a consequence of the fact that $\textup{Ext}^{1}(A,B)=\overline{\textup{Ext}}^{1}(A,B)$ is a group and the Yonneda product is bilinear, we can prove that $\textup{Ext}^{r}(A,B)$ is a group.
Indeed, we can decompose any $\epsilon \in \overline{\textup{Ext}^{r}}(A,B)$ as a Yonneda product $\gamma(\epsilon_{1},\epsilon_{2})$ with
$\epsilon_{1}\in \overline{\textup{Ext}}^{1}(A,A_{1})$. Since $\textup{Ext}^{1}(A,A_{1})$ is a group, we can find an extension, $\epsilon_{1}'\in \textup{Ext}^{1}(A,A_{1})$
such that $\epsilon_{1}\oplus \epsilon_{1}'$ is a null (i.e. split) extension in $\textup{ext}^{1}(A,A_{1})$.
The inverse of $\epsilon$ is now $\gamma(\epsilon_{1}',\epsilon_{2})$. As a consequence, if $A$ is nuclear then $\epsilon_{1}$ is always semisplit so
$\gamma(\epsilon_{1}',\epsilon_{2})$ provides an inverse for $\epsilon$ in $\overline{\textup{Ext}}^{r}(A,B)$. Therefore $\overline{\textup{Ext}}^{r}(A,B)$ is a group which
contains $\textup{Ext}^{r}(A,B)$. We conjecture that they are equal if $A$ is nuclear. More generally we can define a group if in the semisplitness condition we only
require that $\epsilon_{r}$, the rightmost 1-fold exact sequence, is semisplit. We believe that this is equal to $\textup{Ext}^{r}(A,B)$. In any case it factors through $KK$ as a consequence of abstract characterizations (\cite{cohi}).

For example if $G$ is a groupoid and $X_{1},... X_{r}$ are $r$ open invariant subsets of $G^{0}$ we get a tuple of ideals of $C_{r}^{*}(G)$ $(C_{r}^{*}(G\vert_{X_{1}}),C_{r}^{*}(G\vert_{X_{2}}),...C_{r}^{*}(G\vert_{X_{r}}))$. The algebras $\mathcal{E}_{k}$ corresponds to $C_{r}^{*}(G\vert_{Y_{k}})$ where
$$Y_{0}=X_{1}\cap ...\cap X_{r}$$
$$Y_{1}=X_{2}\cap ...\cap X_{r}$$
$$Y_{k}=(X_{k+1}\cap ...\cap X_{r})\setminus (X_{1}\cup ...\cup X_{k-1})$$
$$Y_{r}=X\setminus(X_{1}\cup ...\cup X_{r-1})$$
$$Y_{r+1}=X\setminus(X_{1}\cup ...\cup X_{r})$$
In particular, if $(X,T)$ is a MGDS with (DC) condition and $X_{j}$ are the set used in the proof of the amenability of $G(X,T)$ we have 
$$Y_{k}=\{x\in X: \sigma(x)_{j}<\infty \textrm{ for } j\geq k+1 \textrm{ and } \sigma(x)_{j}=\infty \textrm{ for } j\leq k-1\}$$

One can improve the result of \cite{pz} for any higher rank graph $\Lambda$ with the following finiteness condition:
$$
\textrm{(F): The set } \{\lambda:\sigma(\lambda)=n\} \textrm{ is nonvoid and finite for any } n\in\mathbb{N}^{r}
$$
such that $0<\#\Lambda_{n}(v)<\infty$ and $0<\#\Lambda^{n}(v)<\infty$ where $n\in \mathbb{N}^{r}$, $v\in \Lambda^{0}$. We shall give up the aperiodicity condition and replace the Toeplitz algebra $\mathcal{E}$ with the subalgebra $\tilde{\mathcal{E}}$ of $\mathcal{E}\oplus C^{*}(\Lambda)\otimes C^{*}(\Lambda^{op})$ generated by $\tilde{L}_{\lambda}=L_{\lambda}\oplus (s_{\lambda}\otimes 1)$ and $\tilde{R}_{\lambda}=R_{\lambda}\oplus (1\otimes t_{\lambda})$. We refer to $\mathcal{E}$ as the first summand of $\tilde{\mathcal{E}}$ and to $C^{*}(\Lambda)\otimes C^{*}(\Lambda^{op})$ as to the second summand. This trick will give rise to a kind of pull-back for long exact sequences. The constructions are the same as in \cite{pz} with $O_{\Lambda}$ replaced by $C^{*}(\Lambda)$ and $O_{\Lambda^{op}}$ replaced by $C^{*}(\Lambda^{op})$. In this way we avoid aperiodicity conditions. 

The closed linear span of a set $S$ in a normed linear space is denoted by $[S]$ and the projection onto
a closed subspace $\Ll$ of a Hilbert space $\Hh$ by $P_{\Ll}$.
For any $j \in \{1 \ldots r\}$ and $a \in \Lambda^{o}$ we define
$$
P_{a}=P_{[ \Omega, \delta_{\lambda} \mid  o(\lambda)=a]} \; , \;
P_{a}^{j}=P_{[\Omega, \delta_{\lambda} \mid  o(\lambda)=a \textrm{ and } \sigma(\lambda)_{j}=0 ]} \; ,
$$
$$
Q_{a}=P_{[ \Omega, \delta_{\lambda} \mid  t(\lambda)=a]} \; , \;
Q_{a}^{j}=P_{[\Omega, \delta_{\lambda} \mid  t(\lambda)=a \textrm{ and } \sigma(\lambda)_{j}=0]} \; ,
$$
$$
P^j = P_{[\Omega , \delta_{\lambda} \mid \sigma (\lambda)_{j}=0]}.
$$
Define $L_{\Omega} =1=R_{\Omega}$ and for $a \in \Lambda^{o}$ let $L_{a}$ be the projection
$P_{a}$ and $R_{a}$ the projection $Q_{a}$. 
It is easy to check that for any
$\mu\in \Lambda^{*},\  j \in \{1 , \ldots , r\}$ and $a \in \Lambda^{o}$ we have
$$
L_{\mu}^{*}L_{\mu}=P_{s(\mu)},\  P_{a}=L_{a}=
\sum_{t(\lambda)=a; \sigma(\lambda)=e_{j}}\quad L_{\lambda}L_{\lambda}^{*}+P_{a}^{j},
$$
$$
R_{\mu}^{*}R_{\mu}=Q_{t(\mu)},\quad  Q_{a}=R_{a}=
\sum_{t(\lambda)=a; \sigma(\lambda)=e_{j}} R_{\lambda}R_{\lambda}^{*}+Q_{a}^{j}.
$$
Note also that
$$
1- \sum_{\sigma(\lambda)=e_j} L_{\lambda} L_{\lambda}^{*} = P^j = 1- \sum_{\sigma(\lambda)=e_j} R_{\lambda} R_{\lambda}^{*}
$$
and for all $k \in \mathbb{N}^{r}$ we have
$$
\sum_{\sigma(\lambda)=k} L_{\lambda} L_{\lambda}^{*} = \sum_{\sigma(\lambda)=k} R_{\lambda} R_{\lambda}^{*} = P_{[\delta_{\mu} \mid \sigma (\mu) \geq k]}.
$$
We have similar relations in the algebra $\tilde{\mathcal{E}}$ if we replace $L$ with $\tilde{L}$ and $R$ with $\tilde{R}$:
$$\tilde{L}_{a}=
\sum_{t(\lambda)=a; \sigma(\lambda)=e_{j}}\quad \tilde{L}_{\lambda}\tilde{L}_{\lambda}^{*}+P_{a}^{j},
$$
$$
\tilde{R}_{a}=
\sum_{t(\lambda)=a; \sigma(\lambda)=e_{j}} \tilde{R}_{\lambda}\tilde{R}_{\lambda}^{*}+Q_{a}^{j}.
$$
$$
1- \sum_{\sigma(\lambda)=e_j} \tilde{L}_{\lambda} \tilde{L}_{\lambda}^{*} = P^j = 1- \sum_{\sigma(\lambda)=e_j} \tilde{R}_{\lambda} \tilde{R}_{\lambda}^{*}
$$
As in \cite{pz} the Toeplitz algebra $\mathcal{E}$ contains a tuple of $r$ ideals $(J_{1},...J_{r})$, $J_{j}=\langle P^{j} \rangle$ the closed two-sided ideals generated by $P^j$
in $\tilde{\mathcal{E}}$. Note that the ideals $J_{k}$ are in the first summand.

As in \cite{pz}, Lemma 2.1 we have $\bigcap_{j=1}^r J_{j} =\mathbb{K}(\mathsf{F})$ and as in \cite{pz} Theorem 2.2 there is a morphism $C^{*}(\Lambda)\otimes C^{*}(\Lambda^{op})
\simeq \tilde{\mathcal{E}}/J_{ \{ 1,\ldots,r \}}$ given by
$s_{\lambda}\otimes 1 \mapsto \widehat{\tilde{L}_{\lambda}}$ and
$1\otimes t_{\lambda} \mapsto \widehat{\tilde{R}_{\lambda}}$. The pull back construction, more exactly the second summand, is used to prove the injectivity of the above isomorphism which used the uniqueness of the Cuntz-Krieger relations associated to the graph. Indeed, since $J_{ \{ 1,\ldots,r \}}\subset \pi_{1}(\mathcal{E})$ we have the following diagram:
$$
\xymatrix
{
C^{*}(\Lambda)\otimes C^{*}(\Lambda^{op})\ar[r]\ar[dr]^{id}&\tilde{\mathcal{E}}/J_{ \{ 1,\ldots,r \} }\ar[d]\\
&C^{*}(\Lambda)\otimes C^{*}(\Lambda^{op})
}
$$
where the vertical arrow is the projection onto the second summand: $\mathcal{E}/J_{ \{ 1,\ldots,r \}}\ni \hat{x}\oplus y\mapsto y\in C^{*}(\Lambda)\otimes C^{*}(\Lambda^{op})$.
From the construction in section \ref{duality} we get a long exact sequence
$$0\rightarrow \mathbb{K}(\mathsf{F})\rightarrow \mathcal{E}_{1}\rightarrow\ldots\rightarrow \mathcal{E}_{r}\rightarrow C^{*}(\Lambda)\otimes C^{*}(\Lambda^{op})\rightarrow 0$$
therefore an $E$-theory class $\Delta \in E^{r}(C^{*}(\Lambda)\otimes C^{*}(\Lambda^{op}), \mathbb{C})=K^{r}(C^{*}(\Lambda)\otimes C^{*}(\Lambda^{op}))$. To compute the product $\delta\otimes_{C^{*}(\Lambda^{op})}\sigma_{12}(\Delta)=\tau_{C^{*}(\Lambda)}(\delta)\otimes \tau^{C^{*}(\Lambda)}(\sigma_{12}(\Delta))$, which is a class in
$E^{0}(\mathcal{S}^{\otimes r}\otimes C^{*}(\Lambda),\mathcal{S}^{\otimes r}\otimes C^{*}(\Lambda))$, we find $[\Theta]\otimes\tau_{C^{*}(\Lambda)}(\delta)\otimes \tau^{C^{*}(\Lambda)}(\sigma_{12}(\Delta))
=[\tau_{C^{*}(\Lambda)}(\delta)\circ\Theta]\otimes \tau^{C^{*}(\Lambda)}(\sigma_{12}(\Delta))$ where $\Theta: \mathcal{S}^{\otimes r}\otimes C^{*}(\Lambda)\to \mathcal{S}^{\otimes r}\otimes C^{*}(\Lambda)$. This is defined by  restricting the morphism $\Theta:C(\mathbb{T}^{r})\otimes C^{*}(\Lambda)\longrightarrow C(\mathbb{T}^{r})\otimes C^{*}(\Lambda)$,
$\Theta(f)(z)=\gamma_{z}(f(z))$, $\gamma$ the gauge action, to  $\mathcal{S}^{\otimes r}\otimes C^{*}(\Lambda)$. This restriction is in turn homotopic to the identity. Our product is a pull-back (\cite{ght} Prop 5.8), that is the top row of the following diagram:
$$
\xymatrix
{
0 \ar[r]& C^{*}(\Lambda)\otimes\mathcal{E}_{0} \ar[r] \ar@{=}[dd] & \mathcal{E}_{1}' \ar[r] & \dots \ar[r] & \mathcal{E}_{r}' \ar[r] &\\
        &                                                  &                         &              &                        & \\
0 \ar[r]& C^{*}(\Lambda)\otimes\mathcal{E}_{0} \ar[r]         & C^{*}(\Lambda)\otimes\mathcal{E}_{1} \ar[r]  & \dots \ar[r] & C^{*}(\Lambda)\otimes\mathcal{E}_{r} \ar[r] &
}
$$
$$
\;\;\;\;\;\;\;\;
\xymatrix{
&\ar[r]& C(\mathbb{T}^{r})\otimes C^{*}(\Lambda) \ar[r] \ar[d]^{\Theta}      & 0 \\
&&C(\mathbb{T}^{r})\otimes C^{*}(\Lambda) \ar[d]^{\delta \otimes id}      \\
&\ar[r]&C^{*}(\Lambda)\otimes C^{*}(\Lambda^{op})\otimes C^{*}(\Lambda) \ar[r]      & 0
}
$$

To show that it gives the same element with 
$\tau_{C^{*}(\Lambda)}(\mathcal{T}^{\otimes r})\in E^{r}(\mathcal{S}^{\otimes r}\otimes C^{*}(\Lambda),C^{*}(\Lambda))$ (which is $1_{C^{*}(\Lambda)}\in E(C^{*}(\Lambda),C^{*}(\Lambda))$by Bott periodicity) we shall identify a subsequence which gives $\tau_{C^{*}(\Lambda)}(\mathcal{T}^{\otimes r})$. This is done by identifying in $C^{*}(\Lambda)\otimes\tilde{\mathcal{E}}$ an image $\mathcal{E}'$ of the algebra $\mathcal{T}^{\otimes r}\otimes C^{*}(\Lambda)$ which preserves the canonical tuple of ideals of $\mathcal{T}^{\otimes r}\otimes C^{*}(\Lambda)$. Therefore it gives a commutative diagram

$$
\xymatrix{
0\ar[r] & C^{*}(\Lambda)\otimes\mathcal{E}_{0} \ar[r] & \mathcal{E}_{1}' \ar[r]  & \cdots \ar[r] & \mathcal{E}_{r}'\longrightarrow  \\
0\ar[r] & \mathcal{T}_{0}\otimes C^{*}(\Lambda) \ar[r] \ar[u]& \mathcal{T}_{1}\otimes C^{*}(\Lambda) \ar[r] \ar[u]& \cdots \ar[r] & \mathcal{T}_{r}\otimes C^{*}(\Lambda) \longrightarrow \ar[u] 
}
$$

$$
\;\;\;\;\;
\xymatrix{
&\ar[r]&C(\mathbb{T}^{r})\otimes C^{*}(\Lambda)  \ar[r] \ar@{=}[d] & 0 \\
&\ar[r]&C(\mathbb{T}^{r})\otimes C^{*}(\Lambda) \ar[r] &0
}
$$
In addition, the leftmost vertical arrow gives an $E$-theoretic equivalence since the full corner $C^{*}(\Lambda)\otimes P_{\Omega}\subset C^{*}(\Lambda)\otimes\mathcal{E}_{0}$ corresponds to the full corner $P_{0}\otimes C^{*}(\Lambda)\subset \mathcal{T}_{0}\otimes C^{*}(\Lambda)$ are . 
To construct the algebra $\mathcal{E}'$ we define 
$$W_{j}=\sum_{\sigma(\lambda)=e_{j}}s^{*}_{\lambda}\otimes \tilde{R}_{\lambda}$$
$$V_{\lambda}=\big[ W^{\sigma(\lambda)} \big]^{*}(1\otimes \tilde{L}_{\lambda})$$
where for a tuple of $r$ commuting operators $x=(x_{1},...x_{r})$ and $k\in \mathbb{N}^{r}$ we define $x^{k}=x_{1}^{k_{1}}..x_{r}^{k_{r}}$. One can prove as in \cite{pz} Proposition 6.4 that $C^{*}(\Lambda)\simeq C^{*}(V_{\lambda} \mid \lambda\in \Lambda^{*})$, the isomorphism being given by
$s_{\lambda}\mapsto V_{\lambda}$. The pull-back construction replaces the uniqueness of $O_{\Lambda}$ invoked in \cite{pz} Proposition 6.4 and gives the injectivity. 
The isometries $(W_{k})_{k}$ commute with each other and with $V_{\lambda}$'s. So we have a morphism $\mathcal{T}^{\otimes r}\otimes C^{*}(\Lambda) \to C^{*}(W_{k}, V_{\lambda})$ which is the identity on the full corner $P_{\Omega}\otimes C^{*}(\Lambda)$. All these statements follow exactly as in \cite{pz}.

\section{A Groupoid Picture of the duality for higher rank graph algebra}

In this section we give this duality a groupoid interpretation. 
Let us start with the $K$-theory class. More precisely, from the groupoid picture of the algebra $C^{*}(\Lambda)\otimes C^{*}(\Lambda)$ the partial unitaries $w_{k}$ are two-sided shifts on the double infinite paths space. Indeed, conform \cite{pz}, Proposition 4.1
$$w_{k}w_{k}^{*}=w_{k}^{*}w_{k}=\sum_{a\in \Lambda^{o}}s_{a}\otimes t_{a}$$
In the groupoid $G(\Lambda^{\infty},T)\times G((\Lambda^{op})^{\infty},T^{op})$, this projection corresponds to the set $1_{Z}$ where 
$$Z=\{(x,y)\in\Lambda^{\infty}\times(\Lambda^{op})^{\infty};s(x)=s(y)\}$$
The isometry $w_{k}$ is given by the bisection
$$\sum_{\sigma(\lambda)=e_{k}}1_{\{((\lambda x,y),(e_{k},-e_{k}),(x,\lambda y))\} }$$
$$
\xy
{\ar (0,0)*{}; (20,0)*{}};
{\ar (20,-20)*{}; (20,0)*{}};
(20,3)*{t(\lambda)};
{\ar (25,20)*{}; (25,0)*{}};
{\ar (45,0)*{}; (25,0)*{}};
{\ar@{.>} (30,20)*{}; (30,0)*{}};
(27,-3)*{\lambda};
(52,0)*{(e_{k},-e_{k})};
(10,-10)*{y};
(37,10)*{x};
 {\ar (60,0)*{}; (80,0)*{}};
 {\ar (80,-20)*{}; (80,0)*{}};
 {\ar@{.>} (75,-20)*{}; (75,0)*{}};
 (77,3)*{\lambda};
 (65,-10)*{y};
 (90,10)*{x};
 {\ar (85,20)*{}; (85,0)*{}};
 (85,-3)*{s(\lambda)};
 {\ar (105,0)*{}; (85,0)*{}};
\endxy
$$
Identifying the set $Z$ with the set of two-sided infinite words we can say that $w_{k}$ is the two sided shift in the direction $k$. The transformation group of the $n$ shifts on $Z$, $Z\rtimes \mathbb{Z}^{n}$ is an open subgroupoid of $G(\Lambda^{\infty},T)\times G((\Lambda^{op})^{\infty},T^{op})$. 
Note that here one can see clearly the necessity of the condition (F) on $\Lambda$. Without it the space $Z$ may not be locally compact or the two-sided shifts on the double infinite path space may not be well defined on $Z$ (when the graph has sinks or sources).
The morphism $C(\mathbb{T}^{r})\rightarrow C^{*}(\Lambda)\otimes C^{*}(\Lambda^{op})$, $z_{k}\mapsto w_{k}$ is given by the inclusion of $C^{*}(\mathbb{Z}^{r})\to C(Z)\ltimes \mathbb{Z}^{r}$.
The twist morphism $\Theta$ is given by the topological isomorphism of groupoids
$$\mathbb{Z}^{r}\times G(\Lambda^{\infty},T)\ni (z,x)\mapsto (z+b(x),x)\in\mathbb{Z}^{r}\times G(\Lambda^{\infty},T)$$
where $b$ is the cocycle which gives the gauge action: $b(x,z,y)=z$. 

Identifying $\mathcal{S}^{\otimes r}$ with $C^{*}(\mathbb{R}^{r})$ the restriction of this morphism to $\mathcal{S}^{\otimes r}\otimes C^{*}(\Lambda)$ is given by 
$$\mathbb{R}^{r}\times G(\Lambda^{\infty},T)\ni (t,x)\mapsto (t+b(x),x)\in\mathbb{R}^{r}\times G(\Lambda^{\infty},T).$$
The homotopy between this morphism and the identity is obtained by multiplying $b(x)$ with a parameter $s\in [0,1]$.

To give a groupoid description of the $K$-homology class it would be ideal to have a conceptual  groupoid description (like semidirect product) of the two-sided Toeplitz algebra $\mathcal{E}$. We have not been able to do it but we can still use a groupoid of germs. Let  $X_{l}$, $X_{r}$, $X$ be the Gelfand spectrum of the commutative unital algebras $\mathcal{E}_{l}\cap l^{\infty}(\overline{\Lambda})$, $\mathcal{E}_{r}\cap l^{\infty}(\overline{\Lambda})$ respectively $\mathcal{E}\cap l^{\infty}(\overline{\Lambda})$ where $l^{\infty}(\Lambda)\subset\mathbb{L}(l^{2}(\overline{\Lambda}))$ is the algebra of diagonal operators. There is an isomorphism from $L_{\lambda}^{*}L_{\lambda}C(X)L_{\lambda}^{*}L_{\lambda}$ to $L_{\lambda}L_{\lambda}^{*}C(X)L_{\lambda}L_{\lambda}^{*}$ given by $f\mapsto L_{\lambda}f L_{\lambda}^{*}$. Therefore the isometries $\{L_{\lambda}:\lambda\in\Lambda\}$ give rise by Gelfand duality to partial homeomorphisms of $X_{l}$, so they generate a pseudogroup $\mathcal{G}_{l}$. Similarly,  $\{R_{\lambda}:\lambda\in\Lambda\}$ give rise to partial homeomorphisms of $X_{r}$ and they generate a pseudogroup $\mathcal{G}_{r}$. The set of partial isometries $\{L_{\lambda}, R_{\lambda}:\lambda\in\Lambda\}$ gives also partial homeomorphisms $\{l_{\lambda}, r_{\lambda}:\lambda\in\Lambda\}$ of $X$ which generate a pseudogroup $\mathcal{G}$. We denote by $G_{l}$, $G_{r}$, $G$ the corresponding groupoids of germs. 
It is important to have in mind that the maps $l_{\lambda}$ and $r_{\lambda}$ extend the maps $s(\lambda)\Lambda\ni x \mapsto \lambda x\in \lambda\Lambda$ respectively $\Lambda t(\lambda)\ni x\mapsto x\lambda\in \Lambda\lambda$ so we shall freely use the notation $\lambda x=l_{\lambda}x$ and $y\lambda=r_{\lambda}y$ whenever $x \in dom(l_{\lambda})$ and $y\in dom(r_{\lambda})$. Here our convention is that $\Omega x=x$ for any $x\in X$.

$X_{l}$ is easily described since the algebra $\mathcal{E}_{l}\cap l^{\infty}(\overline{\Lambda})$ is generated by the projections $L_{\lambda}L_{\lambda}^{*}$. It is the space appearing in Example \ref{example}(i) \cite{fmy}. $X_{r}$ is the same space when we replace $\Lambda$ with $\Lambda^{op}$. However, the space $X$ seems to be much more complicated. The difficulty appears from the projections $P_{j}L_{\lambda}^{*}R_{\mu}R_{\mu}^{*}L_{\mu}P_{j}$ with $\sigma(\lambda)=\sigma(\mu)=e_{j}$ which are not in the algebra generated by the projections $L_{\lambda}L_{\lambda}^{*}$ and $R_{\lambda}R_{\lambda}^{*}$
This projection is the domain of the partial isometry $R_{\mu}^{*}L_{\mu}P_{j}$.
$$
\xy
 {\ar@{.>} (30,50)*{}; (0,50)*{}};
 {\ar (30,50)*{}; (30,0)*{}};
 {\ar (0,50)*{}; (0,0)*{}};
 {\ar@{.>} (30,0)*{}; (0,0)*{}};
 (15,-3)*{\lambda};
 (15,53)*{\mu};
 (15,28)*{R_{\mu}^{*}L_{\lambda}};
 {\ar@{-->}(30,25)*{};(0,25)*{}};
\endxy
$$
We do not know something like shift equivalence to describe this isometry.
This does not happen in the rank one case since left and right creations commute up to compact operators. In the rank one case the set $X$ is the disjoint union $\overline{\Lambda}\bigsqcup (\Lambda^{\infty}\times(\Lambda^{op})^{\infty})$.

\begin{lemma}
\begin{enumerate}
 \item 
The set $\overline{\Lambda}$ is an open dense discrete subset of $X$ ($X$ is a compactification of $\overline{\Lambda}$)
\item
The algebra $\mathcal{E}$ is the image of the canonical representation $\pi$ of $C_{r}^{*}(G)$ on $l^{2}(\overline{\Lambda})=\mathsf{F}$
\end{enumerate}
\end{lemma}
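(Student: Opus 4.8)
The plan is to handle the two parts separately: for (i) I realize $\overline{\Lambda}$ as the isolated points of $X$ by producing the minimal (rank-one) projections of the diagonal algebra $D=\mathcal{E}\cap l^{\infty}(\overline{\Lambda})$, and for (ii) I identify the canonical representation with the regular representation of $G$ based at the orbit of $\Omega$. For (i), the point is that condition (F) makes each $P^{j}=1-\sum_{\sigma(\lambda)=e_{j}}L_{\lambda}L_{\lambda}^{*}$ a \emph{finite} sum, hence an element of $\mathcal{E}$; being diagonal it lies in $D$. Since every $\nu\in\Lambda^{*}$ has nonzero shape, $\prod_{j=1}^{r}P^{j}$ is the rank-one projection onto $\delta_{\Omega}$, and more generally $E_{\mu}:=L_{\mu}\bigl(\prod_{j=1}^{r}P^{j}\bigr)L_{\mu}^{*}$ is the rank-one projection onto $\delta_{\mu}$ for every $\mu\in\overline{\Lambda}$ (with $L_{\Omega}=1$). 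First I would verify this on basis vectors: $L_{\mu}^{*}\delta_{\nu}=\delta_{\rho}$ when $\nu=\mu\rho$ and $0$ otherwise, while $\prod_{j}P^{j}\delta_{\rho}=\delta_{\rho}$ exactly when $\sigma(\rho)=0$, i.e. $\rho=\Omega$. The family $\{E_{\mu}\}$ then consists of mutually orthogonal minimal projections of $D$, so each $\mu$ determines an isolated clopen point of $X=\widehat{D}$; this yields the injective map $\overline{\Lambda}\hookrightarrow X$ with open discrete image. Density is routine: a function $f\in C(X)=D$ vanishing on the image has all diagonal entries $f(\mu)=\mu(f)$ equal to zero, hence $f=0$, so no nonempty open set avoids $\overline{\Lambda}$. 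As $D$ is unital, $X$ is compact Hausdorff, so it is a compactification of the discrete set $\overline{\Lambda}$.

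For (ii), I would first observe that the generating germs act on $\overline{\Lambda}\subseteq X$ precisely by $l_{\lambda}\colon\mu\mapsto\lambda\mu$ and $r_{\lambda}\colon\mu\mapsto\mu\lambda$ where defined, so $\overline{\Lambda}$ is a single $G$-orbit, namely the orbit of $\Omega$ (reached via $l_{\mu}(\Omega)=\mu$). Because each $\mu$ is isolated in $X$, any germ based at $\mu$ is determined by its value there, so the isotropy over $\overline{\Lambda}$ is trivial and $G$ is principal on this orbit; consequently the range map $r\colon G_{\Omega}\to\overline{\Lambda}$ is a bijection, identifying $l^{2}(G_{\Omega})$ with $\mathsf{F}=l^{2}(\overline{\Lambda})$. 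Under this identification the regular representation based at $\Omega$ is the canonical representation $\pi$. A direct computation of the groupoid convolution then gives $\pi(1_{[l_{\lambda}]})=L_{\lambda}$ and $\pi(1_{[r_{\lambda}]})=R_{\lambda}$ on the indicators of the germ bisections, while $\pi$ is the identity on $C(X)=D$ (a function acts diagonally by its values $f(\mu)=d_{\mu\mu}$). Since the bisections of $l_{\lambda}$ and $r_{\lambda}$ together with $C(X)$ generate $C_{r}^{*}(G)$, the image $\pi(C_{r}^{*}(G))$ is the $C^{*}$-algebra generated by $\{L_{\lambda},R_{\lambda}\}$ together with $D$, which is $\mathcal{E}$, because $D\subseteq\mathcal{E}=C^{*}(L_{\lambda},R_{\lambda})$.

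The main obstacle I anticipate is making the identification of the canonical representation fully rigorous: one must check that the germ groupoid is genuinely principal over the isolated points $\overline{\Lambda}$, so that $G_{\Omega}\cong\overline{\Lambda}$ and the regular representation lands on $l^{2}(\overline{\Lambda})$, and that this representation, bounded on $C_{r}^{*}(G)$, matches the groupoid convolution product of the germ bisections with the operator products of $L_{\lambda},R_{\lambda}$ on the Fock space. A secondary but essential subtlety, entirely dependent on (F), is that without the finiteness condition the projections $P^{j}$ need not belong to $\mathcal{E}$, so the minimal projections $E_{\mu}$ and hence the discreteness of $\overline{\Lambda}$ in $X$ would fail.
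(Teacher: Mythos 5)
Your proof is correct and follows essentially the same route as the paper: exhibit $\overline{\Lambda}$ as an open, dense, discrete, invariant subset of $X$ over which the germ groupoid reduces to the transitive principal groupoid, so that $C_{r}^{*}(G\vert_{\overline{\Lambda}})=\mathbb{K}(\mathsf{F})$ sits as an ideal, then extend to the canonical representation on $l^{2}(\overline{\Lambda})$ and evaluate it on $C(X)$ and on the generating bisections to land exactly on $\mathcal{E}$. The only difference is that you make explicit two points the paper leaves implicit --- the construction of the rank-one projections $E_{\mu}$ from condition (F) (the paper instead cites $\mathbb{K}(\mathsf{F})=\bigcap_{j}J_{j}\subseteq\mathcal{E}$) and the triviality of the isotropy over the isolated points --- which is added detail rather than a change of method.
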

\begin{proof}
 \begin{enumerate}
\item  $\mathcal{E}$ contains the ideal of compact operators $\mathbb{K}(\mathsf{F})$ so $\mathcal{E}\cap l^{\infty}(\overline{\Lambda})$ contains the diagonal algebra $\mathbb{K}(\mathsf{F})\cap l^{\infty}(\overline{\Lambda})=C_{0}(\overline{\Lambda})$ as an essential ideal. Therefore $X$ contains the discrete set $\overline{\Lambda}$ as a dense open subset. $X$ is compact since it is the spectrum of an unital algebra.
\item
The set $\overline{\Lambda}$ is invariant with respect to the partial homeomorphisms $l_{\lambda}, r_{\lambda}$ so it is invariant for the groupoid $G$. It is also open so it gives rise to an ideal of $C_{r}^{*}(G)$. Since the reduction $G\vert_{\overline{\Lambda}}$ is the transitive groupoid, this ideal is the ideal of compact operators. 
We have a representation $\pi$ of $C_{r}^{*}(G)$ on $l^{2}(\overline{\Lambda})$ induced by the canonical representation of the ideal $\mathbb{K}(l^{2}(\overline{\Lambda}))$: $\pi(T)\delta_{\lambda}=\pi(TP_{[\delta_{\lambda}]})\delta_{\lambda}$, $T\in C_{r}^{*}(G)$. The image $\pi(C_{r}^{*}(G))$ in $\mathbb{L}(l^{2}(\overline{\Lambda}))$ is exactly the algebra $\mathcal{E}$. Indeed, a basis for the topology of the groupoid $G$ is given by equivalence classes (modulo passing to germs) of $(Y,w)$ where $1_Y$ is a projection in $C(X)=\mathcal{E}\cap l^{\infty}(\overline{\Lambda})$ and $v$ is a finite word in $l_{\lambda}, l_{\lambda}^{-1}, r_{\lambda}, r_{\lambda}^{-1}$ such that $v^{-1}$ is a homeomorphism on $Y$. Therefore $\pi(1_{(Y,w)})=1_{Y}w\in \mathcal{E}$, where $w$ is the word in $L_{\lambda}, L_{\lambda}^{*}, R_{\lambda}, R_{\lambda}^{*}$ obtained by replacing $l$ and $r$ with $L$ and $R$. Therefore, the image of the representation $\pi$ is contained in $\mathcal{E}$ and it is clear that $L_{\lambda}, R_{\lambda}$ are in this image so $\pi(C_{r}^{*}(G))=\mathcal{E}$.
\end{enumerate}
\end{proof}

We want now to identify the morphism $\mathcal{T}^{\otimes r}\otimes C^{*}(\Lambda)\to \mathcal{E}\otimes C^{*}(\Lambda)$, $S_{i}\to W_{i}$ and $s_{\lambda}\to V_{\lambda}$. In the following proposition we give the main properties of $X$. The continuous map $\sigma$ defined on $X_{l}$ and $X_{r}$ is the extended shape given in Example \ref{example}(i).

\begin{prop} \label{mainprop}
\begin{enumerate}
\item 
There are canonical continuous surjective maps $p_{l}$, $p_{r}$ and $\sigma$ such that the following diagram is commutative
$$
\xymatrix
{
&X_{l}\ar[dr]^{\sigma}&\\
X\ar[ur]^{p_{l}}\ar[dr]_{p_{r}}&&\overline{\mathbb{N}}^{r}\\
&X_{r}\ar[ur]_{\sigma}&
}
$$
where $p_{l}(\lambda)=p_{r}(\lambda)=\lambda$ for any $\lambda\in\overline{\Lambda}$ and $\sigma$ extends the shape on $\overline{\Lambda}$.
\item
The source and terminal maps $s$ and $t$ on $\Lambda$ extends to continuous maps on $X$.
\item
$X$ has the following factorization property: if $x\in X,\sigma(x)\geq k+p$, $k,p\in \mathbb{N}^{r}$ then there exists unique $\lambda, \mu\in \Lambda$ with $\sigma(\lambda)=k$, $\sigma(\mu)=p$ and $y\in X$ with $\sigma(y)=\sigma(x)-k-p$ such that $x=\lambda y\mu$.
\item
$J_{k}=\pi(C_{r}^{*}(G\vert_{X_{k}}))$ where $X_{k}=\{x\in X:\sigma(x)_{k}<\infty\}$
\item
Let $Z=\{(x,y)\in X\times \Lambda^{\infty}: s(x)=t(y)\}$. Then the map
$$\phi=(\sigma,\psi):( \Lambda\times\Lambda^{\infty})\cap Z\ni (x,y)\mapsto (\sigma(x),xy)\in \overline{\mathbb{N}}^{n}\times \Lambda^{\infty}$$
extends to a continuous surjective map $\phi:Z\to \overline{\mathbb{N}}^{n}\times \Lambda^{\infty}$
\end{enumerate}
\end{prop}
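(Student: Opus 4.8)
The unifying idea is to read off each statement from the relations among the diagonal projections of $\mathcal{E}\cap l^{\infty}(\overline{\Lambda})$ and of its subalgebras through Gelfand duality, exploiting the previous lemma (that $\overline{\Lambda}$ is dense and open in $X$). The technical principle I would invoke repeatedly is that two projections in $C(X)=\mathcal{E}\cap l^{\infty}(\overline{\Lambda})$, equivalently two clopen subsets of $X$, coincide as soon as the corresponding continuous functions agree on the dense set $\overline{\Lambda}$; together with the continuity of $\sigma$ this promotes combinatorial identities on $\overline{\Lambda}$ to genuine identities of clopen sets on all of $X$. For (i), the inclusions $\mathcal{E}_{l}\cap l^{\infty}(\overline{\Lambda})\subseteq \mathcal{E}\cap l^{\infty}(\overline{\Lambda})\supseteq \mathcal{E}_{r}\cap l^{\infty}(\overline{\Lambda})$ of commutative unital $C^{*}$-algebras induce, by restriction of characters, continuous surjections $p_{l}\colon X\to X_{l}$ and $p_{r}\colon X\to X_{r}$; surjectivity holds because a character of the subalgebra extends to a pure state of the larger commutative algebra, which is again a character. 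Composing with the continuous shape of Example \ref{example}(i), I check that $\sigma\circ p_{l}$ and $\sigma\circ p_{r}$ agree on $\overline{\Lambda}$ (both return the shape of a finite path, shapes being preserved under reversal) and hence everywhere; this common map is the desired $\sigma$ on $X$. For (ii), since the vertex set $\Lambda^{o}$ is finite, the finitely many diagonal projections $P_{a},Q_{a}$ lie in $\mathcal{E}\cap l^{\infty}(\overline{\Lambda})$ and give finite clopen partitions of $X$, which define continuous maps $s,t\colon X\to\Lambda^{o}$ (with $\Lambda^{o}$ discrete) restricting to the source and range on $\overline{\Lambda}$.

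The factorization property (iii) is the structural heart. From $\sum_{\sigma(\lambda)=k}L_{\lambda}L_{\lambda}^{*}=P_{[\delta_{\mu}\mid \sigma(\mu)\geq k]}$ and the continuity of $\sigma$ I first obtain the disjoint clopen decomposition $\{x:\sigma(x)\geq k\}=\bigsqcup_{\sigma(\lambda)=k}\mathrm{ran}(l_{\lambda})$, the summands being disjoint because the $L_{\lambda}L_{\lambda}^{*}$ are mutually orthogonal. Hence for $\sigma(x)\geq k$ there is a unique $\lambda$ with $x\in\mathrm{ran}(l_{\lambda})$, and $l_{\lambda}^{-1}(x)$ is defined with $\sigma(l_{\lambda}^{-1}(x))=\sigma(x)-k$, since $l_{\lambda}$ raises the shape by $k=\sigma(\lambda)$. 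Writing $x=\lambda x_{1}$, I apply the same argument on the right with the $r_{\mu}$ to split off the unique $\mu$ of shape $p$ from $x_{1}$ (possible because $\sigma(x_{1})=\sigma(x)-k\geq p$), producing $x=\lambda y\mu$ with $\sigma(y)=\sigma(x)-k-p$; uniqueness is inherited from the uniqueness at each step.

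For (iv) I identify $P^{k}=1-\sum_{\sigma(\lambda)=e_{k}}L_{\lambda}L_{\lambda}^{*}$ with the characteristic function of the clopen set $\{\sigma_{k}=0\}$. The set $X_{k}=\{\sigma_{k}<\infty\}$ is open and $G$-invariant, because germs of the $l_{\lambda},r_{\mu}$ alter $\sigma_{k}$ only by the finite amounts $\sigma(\lambda)_{k},\sigma(\mu)_{k}$; moreover, using (iii) to strip off the left factor of shape $\sigma(x)_{k}\,e_{k}$ shows that every point of $X_{k}$ is $G$-equivalent to one in $\{\sigma_{k}=0\}$, so $X_{k}$ is precisely the saturation of $\{\sigma_{k}=0\}$. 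The standard fact that the ideal of $C_{r}^{*}(G)$ generated by $C_{0}$ of an open set equals $C_{r}^{*}(G)$ restricted to its saturation, pushed through the surjection $\pi$, then gives $J_{k}=\langle P^{k}\rangle=\pi(C_{r}^{*}(G|_{X_{k}}))$.

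Finally, for (v) the first coordinate $\sigma$ is already continuous on $X$ by (i). I would define the concatenation $\psi(x,y)$ by prescribing its initial segments: for $m\in\mathbb{N}^{r}$ set $m'=m\wedge\sigma(x)$ and put $\psi(x,y)(0,m)=x(0,m')\,y(0,m-m')$, where $x(0,m')$ is the left factor of shape $m'$ furnished by (iii) and the two finite paths glue because $s(x)=t(y)$ by (ii). Continuity of $\psi$ reduces to showing that $(x,y)\mapsto\psi(x,y)(0,m)$ is locally constant for each $m$, which follows from a coordinatewise case analysis using that $\{\sigma_{j}=c\}$ is clopen for finite $c$, that $\{\sigma_{j}\geq m_{j}\}$ is clopen, and that $x\mapsto x(0,m')$ is locally constant on $\{\sigma\geq m'\}$ by the decomposition in (iii). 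Surjectivity comes cheaply: the image already contains $\mathbb{N}^{r}\times\Lambda^{\infty}$ (take $x$ a finite path), which is dense in $\overline{\mathbb{N}}^{r}\times\Lambda^{\infty}$, while $Z$ is compact as a closed subset of the compact space $X\times\Lambda^{\infty}$ (compactness of $\Lambda^{\infty}$ coming from (F)), so the image is closed and hence everything. The main obstacle is exactly the continuity of $\psi$ at points $x$ whose shape $\sigma(x)$ mixes finite and infinite coordinates, where the cut $m'=m\wedge\sigma(x)$ passes between the $x$- and $y$-regimes; it is the clopenness of the level sets $\{\sigma_{j}=c\}$ that keeps $m'$ locally constant and rescues the argument.
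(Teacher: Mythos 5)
Parts (i)--(iv) of your argument are correct and follow the same route as the paper: Gelfand duality for the inclusions of diagonal algebras in (i), the clopen partitions by $\mathrm{dom}(l_{a})$, $\mathrm{dom}(r_{a})$ in (ii), the disjoint decomposition $\{\sigma\geq k\}=\bigsqcup_{\sigma(\lambda)=k}\mathrm{ran}(l_{\lambda})$ iterated on both sides in (iii), and the saturation of $\{\sigma_{k}=0\}$ in (iv) (where the paper is terser; your added detail about the saturation is welcome, and your justification that $l_{\lambda}$ shifts $\sigma$ by $\sigma(\lambda)$ should, as in the paper, be checked on the dense set $\{\lambda\mu:\mu\in\overline{\Lambda}\}$ rather than merely asserted).

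Part (v) contains a genuine error. The formula $\psi(x,y)(0,m)=x(0,m')\,y(0,m-m')$ with $m'=m\wedge\sigma(x)$ fails whenever $m$ and $\sigma(x)$ are incomparable. Take $r=2$, $x=\lambda$ a finite path of shape $(2,0)$, $m=(1,1)$, so $m'=(1,0)$: then $s(x(0,m'))$ is an interior vertex of $\lambda$, not $s(\lambda)=t(y)$, so $x(0,m')$ and $y(0,(0,1))$ need not even be composable --- your justification ``the two paths glue because $s(x)=t(y)$'' only applies when $m'=\sigma(x)$. Moreover, even after fixing composability, the correct second factor is not $y(0,m-m')$: writing $x=\nu x''$ with $\sigma(\nu)=m'$, the segment $(xy)(m',m)$ is the shape-$(m-m')$ left factor of $x''\cdot y(0,m-m')$ supplied by the $k$-graph factorization property (a commuting square), and this differs from $y(0,m-m')$ in general. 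Since the entire difficulty of (v) is that there is no explicit model of $X$ in which to write $xy$ down, this is not a cosmetic slip: your continuity argument is built on a map that is not defined. The paper avoids the issue by never writing a formula for $xy$ off the dense set: it takes a convergent sequence $(\lambda_{n},y_{n})$ with $\lambda_{n}\in\overline{\Lambda}$, fixes $\mu=y_{n}(0,m)$ for large $n$, and uses the continuity of the right translation $r_{\mu}$ on $X$ together with the surjection $X\to X_{l}$ to conclude that $(\lambda_{n}\mu)(0,m)=(\lambda_{n}y_{n})(0,m)$ is eventually constant; the extension then exists by density. Your surjectivity argument (density of $\mathbb{N}^{r}\times\Lambda^{\infty}$ plus compactness of $Z$) coincides with the paper's and is fine.
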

\begin{proof}
 \begin{enumerate}
 \item 
We have the inclusion of algebras $\mathcal{E}_{l}\cap l^{\infty}(\overline{\Lambda})\subset\mathcal{E}\cap l^{\infty}(\overline{\Lambda})$, that is $C(X_{l})\subset C(X)$,  with identity on $C_{0}(\Lambda)$ hence there is a continuous surjection $X\to X_{l}$ which is the identity on the set $\overline{\Lambda}$. Similarly, there is a continuous surjection $X\to X_{r}$ which is the identity on the set $\overline{\Lambda}$. The diagram is commutative because $\sigma p_{l}=\sigma p_{r}$ on $\overline{\Lambda}$ which is a dense set. We denote by $\sigma$ the shape extended to $X_{l}$, $X_{r}$ or $X$.
\item
$X$ is a disjoint union of $dom(l_{a})$ so we can define $s(x)=a$ if $x\in dom(l_{a})$ and $t(x)=a$ if $x\in dom(r_{a})$. $s$ and $t$ are continuous since $s^{-1}(\{a\})=dom(l_{a})$ and $t^{-1}(\{a\})=dom(r_{a})$.
\item
We have a disjoint union $$X=\bigcup_{\sigma(\lambda)=k}dom(l_{\lambda})=\bigcup_{\sigma(\lambda)=k}dom(r_{\lambda})$$
Therefore there is one and only one $\lambda$ with $\sigma(\lambda)=k$ such that $x\in dom(l_{\lambda})$. We can take $x'=l_{\lambda}^{-1}(x)$ and then $x=\lambda x'$. Moreover $\sigma(x')=\sigma(x)-k$. This can be checked on the set $\{\lambda\mu:\mu\in\overline{\Lambda}\}$ which is a dense set in $dom(l_{\lambda}^{-1})=ran(l_{\lambda})$. We repeat now this reason for right factorization of $x'$.
\item
The projection $P_{k}$ corresponds to the open set $\{x\in X:\sigma(x)_{k}=0\}$ which generates the open $G$-invariant set $\{x\in X:\sigma(x)_{k}<\infty\}$.
\item
We have to show that the maps $(\Lambda\times\Lambda^{\infty})\cap Z\ni (x,y)\mapsto \sigma(x)\in \overline{\mathbb{N}}^{r}$ and $(\Lambda\times\Lambda^{\infty})\cap Z\ni (x,y)\mapsto xy\in \Lambda^{\infty}$ extend to continuous maps. The first claim is proven in $(i)$ above. To prove the second let $(\lambda_{n},y_{n})$ be a convergent sequence in $Z$. We have to prove that $(\lambda_{n}y_{n})(0,m)$ is eventually constant. Since $y_{n}$ is convergent in $\Lambda^{\infty}$ we know that $y_{n}(0,m)$ is eventually constant so we can suppose it is $\mu$. We know that the sequence $(\lambda_{n}\mu)_{n}$ converges in $X$ since $R_{\mu}$ is continuous so it converges also in $X_{l}$. This implies that $(\lambda_{n}\mu)(0,m)=(\lambda_{n}y_{n})(0,m)$ is eventually constant.
For the surjectivity of $\phi$ we use the density of the set $\mathbb{N}^{r}\times \Lambda^{\infty}$ in  $\overline{\mathbb{N}}^{r}\times \Lambda^{\infty}$. This dense set is in the image of $\phi$ which is closed since Z is compact.
\end{enumerate}
\end{proof}

From the factorization in (iii) of the proposition above, we can define commuting left and right semigroups of shifts $(L,R)$ on  $X$ with $dom(L_{k})=dom(R_{k})=\{x\in X:\sigma(x)\geq k\}$
$$L_{k}x=x', \;\;R_{k}x=x'' \textrm{ where } x=\lambda x'=x''\mu \textrm{ with }\sigma(\lambda)=\sigma(\mu)=k$$
These shifts are continuous since their restrictions to $dom(l_{\lambda})$, respectively $dom(r_{\lambda})$, are $l_{\lambda}^{-1}$ and $r_{\lambda}^{-1}$. 

\begin{prop}
\begin{enumerate}
 \item 
The maps 
$$T_{m}:\{(x,y)\in Z:x=x'x'',\sigma(x'')= m\}\ni (x,y)\mapsto (x',x''y'')\in Z$$ 
$$V_{m}:Z\to Z, \;\;V_{m}(x,y)=(L_{m}(x(y(0,m))),L_{m}(y))$$
define a MGDS $(T,V)$ on $Z$ which satisfies the condition (DC).
\item
The \textit{exit time} map $\sigma$ defined in section \ref{cuntzlike} is the same with the map $\sigma$ in the previews Proposition.
\end{enumerate}
\end{prop}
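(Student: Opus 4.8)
The plan is to treat $(T,V)$ as a single rank-$2r$ system with generators $T_{e_1},\dots,T_{e_r},V_{e_1},\dots,V_{e_r}$, and to reduce every assertion to the unique factorization property of Proposition \ref{mainprop}(iii) together with the fact, recorded just before the statement, that the left and right shifts $L_m,R_m$ and the multiplication maps $l_\lambda,r_\lambda$ are partial homeomorphisms of $X$. First I would record the two domain descriptions on which everything rests: $\operatorname{dom}(T_m)=\{(x,y)\in Z:\sigma(x)\ge m\}$, since by (iii) one can factor $x=x'x''$ with $\sigma(x'')=m$ exactly when $\sigma(x)\ge m$, whereas $\operatorname{dom}(V_m)=Z$ because $y\in\Lambda^\infty$ always admits the initial segment $y(0,m)$. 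Using Proposition \ref{mainprop}(ii) one then checks that the images land in $Z$: for $T_m(x,y)=(x',x''y)$ one has $s(x')=t(x'')=t(x''y)$, and for $V_m$ one computes $s(L_m(x\,y(0,m)))=s(y(0,m))=t(L_my)$.

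Next, each $T_m$ and $V_m$ is a partial homeomorphism, that is, a local homeomorphism onto an open set. I would cover the domain by the cylinders on which the relevant piece is locally constant: $\operatorname{dom}(T_m)$ is covered by the open sets $\{x\in\operatorname{ran}(r_\mu)\}$ with $\sigma(\mu)=m$, on which $T_m(x,y)=(r_\mu^{-1}x,\ l_\mu y)$; and $Z$ is covered by the open sets $\{y\in\operatorname{ran}(l_\nu)\}$ with $\sigma(\nu)=m$, on which $V_m(x,y)=(L_m(r_\nu x),\ l_\nu^{-1}y)$. On each cylinder the map is a composition of the partial homeomorphisms $l$, $r$, $L$ of $X$, hence a homeomorphism onto an open set; since the cylinders form an open cover of the domain, $T_m$ and $V_m$ are local homeomorphisms onto open ranges. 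Note that $V_m$ need not be \emph{globally} injective — distinct cylinders may overlap in the image, exactly as for the one-sided shift on $\Lambda^\infty$ — which is why the definition only requires a local homeomorphism.

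Then I would establish commutativity. The relations $T_mT_{m'}=T_{m+m'}$ and $V_mV_{m'}=V_{m+m'}$ are proved directly: the first follows from the uniqueness in (iii) applied to the composite right factorization $x=x_1'x_2'x''$, and the second from $L_mL_{m'}=L_{m+m'}$ together with the identity $L_{m'}(u\nu)=L_{m'}(u)\,\nu$, valid when $\sigma(u)\ge m'$. The only genuinely delicate point is $T_mV_{m'}=V_{m'}T_m$; here I would evaluate both composites on a point $(x,y)$ with $\sigma(x)\ge m$, compute the two coordinates separately, and reconcile them by uniqueness of factorization. The clean way to organize this computation is to observe that $T_m$ preserves the total word $\psi(x,y)=xy$ while lowering the $\overline{\mathbb N}^r$-component of $\phi$ by $m$, whereas $V_m$ shifts $\psi$ by $m$ while preserving $\sigma(x)$; this is exactly what makes the two operations independent, and it realizes the map $\phi$ of Proposition \ref{mainprop}(v) as the equivariant map onto the product-of-shifts system on $\overline{\mathbb N}^r\times\Lambda^\infty$.

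Finally, condition (DC) and assertion (ii) are formal consequences of the domain description. Writing a general index as $(k,l)\in\mathbb N^{2r}$ with $W^{(k,l)}=T^kV^l$, one has $\operatorname{dom}(W^{(k,l)})=\{(x,y):\sigma(x)\ge k\}$, depending only on $k$; hence $\operatorname{dom}(W^{(k,l)})\cap\operatorname{dom}(W^{(k',l')})=\{\sigma(x)\ge k\vee k'\}=\operatorname{dom}(W^{(k,l)\vee(k',l')})$, so (DC) holds, in fact with equality. For (ii), the exit time of the combined system is $\sup\{(k,l):\sigma(x)\ge k\}=(\sigma(x),(\infty,\dots,\infty))$: the $V$-directions never contribute a finite value since $V$ is globally defined, while the $T$-directions give $\sup\{k:\sigma(x)\ge k\}=\sigma(x)$, which is precisely the map $\sigma$ of Proposition \ref{mainprop}, namely $(x,y)\mapsto\sigma(x)$. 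The main obstacle throughout is the single verification $T_mV_{m'}=V_{m'}T_m$, where the multi-index maxima and the factorizations must be matched by hand; every other step is bookkeeping on the partial homeomorphisms of $X$ already in place.
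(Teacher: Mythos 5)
Your proposal is correct and follows essentially the same route as the paper: the same domain descriptions $\operatorname{dom}(T_m)=\{\sigma(x)\ge m\}$, $\operatorname{dom}(V_m)=Z$, local homeomorphy via the cylinder decompositions into $l_\lambda,r_\lambda,L_m$, the hand computation of $T_mV_{m'}=V_{m'}T_m$ through the factorization $x''y'=\alpha\beta$, and (DC) plus the exit-time identification as formal consequences of the fact that all domains depend only on $\sigma(x)$. Your remark that the $V$-directions contribute only $\infty$ to the rank-$2r$ exit time is a slightly more careful reading of (ii) than the paper's, but the argument is the same.
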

$$
\xy
{\ar@/^/@{-} (0,-20)*{}; (0,20)*{}};
{\ar@/_/@{-} (55,-20)*{}; (55,20)*{}};
{(0,0)*{}; (30,0)*{} **\dir{-}};
{(30,0)*{}; (30,-20)*{} **\dir{-}};
{(30,-20)*{}; (0,-20)*{} **\dir{-}};
{(0,-20)*{}; (0,0)*{} **\dir{-}};
{(0,-10)*{}; (30,-10)*{} **\dir{.}};
{(15,0)*{}; (15,-20)*{} **\dir{.}};
{\ar (55,0)*{}; (35,0)*{}};
{\ar (35,20)*{}; (35,0)*{}};
(32,0)*{,};
(15,-25)*{x};
(7,-15)*{x'};
(22,-5)*{x''};
(50,10)*{y};
(68,0)*{\longrightarrow};
(-7,0)*{T_{m}:};
{\ar@/^/@{-} (80,-20)*{}; (80,20)*{}};
{(80,0)*{}; (95,0)*{} **\dir{-}};
{(95,0)*{}; (95,-10)*{} **\dir{-}};
{(95,-10)*{}; (80,-10)*{} **\dir{-}};
{(80,0)*{}; (80,-10)*{} **\dir{-}};
(87,-5)*{x'};
(97,0)*{,};
{\ar@/_/@{-} (130,-20)*{}; (130,20)*{}};
{\ar (100,20)*{}; (100,0)*{}};
{\ar (130,0)*{}; (100,0)*{}};
{\ar@{.>} (115,10)*{}; (115,0)*{}};
{\ar@{.>} (115,10)*{}; (100,10)*{}};
{\ar@{.>} (130,10)*{}; (115,10)*{}};
{\ar@{.>} (115,20)*{}; (115,10)*{}};
(125,15)*{y};
(87,-5)*{x'};
(107,5)*{x''};
\endxy
$$

$$
\xy
(-7,0)*{V_{m}:};
{\ar@/^/@{-} (0,-15)*{}; (0,15)*{}};
{(0,0)*{}; (10,0)*{} **\dir{-}};
{(10,0)*{}; (10,-15)*{} **\dir{-}};
{(10,-15)*{}; (0,-15)*{} **\dir{-}};
{(0,0)*{}; (0,-15)*{} **\dir{-}};
(5,-7)*{x};
(22,2)*{y(0,m)};
(13,0)*{,};
{\ar@/_/@{-} (40,-15)*{}; (40,15)*{}};
{\ar (40,0)*{}; (15,0)*{}};
{\ar (15,15)*{}; (15,0)*{}};
{\ar@{.>} (30,5)*{}; (30,0)*{}};
{\ar@{.>} (30,5)*{}; (15,5)*{}};
{\ar@{.>} (30,15)*{}; (30,5)*{}};
{\ar@{.>} (40,5)*{}; (30,5)*{}};
(35,10)*{L_{m}(y)};
(20,-4)*{y};
(45,0)*{\to};
{\ar@/^/@{-} (50,-20)*{}; (50,20)*{}};
{(50,0)*{}; (75,0)*{} **\dir{-}};
{(75,0)*{}; (75,-20)*{} **\dir{-}};
{(75,-20)*{}; (50,-20)*{} **\dir{-}};
{(50,-20)*{}; (50,0)*{} **\dir{-}};
{(60,0)*{}; (60,-20)*{} **\dir{.}};
{(50,-5)*{}; (75,-5)*{} **\dir{.}};
(55,-10)*{x};
(67,-3)*{y(0,m)};
(77,0)*{,};
{\ar@/_/@{-} (90,-20)*{}; (90,20)*{}};
{(65,0)*{}; (65,-20)*{} **\dir{--}};
{(50,-15)*{}; (75,-15)*{} **\dir{--}};
{\ar (90,0)*{}; (80,0)*{}};
{\ar (80,10)*{}; (80,0)*{}};
(85,5)*{L_{m}(y)};
(95,0)*{\to};
{\ar@/^/@{-} (100,-15)*{}; (100,15)*{}};
{(100,0)*{}; (110,0)*{} **\dir{-}};
{(110,0)*{}; (110,-15)*{} **\dir{-}};
{(110,-15)*{}; (100,-15)*{} **\dir{--}};
{(100,-15)*{}; (100,0)*{} **\dir{--}};
(110,-18)*{L_{m}(x(y(0,m)))};
{\ar@/_/@{-} (125,-15)*{}; (125,15)*{}};
{\ar (125,0)*{}; (115,0)*{}};
{\ar (115,10)*{}; (115,0)*{}};
(120,5)*{L_{m}(y)};
(112,0)*{,};
\endxy
$$
\begin{proof}
\begin{enumerate}
\item
We note that $dom(T_{m})=\{(x,y)\in Z:\sigma(x)\geq m\}$ and $dom(V_{m})=Z$ so $dom(T_{m}V_{k})=dom(V_{k}T_{m})=dom(T_{m})$. $T_{m}$ is a local homeomorphism from $Z\cap (R_{\lambda}R_{\lambda}^{*}\times\Lambda^{\infty})$ to $Z\cap(X\times s_{\lambda}s_{\lambda}^{*})$  and $V_{m}$ is a homeomorphism from $\{(x,y)\in Z:xy(0,m)\in L_{\lambda}L_{\lambda}^{*}\}$ to $\{(x,y)\in Z:t(x)=s(\lambda)\}$ with $\sigma(\lambda)=m$ . Using the map $\phi$ from Proposition \ref{mainprop}, we can see the map $(T_{m},V_{k})$ as a homeomorphism from the set $\phi^{-1}(\{(p,x):p\geq m, x\in dom(l_{\lambda})\})$ to $\phi^{-1}(\{(p,x):p\geq 0,x\in dom(l_{s(\lambda)})\})$. The semigroup condition of $T$ and $V$ is easily verified. We prove now that $T_{m}V_{k}=V_{k}T_{m}$. Let $(x,y)\in Z$ with $\sigma(x)\geq m$. Write $x=x'x''$, $y=y'y''$, $x''y'=\alpha\beta$ with $\sigma(x'')=\sigma(\beta)=m$, $\sigma(y')=\sigma(\alpha)=k$.
$$
\xy
{(0,0)*{}; (30,0)*{} **\dir{-}};
{(30,0)*{}; (30,20)*{} **\dir{-}};
{(30,20)*{}; (0,20)*{} **\dir{-}};
{(0,20)*{}; (0,0)*{} **\dir{-}};
{(0,10)*{}; (30,10)*{} **\dir{.}};
{(15,0)*{}; (15,20)*{} **\dir{.}};
(15,-4)*{\sigma(x'')=m};
(7,5)*{x'};
(22,15)*{x''};
(-4,10)*{x:};
{\ar (60,0)*{}; (40,0)*{}};
{\ar (40,20)*{}; (40,0)*{}};
{\ar@{.>} (50,10)*{}; (50,0)*{}};
{\ar@{.>} (50,10)*{}; (40,10)*{}};
{\ar@{.>} (50,20)*{}; (50,10)*{}};
{\ar@{.>} (60,10)*{}; (50,10)*{}};
(36,10)*{y:};
(45,5)*{y'};
(55,15)*{y''};
(50,-4)*{\sigma(y')=k};
{(65,0)*{}; (90,0)*{} **\dir{-}};
{(90,0)*{}; (90,20)*{} **\dir{-}};
{(90,20)*{}; (65,20)*{} **\dir{-}};
{(65,20)*{}; (65,0)*{} **\dir{-}};
{(80,0)*{}; (80,20)*{} **\dir{.}};
{(65,10)*{}; (90,10)*{} **\dir{.}};
(72,5)*{x''};
(85,15)*{y'};
(92,10)*{=};
{(95,0)*{}; (120,0)*{} **\dir{-}};
{(120,0)*{}; (120,20)*{} **\dir{-}};
{(120,20)*{}; (95,20)*{} **\dir{-}};
{(95,20)*{}; (95,0)*{} **\dir{-}};
{(105,0)*{}; (105,20)*{} **\dir{.}};
{(95,10)*{}; (120,10)*{} **\dir{.}};
(100,5)*{\alpha};
(112,15)*{\beta};
(107,-4)*{\sigma(\alpha)=k};
(107,-8)*{\sigma(\beta)=m};
\endxy
$$
 One has 
\begin{eqnarray*}
T_{m}V_{k}(x,y)
&=&
T_{m}(L_{k}(x'x''y'),y'')=T_{m}(L_{k}(x'\alpha\beta),y'')\\
&=&
T_{m}(L_{k}(x'\alpha)\beta,y''))=(L_{k}(x'\alpha),\beta y'')\\
V_{k}T_{m}(x,y)
&=&
V_{k}T_{m}(x'x'',y'y'')=V_{k}(x',x''y'y'')\\
&=&
V_{k}(x',\alpha\beta y'')=(L_{k}(x'\alpha),\beta y'')
\end{eqnarray*}
The (DC) condition is satisfied since $dom(T_{m}V_{k})\cap dom(T_{m'}V_{k'})=dom(T_{m})\cap dom(T_{m'})=\{(x,y)\in Z: \sigma(x)\geq m\vee m'\}=dom(T_{m\vee m'})$.
\item
Recall that the map $\sigma$ in section \ref{cuntzlike} was $\sigma(x)=sup\{m:x\in dom(T_{m})\}$. In our case this supremum is $sup\{m:m\leq \sigma(x)\}=\sigma(x)$.
\end{enumerate}
\end{proof}

There is a natural equivariant map between MGDS $(Z, (T,V))$ and the MGDS $(\overline{\mathbb{N}}^{n}\times \Lambda^{\infty},S\times W)$ which gives the algebra $\mathcal{T}^{\otimes r}\otimes C^{*}(\Lambda)$. Here, an equivariant map between two MGDS $(X,T)$ and $(Y,S)$ means a map $\phi:X\to Y$ such that $\phi\circ T_{m}=S_{m}\circ \phi$.

\begin{prop}
Let $(\overline{\mathbb{N}}^{r},S),(\Lambda^{\infty},W)$ be the MGDS of Example \ref{example} (i), (ii). The map $\phi$ of Proposition \ref{mainprop}(v) is an equivariant map between $(Z,(T,V))$ and $(\overline{\mathbb{N}}^{n}\times \Lambda^{\infty},S\times W)$
\end{prop}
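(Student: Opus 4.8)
The plan is to establish the two intertwining identities
$$\phi\circ T_{m}=(S_{m}\times\id)\circ\phi\qquad\text{and}\qquad \phi\circ V_{m}=(\id\times W_{m})\circ\phi$$
for every $m\in\mathbb{N}^{r}$. Since $(T,V)$ and $(S,W)$ are actions of $\mathbb{N}^{r}\times\mathbb{N}^{r}$ generated by the $T_{m},V_{m}$ respectively by the $S_{m}\times\id,\ \id\times W_{m}$, and since $\phi\circ(T_{m}V_{k})=(S_{m}\times W_{k})\circ\phi$ follows by composing the two, these identities give equivariance for the whole action. Each side of each identity is a composition of continuous maps: $\phi$ is continuous by Proposition \ref{mainprop}(v), while $T_{m}$, $V_{m}$ and the shifts $S_{m}$, $W_{m}$ are continuous (partial) homeomorphisms. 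As the target $\overline{\mathbb{N}}^{r}\times\Lambda^{\infty}$ is Hausdorff, it therefore suffices to check the identities on the dense set $(\Lambda\times\Lambda^{\infty})\cap Z$ on which $\phi(x,y)=(\sigma(x),xy)$ is given explicitly, and then to invoke continuity. One also checks that $\phi$ carries $\textrm{dom}(T_{m})$ into $\textrm{dom}(S_{m}\times\id)$ and $Z=\textrm{dom}(V_{m})$ into $\textrm{dom}(\id\times W_{m})$: the first holds because $(x,y)\in\textrm{dom}(T_{m})$ forces $\sigma(x)\geq m$, the second because $\Lambda$ has no sources, so $W_{m}=L_{m}$ is everywhere defined on $\Lambda^{\infty}$.

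For $T_{m}$, take $(x,y)$ in the dense set with $\sigma(x)\geq m$ and write $x=x'x''$ with $\sigma(x'')=m$, so $T_{m}(x,y)=(x',x''y)$ with $x'$ a finite path and $x''y\in\Lambda^{\infty}$; hence the image again lies in the domain where $\phi$ is explicit. Using additivity of the shape under concatenation ($\sigma(x')=\sigma(x)-m$) and associativity $x'(x''y)=(x'x'')y=xy$ one computes
$$\phi(T_{m}(x,y))=(\sigma(x')\,,\,x'(x''y))=(\sigma(x)-m\,,\,xy)=(S_{m}\times\id)\,\phi(x,y).$$
Because $(\Lambda\times\Lambda^{\infty})\cap Z$ meets the open set $\textrm{dom}(T_{m})$ in a dense subset, continuity extends this to all of $\textrm{dom}(T_{m})$.

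The substantive step is $V_{m}$, and the ``interface transfer'' identity it requires is the real obstacle; everything else is formal. For $(x,y)$ in the dense set put $\mu=y(0,m)$ and $y=\mu y'$, so $V_{m}(x,y)=(L_{m}(x\mu),y')$, again in the explicit domain. The first coordinate is immediate: $\sigma(L_{m}(x\mu))=\sigma(x\mu)-m=\sigma(x)$. For the second coordinate one needs $L_{m}(x\mu)\cdot y'=L_{m}(xy)$, which says exactly that $V_{m}$ maps to $W_{m}$. To see it, factor $x\mu=\nu w$ with $\sigma(\nu)=m$ and $w=L_{m}(x\mu)$; since $m\leq\sigma(x\mu)=\sigma(x)+m$, the unique factorization of Proposition \ref{mainprop}(iii) shows that $\nu$ is the initial $m$-segment of the infinite path $xy=x\mu y'=\nu w y'$, whence $L_{m}(xy)=wy'=L_{m}(x\mu)\cdot y'$. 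Therefore
$$\phi(V_{m}(x,y))=(\sigma(x)\,,\,L_{m}(xy))=(\id\times W_{m})\,\phi(x,y),$$
and since $\textrm{dom}(V_{m})=Z$ this extends by continuity to all of $Z$. Thus the only genuine content is this transfer identity together with the reduction to the explicit dense set and the verification that $T_{m}$, $V_{m}$ keep the chosen points inside that set; the remaining manipulations are routine bookkeeping with concatenation and the shape map.
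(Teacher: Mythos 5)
Your proposal is correct and follows essentially the same route as the paper: verify the two intertwining identities for the generators $T_m$ and $V_m$ separately, check the domain compatibility, compute explicitly on the dense subset $(\overline{\Lambda}\times\Lambda^{\infty})\cap Z$ using the factorizations $x=x'x''$ and $x\mu=\nu w$ (the paper's $xy'=\alpha\beta$), and extend by continuity. Your ``interface transfer'' identity $L_{m}(x\mu)\cdot y'=L_{m}(xy)$ is exactly the step the paper carries out via $xy'y''=\alpha\beta y''\mapsto\beta y''$, so the two arguments coincide.
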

\begin{proof}
Regarding domains, it is enough to check that $dom(S_{k}\phi)=dom(\phi T_{k})$ since $W_{k}$ and $V_{k}$ are  defined everywhere.
$$\phi(dom(T_{k})=\phi(\{(x,y)\in Z; \sigma(x)\geq k\})=\{(n,x);n\geq k\}=dom(S_{k})\phi$$

Now it remains to to check that $S_{k}\phi(x,y)=\phi T_{k}(x,y)$ and $W_{k}\phi(x,y)=\phi V_{k}(x,y)$ for $(x,y)\in Z$ with $\sigma(x)\in \mathbb{N}^{n}$ since $\overline{\Lambda}$ is a dense set in $X$ . If $x=x'x''$ with $\sigma(x'')=k$ one has:
\begin{eqnarray*}
S_{k}(\phi(x,y))
&=&S_{k}(\sigma(x),xy)=(\sigma(x)-k,xy)\\
&=&
(\sigma(x'),x'x''y)=\phi(x',x''y)=\phi(T^{k}(x,y))
\end{eqnarray*}
If $y=y'y''$  and $xy'=\alpha\beta$ with $\sigma(y')=\sigma(\alpha)=k$ (so $\sigma(x)=\sigma(\beta)=k$ ) one has
\begin{eqnarray*}
W_{k}(\phi(x,y))&=&
W_{k}(\sigma(x),xy'y'')=W_{k}(\sigma(x),\alpha\beta y'')\\
&=&
(\sigma(x),\beta y'')=(\sigma(\beta),\beta y'')=\phi(\beta,y'')=\phi(V_{k}(x,y))
\end{eqnarray*}
\end{proof}
An equivariant map $\phi:(X,T)\to (Y,S)$ induces a natural map of the associated groupoids: $\hat{\phi}:G(X,T)\to G(Y,S)$, $\hat{\phi}(x,z,y)=(\phi(x),z,\phi(y))$. The next lemma gives a condition for a morphism of r-discrete groupoids to induce a morphism of the corresponding reduced algebras.
\begin{lemma} \label{tomor}
Let $\phi:G_{1}\to G_{2}$ be a morphism of two r-discrete groupoids such that $\phi:G_{1}^{x}\to G_{2}^{\phi(x)}$ is a bijection for any $x\in G_{1}^{0}$ and $\phi_{0}:G_{1}^{0}\to G_{2}^{0}$ is proper. Then $\phi$ is proper and the map  $C_{c}(G_{1})\ni f \mapsto \title{f}=f\circ\phi\in C_{c}(G_{2})$ extends to a morphism $\tilde{\phi}:C^{*}(G_{2})\to C^{*}_{r}(G_{1})$ 
\end{lemma}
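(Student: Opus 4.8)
The plan is to realize $\tilde\phi$ as pullback of functions: for $f\in C_c(G_2)$ put $\tilde f=f\circ\phi$, which is the only reading consistent with the stated direction $C^{*}(G_{2})\to C^{*}_{r}(G_{1})$. The first thing to establish is that $\phi$ is proper, so that $\tilde f$, supported in $\phi^{-1}(\textrm{supp}\,f)$, lies in $C_c(G_1)$; I expect this to be the only genuinely technical point. It suffices to show that any net $(\gamma_\alpha)$ in $G_1$ with $\phi(\gamma_\alpha)\to\eta$ has a convergent subnet. Since $\phi$ is a groupoid morphism, $\phi_0(r(\gamma_\alpha))=r(\phi(\gamma_\alpha))\to r(\eta)$, and properness of $\phi_0$ confines $(r(\gamma_\alpha))$ eventually to a compact set; after passing to a subnet, $r(\gamma_\alpha)\to x$ with $\phi_0(x)=r(\eta)$. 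Let $g\in G_1^x$ be the unique preimage of $\eta$ under the bijection $\phi:G_1^x\to G_2^{\phi_0(x)}$, and choose an open bisection $V\ni g$ of $G_1$ with continuous inverse section $\beta=(r|_V)^{-1}$; then $h_\alpha:=\beta(r(\gamma_\alpha))\to g$ and $r(h_\alpha)=r(\gamma_\alpha)$. Finally, taking an open bisection $U\ni\eta$ of $G_2$, both $\phi(h_\alpha)$ and $\phi(\gamma_\alpha)$ eventually lie in $U$ and in the common range fibre over $\phi_0(r(\gamma_\alpha))$; as a bisection meets each fibre at most once they agree, and injectivity of $\phi$ on that fibre gives $h_\alpha=\gamma_\alpha$ eventually, so $\gamma_\alpha\to g$. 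Hence $\phi$ is proper.

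Next I would verify that $\tilde\phi$ is a $*$-homomorphism, with the fibrewise bijection playing the role of the composability hypothesis in Lemma \ref{togerms}. The involution is immediate from $\phi(\gamma^{-1})=\phi(\gamma)^{-1}$. For the product, the convolution on the \'etale groupoids (with counting Haar system) gives
$$(\tilde f\star\tilde g)(\gamma)=\sum_{\delta\in G_1^{r(\gamma)}}f(\phi(\delta))\,g(\phi(\delta)^{-1}\phi(\gamma)),$$
and reindexing by $\alpha=\phi(\delta)$ through the bijection $\phi:G_1^{r(\gamma)}\to G_2^{r(\phi(\gamma))}$ turns the right-hand side into $(f\star g)(\phi(\gamma))=\widetilde{f\star g}(\gamma)$.

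The passage to completions follows the template of Lemma \ref{togerms}. Using the range-fibre bijection,
$$\|\tilde f\|_{I,r}=\sup_{x\in G_1^0}\sum_{\delta\in G_1^x}|f(\phi(\delta))|=\sup_{x\in G_1^0}\sum_{\alpha\in G_2^{\phi_0(x)}}|f(\alpha)|\le\|f\|_{I,r},$$
and, since inversion carries the range-fibre bijections to source-fibre bijections, the same bound holds for the source $I$-norm; hence $\|\tilde f\|_I\le\|f\|_I$. Composing $\tilde\phi$ with the regular representation of $C_c(G_1)$, which is bounded by the $I$-norm, produces a $*$-representation $\rho$ of $C_c(G_2)$ with $\|\rho(f)\|\le\|\tilde f\|_I\le\|f\|_I$. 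Being bounded by the $I$-norm, $\rho$ is dominated by the universal norm, $\|\rho(f)\|\le\|f\|_{C^{*}(G_2)}$, so it extends to the desired morphism $\tilde\phi:C^{*}(G_2)\to C^{*}_r(G_1)$.

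The algebraic and $I$-norm computations are formal once the fibre bijection is available and parallel Lemma \ref{togerms}; the main obstacle is the properness of $\phi$ in the first step, where the difficulty is to promote the set-theoretic fibre bijection to genuine convergence of the net. This is exactly what the bisection-plus-section argument accomplishes, and it is the reason the hypothesis bundles a fibrewise bijection together with properness of $\phi_0$ rather than assuming properness of $\phi$ outright.
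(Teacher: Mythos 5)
Your proposal is correct and follows essentially the same route as the paper: properness of $\phi$ is deduced from properness of $\phi_{0}$ together with the fibrewise bijection and open bisections (the range map being a local homeomorphism), the $*$-homomorphism property comes from reindexing the convolution sum through the bijection $\phi:G_{1}^{x}\to G_{2}^{\phi(x)}$, and the extension to the completions goes through the $I$-norm bound and the regular representation exactly as in Lemma \ref{togerms}. Your net-based properness argument and the explicit $I$-norm estimate supply details that the paper's proof only sketches, but the underlying ideas coincide.
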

\begin{proof}
It is known \cite{re} that in an r-discrete groupoid the range map is a local homeomorphism. Let $r_{1}, r_{2}$ be the range map in $G_{1},G_{2}$, $K\subset G_{2}$ be a compact set such that $r_{2}:K\to r_{2}(K)$ is a homeomorphism. Since $\phi_{0}$ is proper we know that $(\phi_{0})^{-1}(r_{2}(K))$ is compact. Our hypothesis gives that $r_{1}$ is a homeomorphism from $K$ to $r_{1}(K)$.
It is enough to show that  $\tilde{\phi}$ is a morphism between the topological algebras $C_{c}(G_{2})$ and $C_{c}(G_{1})$.
Indeed $\tilde{\phi}$ is continuous since $\phi$ is proper. 
$$\widetilde{f\star g}(t)=(f\star g)(\phi(t))=\sum_{s\in G_{2}^{\phi(s(t))}}f(\phi(t)s)g(s^{-1})$$
We use now the assumption that $\phi:G_{1}^{x}\to G_{1}^{\phi(x)}$ is a bijection and we get
$$\widetilde{f\star g}(t)=\sum_{v\in G_{1}^{s(t)}}f(\phi(t)\phi(v))g(\phi(v)^{-1})=\tilde{f}\star\tilde{g}(t)$$
hence $\widetilde{f\star g}=\tilde{f}\star\tilde{g}$
\end{proof}

A simple examples of such maps is given by transformation groups $\phi:X\rtimes G\to G$, $\phi(x,g)=g$ where $X$ is compact.
\begin{prop}
The previews lemma holds if $G_{1}=G(Z,(T,V))$ and $G_{2}=G(\overline{\mathbb{N}}^{n}\times \Lambda^{\infty},S\times W)$
\end{prop}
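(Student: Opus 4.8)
The plan is to apply Lemma~\ref{tomor} to the groupoid map $\hat\phi\colon G(Z,(T,V))\to G(\overline{\mathbb{N}}^{r}\times\Lambda^{\infty},S\times W)$, $\hat\phi(w,z,w')=(\phi(w),z,\phi(w'))$, induced by the equivariant surjection $\phi\colon Z\to\overline{\mathbb{N}}^{r}\times\Lambda^{\infty}$ of Proposition~\ref{mainprop}(v). Two of the three hypotheses are cheap. First, $\hat\phi$ is a morphism of étale (r-discrete) groupoids: it preserves the $\mathbb{Z}^{r}\times\mathbb{Z}^{r}$-grading, hence composition and inversion; it lands in the target groupoid because $\phi$ intertwines $T_{m}$ with $S_{m}$ and $V_{k}$ with $W_{k}$ (the equivariance of the preceding proposition); and it is continuous since $\phi$ is continuous and both topologies are generated by the basic sets of Definition~\ref{semidirect}. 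Second, the properness of $\phi_{0}=\phi$ is automatic: $Z$ is compact by Proposition~\ref{mainprop}(v) and the target is Hausdorff, so preimages of compact sets are compact.

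The real work is the fibrewise bijectivity $\hat\phi\colon G(Z,(T,V))^{w}\to G(\overline{\mathbb{N}}^{r}\times\Lambda^{\infty},S\times W)^{\phi(w)}$ for each unit $w=(a,b)\in Z$. Since the target comes from the product of the systems $(\overline{\mathbb{N}}^{r},S)$ and $(\Lambda^{\infty},W)$, its range fibre over $\phi(w)=(\sigma(a),ab)$ splits as $G(\overline{\mathbb{N}}^{r},S)^{\sigma(a)}\times G(\Lambda^{\infty},W)^{ab}$, so for each fixed grading $(s,d)\in\mathbb{Z}^{r}\times\mathbb{Z}^{r}$ it suffices to show $w'\mapsto\phi(w')$ is a bijection from $\{w':(w,(s,d),w')\in G(Z,(T,V))\}$ onto the corresponding slice of the target. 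I would first check this on the dense set $D=(\Lambda\times\Lambda^{\infty})\cap Z$, where $a$ is a finite path: there $\phi$ is even injective, since $\sigma(a_{1})=\sigma(a_{2})=:p$ and $a_{1}b_{1}=a_{2}b_{2}=:c$ force $a_{i}=c(0,p)$ and $b_{i}=L_{p}(c)$ by unique factorization of finite paths. Injectivity for fixed grading is then the statement that the $s$-component is recovered from $\sigma(a')=\sigma(a)-s$ (using that the $T_{m}$ are injective, the removed block being the shape-$m$ prefix of the second coordinate), while the $W$-component is recovered from a prefix of $ab$; surjectivity is obtained by running this backwards, building the required $w'=(a',b')$ with prescribed $\sigma(a')=\sigma(a)-s$ and $a'b'=c'$ directly from the unique factorization of Proposition~\ref{mainprop}(iii).

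The hard part is extending this from $D$ to all of $Z$, that is to pairs $(a,b)$ whose first coordinate has infinite shape in some direction. For such points the concatenation $ab$ and the left and right factorizations exist only through the continuous extensions of Proposition~\ref{mainprop}, and $\phi$ itself need not be globally injective on $Z$---two points of $X$ with the same finite left factors but different limiting structure may have equal image---so the fibrewise injectivity must genuinely exploit the shift relation $(T,V)^{M}w=(T,V)^{N}w'$ rather than injectivity of $\phi$ alone. I would control this by combining the density of $D$ with properties (i)--(iii) of Proposition~\ref{mainprop}: (i) makes the grading $s$ and the shape $\sigma(a')=\sigma(a)-s$ continuous and so determined along the fibre, (iii) supplies the unique $a'$ of the prescribed shape with $a'b'=c'$, and (ii) preserves the matching condition $s(a')=t(b')$ in the limit. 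The delicate verification---that these extended factorizations remain compatible with the shift relation, equivalently with the domain condition (DC) of $(Z,(T,V))$, on the infinite-shape stratum---is where I expect the main difficulty to lie; the rest is the bookkeeping already transparent on $D$.
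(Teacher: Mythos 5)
Your setup is right --- the proposition is an application of Lemma \ref{tomor} to $\hat\phi$, properness of $\phi$ is immediate from compactness of $Z$, and the whole content is the fibrewise bijectivity of $\hat\phi$ --- but the proof is not actually carried out. You verify bijectivity only on the dense stratum $D=(\Lambda\times\Lambda^{\infty})\cap Z$, where the first coordinate is a finite path and everything reduces to unique factorization of finite paths, and then explicitly defer ``the hard part,'' the points whose first coordinate has infinite shape in some direction. That deferred part \emph{is} the proposition: fibrewise bijectivity is a discrete, pointwise algebraic condition on each fibre $G_{1}^{w}$, and there is no continuity or density principle that transports it from a dense set of units to all units (injectivity in particular can fail at a limit point even if it holds nearby, since two fibre elements over the limit need not be limits of distinct fibre elements over points of $D$). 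So the density route you sketch does not close the argument, and you say as much.

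The paper avoids this entirely: it never restricts to $D$, but proves surjectivity and injectivity of the fibre maps directly for arbitrary $(\lambda,x)\in Z$, using the factorization property of Proposition \ref{mainprop}(iii), which is stated for \emph{all} of $X$ with $\sigma(x)\ge k+p$ and produces finite left/right factors with a middle term again in $X$. Concretely, it first treats the two component maps separately --- $\sigma$ between $G(Z,T)$ and $G(\overline{\mathbb{N}}^{r},S)$, then $\psi$ between $G(Z,V)$ and $G(\Lambda^{\infty},W)$ --- in each case constructing the preimage explicitly from the factorization (surjectivity) and, for injectivity, using (DC) to bring the two defining relations $T^{n}w=T^{m}w'$ and $T^{p}w=T^{q}w''$ to a common exponent $k$ and then invoking uniqueness of factorization in $X$ and $\Lambda^{\infty}$. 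The combined statement for $(T,V)$ is then deduced by the same (DC) reduction followed by successive application of the two partial results. If you replace your density argument by this direct use of Proposition \ref{mainprop}(iii) on all of $Z$, your outline becomes the paper's proof; as written, the central verification is missing.
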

\begin{proof}
First let us prove that the lemma holds for the map $\sigma$ and the pairs of groupoids $G_{1}=G(Z,T)$ and $G_{2}=G(\overline{\mathbb{N}}^{n},S)$. To prove the surjectivity let $(\lambda, x)\in Z$, $(\sigma(\lambda),n-m,k)\in G(\overline{\mathbb{N}}^{r},S)$ such that $n\leq \sigma(\lambda)$, $m\leq k$ and $\sigma(\lambda)-n=k-m$. We construct $(\lambda',x')\in Z$ such that $\sigma(\lambda')=k$ and $T_{n}(\lambda,x)=T_{m}(\lambda',x')$. Since $n\leq \sigma(\lambda)$ one can decompose $\lambda=\alpha\beta$ with $\sigma(\beta)=n$. Then $\lambda'=\alpha((\beta x)(0,m)))$ and $x'=(\beta x)(m,\infty)$ have the properties required. To prove the injectivity let $(\lambda,x)\in Z$, $(\lambda',x')\in Z$, $(\lambda'',x'')\in Z$, $n,m,p,q\in \mathbb{N}^{r}$ such that $n-m=p-q$, $\sigma(\lambda')=\sigma(\lambda'')$ $T_{n}(\lambda,x)=T_{m}(\lambda',x')$ and $T_{p}(\lambda,x)=T_{q}(\lambda'',x'')$. Because of the condition (DC) of T we have $T_{n\vee p}(\lambda,x)=T_{m+n\vee p -n}(\lambda',x')$ and $T_{n\vee p}(\lambda,x)=T_{q+n\vee p -p}(\lambda'',x'')$ so $T_{k}(\lambda',x')=T_{k}(\lambda'',x'')$ where $k=m+n\vee p -n=q+n\vee p -p$, hence $\lambda'=\alpha\beta'$, $\lambda''=\alpha\beta''$ with $\sigma(\beta')=\sigma(\beta)=k$ and $\beta' x'=\beta''x''$. The uniqueness of the factorization of a word in $\Lambda^{\infty}$ implies that $\lambda'=\lambda''$ and $x'=x''$.

We prove now that the above lemma is true for $\psi$, $G_{1}=G(Z,V)$ and $G_{2}=G(\Lambda^{\infty},W)$. To prove the surjectivity, let $(\lambda x,n-m,y)\in G(\Lambda^{\infty},W)$ with $\sigma(\alpha)=n$, $\sigma(\beta)=m$, $\lambda x=\alpha z$, $y=\beta z$. One has $\lambda x(0,n)=\alpha \gamma$ so $T_{n}(\lambda,x)=(\gamma,x(n,\infty))$. Let $\lambda'$ be given by the factorization $\beta \gamma=\lambda'\beta'$ with $\sigma(\beta)'=m$ and $x'=\beta'x(n,\infty)$. Then $T_{m}(\lambda',x')=(\gamma,x(n,\infty))=T_{n}(\lambda,x)$ so $((\lambda,x),n-m,(\lambda',x'))\in G(Z,V)$ and $\lambda'x'=\lambda'\beta'x(n,\infty)=\beta\gamma x(n,\infty)=\beta z=y$.
To prove the injectivity let $((\lambda,x),n-m,(\lambda',x')), ((\lambda,x),p-q,(\lambda'',x'')) \in G(Z,V)$ such that $n-m=p-q$, $\lambda'x'=\lambda''x''$. As for $G(Z,T)$ we have $V_{k}(\lambda',x')=V_{k}(\lambda'',x'')$ where $k=m+n\vee p -n=q+n\vee p -p$. From the definition of $V$ one has the factorizations$\lambda'\alpha=\alpha'\gamma$,$\lambda''\beta=\beta'\gamma$, $x'=\alpha y$, $x''=\beta y$ where $\sigma(\alpha)=\sigma(\alpha')=\sigma(\beta)=\sigma(\beta')=k$. Then we have $\alpha'\gamma y=\beta'\gamma y$, hence $\alpha'=\beta'$ and '$x'=x''$. We also have $\lambda'\alpha=\lambda''\beta=\alpha'\gamma$ so $\lambda'=\lambda''$.

We show now that lemma holds for $\phi$, $G_{1}=G(Z,(T,V))$ and $G_{2}=G(\overline{\mathbb{N}}^{n}\times \Lambda^{\infty},S\times W)$.
First we show the injectivity. Let $(\lambda,x),(\lambda',x'),(\lambda'',x'')\in Z$, $n,m,n',m',p'q,p',q'\in \mathbb{N}^{r}$ such that $V_{n}(T_{m}(\lambda,x))=V_{n'}(T_{m'}(\lambda',x'))$, 
$V_{p}(T_{q}(\lambda,x))=V_{p'}(T_{q'}(\lambda'',x''))$, $n-n'=p-p'$, $m-m'=q-q'$, $\sigma(\lambda')=\sigma(\lambda'')$, $\lambda'x'=\lambda''x''$.
As before we have $V_{n}(T_{m\vee q}(\lambda,x))=V_{n'}(T_{m'+m\vee q-m}(\lambda',x'))$ and $V_{p}(T_{m\vee q}(\lambda,x))=V_{p'}(T_{q'+m\vee q-q}(\lambda',x'))$ so 
$V_{n}(T_{m\vee q}(\lambda,x))=V_{n'}(T_{k}(\lambda',x'))$ and $V_{p}(T_{m\vee q}(\lambda,x))=V_{p'}(T_{k}(\lambda'',x''))$ with $k=m'+m\vee q-m=q'+m\vee q-q$. As $\psi(T_{k}(\lambda,x))=\psi(\lambda,x)$ the injectivity result proven above for 
$\psi$, $G(Z,V)$ and $G(\Lambda^{\infty},\times W)$ shows that  
$T_{k}(\lambda',x')=T_{k}(\lambda'',x'')$. Now we apply the injectivity result for  $\sigma$, $G(Z,T)$ and $G(\overline{\mathbb{N}}^{n},S)$ to get $(\lambda',x')=(\lambda'',x'')$

To prove the surjectivity let $(\lambda,x)\in Z$ and $((\sigma(\lambda),\lambda x),(n-n',m-m'),(k,y))\in G(\overline{\mathbb{N}}^{n}\times \Lambda^{\infty},S\times W)$. We apply the surjectivity proven above for $\psi$, $G(Z,V)$ and $G(\Lambda^{\infty},\times W)$ in the point $T_{m}(\lambda,x)$. There is $(\alpha,\beta)\in Z$ such that $\alpha\beta=y$ and $V_{n}(T_{m}(\lambda,x))=V_{n'}(\alpha,\beta)$. It follows that $\sigma(\alpha)=\sigma(\lambda)-m=k-m'$. We define now $\lambda'=\alpha \beta(0,m')$ and $x'=\beta(m',\infty)$. One has $\sigma(\lambda')=k$, $\lambda'x'=y$ and $T_{m'}(\lambda',x')=(\alpha,\beta)$ so $V_{n}T_{m}(\lambda,x)=V_{n'}T_{m'}(\lambda',x')$.
\end{proof}

From Lemma \ref{tomor} we have an induced homomorphism $\tilde{\phi}:\mathcal{T}^{\otimes n}\otimes C^{*}(\Lambda)\to C^{*}(Z,(T,V))$. Proposition \ref{germs} gives a homomorphism $\tilde{\pi}:C^{*}(Z,(T,V))\to C^{*}(\textrm{Germ}(Z,(T,V)))$. But $\textrm{Germ}(Z,(T,V))$ is an open subgroupoid of $G\times\textrm{Germ}(\Lambda^{\infty},W)$. The MGDS $(\Lambda^{\infty},W)$ is essentially free so by Proposition \ref{thesame} we have a map from $C^{*}(\textrm{Germ}(Z,(T,V)))$ to $\mathcal{E}\otimes C^{*}(\Lambda)$. Composing these homomorphisms we get the map from $\mathcal{T}^{\otimes n}\otimes C^{*}(\Lambda)$ to $\mathcal{E}\otimes C^{*}(\Lambda)$. Schematically we view this in terms of groupoids by the following diagram:
$$
\xymatrix
{
G(Z,(T,V))\ar[d]^{\pi}\stackrel{\hat{\phi}}{\longrightarrow}&G(\overline{\mathbb{N}}^{r}\times \Lambda^{\infty}, S\times W)\\
\textrm{Germ}(Z,(T,V))\stackrel{\subset}{\textrm{open}}&G \times \textrm{Germ}(\Lambda^{\infty}),W
}
$$
$\phi$ reverses and $\pi$ preserves the arrow when passing to the corresponding algebras. The bisection that defines $S_{i}$ is the set $A_i=\{((n-e_{i},x),(-e_{i},0),(n,x)): n\in \overline{\mathbb{N}}^{r}, n\geq e_i,x\in\Lambda^{\infty}\}$ and
$$\hat{\phi}^{-1}(A_i)=\bigcup_{\sigma(\lambda)=e_i}\{((x\lambda,y),(-e_{i},0),(x,\lambda y)):(x\lambda,y)\in Z\}$$
We have $W_{i}=\tilde{\pi}(1_{\hat{\phi}^{-1}(A_i)})\in C_{r}^{*}(G(Z,(T,V)))$ so $\tilde{\phi}(S_i\otimes 1)=W_i$. Similarly the bisection that defines $s_{\lambda}$ is the set $B_{\lambda}=\{((n,\lambda x),(0,-\sigma(\lambda)),(n,x)):n\in \overline{\mathbb{N}}^{r}, x\in \Lambda^{\infty},t(x)=s(\lambda)\}$. 
$$\hat{\phi}^{-1}(B_{\lambda})=\{((x'\lambda,y'),(0,-\sigma(\lambda)),(x,y)):(x\lambda,y)\in Z,$$
$$\;\;\;\;\;\;\;\;\;\;\;\;\;\;\;\;\;\;\;\;\;\lambda x=x'\mu,y'=\mu y, \sigma(\mu)=\sigma(\lambda)\}$$
Passing to germs with $\tilde{\pi}$, we get $\tilde{\pi}(1_{\hat{\phi}^{-1}(B_{\lambda})})=V_{\lambda}$.
$\phi$ restricted to the set $\{\Omega\}\times \Lambda^{\infty}$ gives an identification between $G(Z,(T,V))\vert_{\{\Omega\}\times \Lambda^{\infty}}$ and $G(\overline{\mathbb{N}}^{r}\times \Lambda^{\infty}, S\times W)\vert_{\{0\}\times \Lambda^{\infty}}$ so an isomorphism between full corners.

\section{Conclusions}
As a conclusion we want to draw the attention to the results in \cite{em1,em2}. Our groupoid approach to the duality between higher rank graph algebras can lead to a notion of higher rank hyperbolic groups. The higher rank version of a tree is a Euclidean building so the $\tilde{A}_{n}$-groups in \cite{cmsz} have to fall into this class. One can then think of the one-sided and two-sided Toeplitz algebras as subalgebras of $\mathbb{L}(l^{2}(\Gamma))$, where $\Gamma$ is such a group. For example the left-sided Toeplitz algebra associated to the free group $\mathbb{F}_{n}$ is $C(\overline{\mathbb{F}_{n}})\rtimes \mathbb{F}_{n}$ where $\overline{\mathbb{F}_{n}}$ is a compactification of $\mathbb{F}_{n}$ using the left-sided distance $d_{l}(\alpha,\beta)=l(\alpha^{-1}\beta)$, $l$ the word length. Then the right-sided Toeplitz algebras is given by the same algebra $C(\partial \overline{\mathbb{F}_{n}})\rtimes \mathbb{F}_{n}$ but the compactification is given by the right-sided distance $d_{r}(\alpha,\beta)=l(\alpha\beta^{-1})$. The two-sided Toeplitz algebra is the algebra generated by these two in their canonical representation on $\mathbb{L}(l^{2}(\mathbb{F}_{n}))$.


\begin{thebibliography}{MMM}
\bibitem{ar}  Anantharaman-Delaroche, C.; Renault, J.: \emph{Amenable groupoids}, Gen\`eve:Enseignement math\'ematique, 2000.

\bibitem{azr}Arzumanian, V.; Renault, J.: \emph{Examples of pseudogroups and their $C\sp *$-algebras}, Operator algebras and quantum field theory (Rome, 1996),  93--104, Int. Press, Cambridge, MA, 1997. 

\bibitem{ber} Berberian, S.K.: \emph{Borel spaces},Functional analysis and its applications (Nice, 1986), 134-197,
ICPAM Lecture Notes, World Sci. Publishing, Singapore, 1988.

\bibitem{bla}
Blackadar, B.: \emph{$K$-theory for $C^{*}$-algebras}, 2nd edition, Cambridge, Cambridge University Press, 1998.

\bibitem{cmsz}
Cartwright, D.I.; Mantero, A.M.; Steger, T.; Zappa, A.:
\emph{Groups acting simply transitively on the vertices of a building of type $\tilde{A}_{2}$. I,II. The cases $q=2$ and $q=3$}, Geom. Dedicata  47  (1993),  no. 2, 167--223.

\bibitem{co}
Connes, A.: \emph{Noncommutative geometry}, Academic Press, 1994.

\bibitem{cohi} Connes, A.; Higson, N.: \emph{Déformations, morphismes asymptotiques et $K$-théorie bivariante}  C. R. Acad. Sci. Paris Sér. I Math.  311  (1990),  no. 2, 101--106

\bibitem{em1}
Emerson, H.: \emph{The Baum-Connes conjecture, noncommutative Poincar\'e duality, and the boundary of the free group}, Int. J. Math. Math. Sci., 38 (2003), 2425-2445.

\bibitem{em2}
Emerson, H.: \emph{Noncommutative Poincar\'e duality for boundary actions of hyperbolic groups},  J. Reine Angew. Math. 564  (2003), 1-33.

\bibitem{er} Exel, R.; Renault, J.: \emph{Semigroups of local homeomorphisms and interaction groups}, preprint

\bibitem{fmy}Farthing, C.; Muhly, P.; Yeend, T.: \emph{Higher-rank graph C*-algebras: an inverse semigroup and groupoid approach}  Semigroup Forum  71  (2005),  no. 2, 159--187.

\bibitem{had}
Hadfield, T.: \emph{$K$-homology of the CAR algebra}, math.OA/0112236.

\bibitem{ght} Guentner, E.; Higson, N.; Trout, J.: \emph{Equivariant E-Theory for $C^{*}$-algebras},
Mem. Amer. Math. Soc., 148 (2000), no. 703.

\bibitem{kapu} Kaminker, J.; Putnam, I.: \emph{K-theoretic duality for shifts of
finite type}, Commun. Math. Physics, 187 (1997), 509-522

\bibitem{ka}
Kasparov, G.G.: \emph{The operator K-functor and extensions of C*-algebras}, Izv. Akad. Nauk SSSR,
Ser. Math. 44 (1980), 571-636; Math. USSR Izvestija, 16 (1981), 513-572

\bibitem{kupa}
Kumjian, A.; Pask, D.: \emph{Higher rank graph $C^{*}$-algebras}, New York Journal of Mathematics, 6 (2000), 1-20.

\bibitem{mr}
Muhly, P.; Renault, J.: \emph{$C\sp{*} $-algebras of multivariable Wiener-Hopf operators}, Trans. Amer. Math. Soc.  274  (1982), no. 1, 1--44.

\bibitem{nica}
Nica, A.: \emph{Some remarks on the groupoid approach to Wiener-Hopf operators}, J. Operator Theory  18  (1987),  no. 1, 163--198.

\bibitem{pz}
Popescu, I.; Zacharias, J.: \emph{$E$-theoretic duality for higher rank graph algebras},
$K$-theory, 34 (2005) 265-282.

\bibitem{re}
Renault, J.: \emph{A groupoid approach to $C^{*}$-algebras},
Lecture Notes in Mathematics, 793, Springer, Berlin, 1980.

\bibitem{re2}
Renault, J.:\emph{Cuntz-like algebras}, 17th OT Conference Proceedings, 2000.

\bibitem{tu}
Tu, Jean-Louis: \emph{Non-Hausdorff groupoids, proper actions and $K$-theory},
Doc. Math. 9 (2004), 565--597 

\bibitem{ze}
Zekri, R.: \emph{A new description of Kasparov's Theory of $C^*$-algebra Extensions}, J. Funct. Anal., 84 (1989), 441-471.

\end{thebibliography}
\end{document}